\numberwithin{equation}{section} 
\numberwithin{figure}{section} 
\setlist[enumerate]{label=$\arabic*.$, ref=$\arabic*$}
\theoremstyle{plain}
\newtheorem{theoalph}{Theorem}
\newtheorem{theo}{Theorem}
\newtheorem{coro}{Corollary}
\newtheorem{prop}{Proposition}[section]
\newtheorem{lemm}[prop]{Lemma}
\newenvironment{custtheo}[1]
{\innercustomthm}
{\endinnercustomthm}
\theoremstyle{definition}
\newtheorem{defi}[prop]{Definition}
\theoremstyle{remark}
\newtheorem{exam}[prop]{Example}
\newtheoremstyle{citing}
{3pt}
{3pt}
{\itshape}
{}
{\bfseries}
{.}
{.5em}
{\thmnote{#3}}
\theoremstyle{citing}
\newtheorem*{generic}{}
\newcommand{\C}{\mathbb{C}}
\newcommand{\N}{\mathbb{N}}
\newcommand{\R}{\mathbb{R}}
\newcommand{\cA}{\mathcal{A}}
\newcommand{\cC}{\mathcal{C}}
\newcommand{\cG}{\mathcal{G}}
\newcommand{\cI}{\mathcal{I}}
\newcommand{\cK}{\mathcal{K}}
\newcommand{\cP}{\mathcal{P}}
\newcommand{\cS}{\mathcal{S}}
\newcommand{\cT}{\mathcal{T}}
\newcommand{\cU}{\mathcal{U}}
\newcommand{\cV}{\mathcal{V}}
\newcommand{\sM}{\mathscr{M}}
\newcommand{\sP}{\mathscr{P}}
\newcommand{\hC}{\widehat{C}}
\newcommand{\hvarphi}{\widehat{\varphi}}
\newcommand{\tC}{\widetilde{C}}
\newcommand{\tD}{\widetilde{D}}
\newcommand{\tK}{\widetilde{K}}
\newcommand{\tR}{\widetilde{R}}
\newcommand{\tbeta}{\widetilde{\beta}}
\newcommand{\tdelta}{\widetilde{\delta}}
\newcommand{\tkappa}{\widetilde{\kappa}}
\newcommand{\tvarphi}{\widetilde{\varphi}}
\newcommand{\tpsi}{\widetilde{\psi}}
\newcommand{\partn}[1]{{\smallskip \noindent \textbf{#1.}}}
\renewcommand{\:}{\colon}
\renewcommand{\=}{\coloneqq}
\newcommand{\dd}{\hspace{1pt}\operatorname{d}\hspace{-1pt}}
\newcommand{\0}{\mathbf{0}}
\DeclareMathOperator{\Ker}{Ker}
\DeclareMathOperator{\HD}{HD} 
\DeclareMathOperator{\supp}{supp} 
\renewcommand{\emph}[1]{\textsf{\textit{#1}}}
\DeclareMathOperator{\rec}{recc}
\let\il\int
\renewcommand{\int}{\text{ *** \emph{CHANGE command} *** }}
\newcommand{\ir}{\operatorname{int}}
\newcommand{\cl}{\operatorname{cl}}
\newcommand{\Hg}{C^{\gamma}(\R)}
\newcommand{\Ha}{C^{\alpha}(\R)}
\newcommand{\HgC}{C^{\gamma}(\C)}
\newcommand{\Ip}{\cI(\gamma)} 
\newcommand{\Sp}{\cS(\gamma)} 
\newcommand{\Spa}{\cS(\alpha)} 
\newcommand{\hSp}{\widehat{\cS}(\gamma)} 
\newcommand{\PT}[1]{\cP\cT(#1)} 
\newcommand{\PTg}{\PT{\gamma}} 
\newcommand{\PTa}{\PT{\alpha}} 
\newcommand{\Sc}{\cG(\gamma)} 
\newcommand{\Sca}{\cG(\alpha)} 
\newcommand{\Cs}{\cC(\gamma)} 
\DeclareMathOperator{\crit}{crit}
\newcommand{\ct}{\beta_{\crit, \gamma}} 
\newcommand{\cta}{\beta_{\crit, \alpha}} 
\newcommand{\mpf}{f}
\begin{document}

\title{Phase diagram for intermittent maps}
\author{Daniel Coronel}
\address{Facultad de Matem\'aticas, Pontificia Universidad Cat\'olica de Chile, Campus San Joaqu\'in, Avenida Vicu\~{n}a Mackenna 4860, 
Santiago, Chile.} 
\email{acoronel@uc.cl}
\author{Juan Rivera-Letelier}
\address{Department of Mathematics, University of Rochester. Hylan Building, Rochester, NY~14627, U.S.A.}
\email{riveraletelier@gmail.com}
\urladdr{\url{http://rivera-letelier.org/}}

\begin{abstract}
We explore the phase diagram for potentials in the space of \textsc{H{\"o}lder} continuous functions of a given exponent and for the dynamical system generated by a \textsc{Pomeau}--\textsc{Manneville}, or intermittent, map.
There is always a phase where the unique \textsc{Gibbs} state exhibits intermittent behavior.
It is the only phase for a specific range of values of the \textsc{H{\"o}lder} exponent.
For the remaining values of the \textsc{H{\"o}lder} exponent, a second phase with stationary behavior emerges.
In this case, a co-dimension~$1$ submanifold separates the intermittent and stationary phases.
It coincides with the set of potentials at which the pressure function fails to be real-analytic.
We also describe the relationship between the phase transition locus, (persistent) phase transitions in temperature, and ground states.
\end{abstract}
\date{\today}

\maketitle

%
%

\section{Introduction}
\label{s:introduction}

Inspired by phase diagrams in statistical mechanics and bifurcation diagrams in dynamical systems, we propose studying regions of potentials where the corresponding \textsc{Gibbs} states exhibit qualitatively distinct behaviors for a fixed dynamical system.
This paper focuses on one of the simplest cases of interest: The phase diagram for potentials in the space of \textsc{H{\"o}lder} continuous functions of a given exponent for the dynamical system generated by a \textsc{Pomeau}--\textsc{Manneville}, or intermittent, map.
The simplicity of this setting allows for a detailed analysis of the associated phase diagram.
There is always a phase where the unique \textsc{Gibbs} state exhibits intermittent behavior in the sense of \textsc{Pomeau} and \textsc{Manneville} \cite{PomMan80}.
It is the only phase for a specific range of values of the \textsc{H{\"o}lder} exponent.
For the remaining values of the \textsc{H{\"o}lder} exponent, a second phase of stationary behavior emerges.
In this case, a co-dimension~$1$ (topological) submanifold separates the intermittent and stationary phases.
It coincides with the set of potentials at which the pressure function fails to be real-analytic.
We also describe the relationship between the phase transition locus, (persistent) phase transitions in temperature, and ground states.

To state our results more precisely, we introduce some notation and terminology.
For concreteness, we fix throughout the rest of the paper~$\alpha$ in~$(0, +\infty)$ and the \emph{\textsc{Pomeau}--\textsc{Manneville}} or \emph{intermittent} map ${f \: [0, 1] \to [0, 1]}$, defined by
\begin{equation}
  \label{eq:1}
  f(x)
  \=
  \begin{cases}
    x(1 + x^{\alpha})
    & \text{if~$x(1 + x^\alpha) \le 1$;}
    \\
    x(1 + x^\alpha) - 1 & \text{otherwise}.
  \end{cases}
\end{equation}
It is a prototypical example of an interval map with a neutral periodic point.
It has been extensively studied in smooth ergodic theory, see, for example, \cite{BomCar23,CasVar13,ChaColRedVer09,Hu04,GarIno22,GarIno24,Gou04a,Klo19b,Klo20,LiRiv14a,LivSauVai99,Lop93,MelTer12,Pia80,PomMan80,PreSla92,PolWei99,PreSla92,Tha80,Sar01a,Sar02,You99b}.
Our arguments apply with minor changes to variants of this map, like those studied in~\cite{You99b,LivSauVai99}.
For expository purposes, the map~$f$ given by~\eqref{eq:1} has the advantage that its geometric potential~$- \log Df$ is \textsc{H{\"o}lder} continuous and therefore belongs to (some of) the \textsc{Banach} spaces of potentials considered here.
However, we do not use this fact in our arguments.
It is also possible to extend our results to maps with several neutral periodic points, as those considered by \textsc{Thaler} \cite{Tha80} and by \textsc{Liverani}, \textsc{Saussol}, and \textsc{Vaienti} \cite[\S5]{LivSauVai99}.

Denote by~$C(\R)$ the space of continuous real-valued functions defined on~$[0, 1]$ endowed with the uniform norm~$\| \cdot \|$.
For each~$\gamma$ in~$(0, 1]$, denote by~$\Hg$ the space of those functions in~$C(\R)$ that are \textsc{H{\"o}lder} continuous of exponent~$\gamma$.
For each~$\varphi$ in~$\Hg$, put
\begin{equation}
  \label{eq:2}
  | \varphi |_{\gamma}
  \=
  \sup \left\{ \frac{|\varphi(x) - \varphi(y)|}{|x - y|^{\gamma}} \: x, y \in [0, 1] \text{ distinct} \right\}
  \text{ and }
  \| \varphi \|_{\gamma}
  \=
  \| \varphi \| + | \varphi |_{\gamma}.
\end{equation}

Denote by~$\sM$ the space of \textsc{Borel} probability measures on~$[0, 1]$ that are invariant by~$f$.
For each~$\nu$ in~$\sM$, denote by~$h_\nu$ the measure-theoretic entropy of~$\nu$.
For each~$\varphi$ in~$C(\R)$, the \emph{pressure~$P(\varphi)$ of~$f$ for the potential~$\varphi$} is defined by
\begin{equation}
  \label{eq:3}
  P(\varphi)
  \=
  \sup \left\{ h_\nu + \il \varphi \dd \nu \: \nu \in \sM \right\}.
\end{equation}
A measure at which this supremum is attained is a \emph{\textsc{Gibbs}} or \emph{equilibrium state of~$f$ for the potential~$\varphi$}.
There is at least~1 such measure (Lemma~\ref{l:measurable-soundness}(1)).
The pressure function ${P \: C(\R) \to \R}$ so defined is convex and \textsc{Lipschitz} continuous.

A measure~$\nu$ in~$\sM$ is \emph{exponentially mixing for~$f$}, if there are~$C$ in~$(0, +\infty)$ and~$\rho$ in~$(0, 1)$ such that for every measurable bounded function~$\psi \: [0, 1] \to \R$, every \textsc{Lipschitz} continuous function ${\tpsi \: [0, 1] \to \R}$, and every~$n$ in~$\N$, we have
\begin{equation}
  \label{eq:4}
  \left| \il \psi \circ f^n \cdot \tpsi \dd \nu - \il \psi \dd \nu \il \tpsi \dd \nu \right|
  \le
  C \| \psi \| \cdot | \tpsi |_1 \rho^n.
\end{equation}

\subsection{Intermittent and stationary phases}
\label{ss:phase-diagram}

Denote by~$\0$ the function defined on~$[0, 1]$ that vanishes identically.
Together with its unique \textsc{Gibbs} state for the potential~$\0$, the map~$f$ is measurably isomorphic to a \textsc{Bernoulli} process (Lemma~\ref{l:measurable-soundness}(2)).
Due to the neutral fixed point at~$0$, typical orbits exhibit intermittent behavior in the sense of \textsc{Pomeau} and \textsc{Manneville} \cite{PomMan80}: They alternate irregularly between time intervals spent near~$0$ and random-looking excursions away from it.
This intermittent behavior persists for \textsc{H{\"o}lder} continuous potentials near~$\0$.
More precisely, for such a potential~$\varphi$, there is a \textsc{Gibbs} state of~$f$ that is exponentially mixing and has strictly positive measure-theoretic entropy.
See~\cite[Corollary~A.4]{0InoRiv25} and~\cite[Theorems~1.2 and~3.3]{Klo19b} for \textsc{Kloeckner}'s effective estimate of the size of this neighborhood of~$\0$ for a map similar to~$f$.
These strong stochastic properties of the \textsc{Gibbs} state guarantee that typical orbits with respect to this measure also exhibit intermittent behavior.
This motivates the following definition.

\begin{defi}
  \label{d:intermittent-phase}
  Let~$\gamma$ in~$(0, 1]$ be given.
  The \emph{intermittent phase~$\Ip$ of~$f$ in~$\Hg$}, is the set of potentials in~$\Hg$ for which there is an exponentially mixing \textsc{Gibbs} state of~$f$ of strictly positive measure-theoretic entropy.
\end{defi}

For every~$\gamma$ in~$(0, 1]$ the intermittent phase~$\Ip$ is nonempty, because it contains (a neighborhood of)~$\0$.
Moreover, the set~$\Ip$ is open, the pressure function~$P$ is real-analytic on~$\Ip$, and for every~$\varphi$ in~$\Ip$ there is a unique \textsc{Gibbs} state of~$f$ for the potential~$\varphi$ \cite[Theorem~A.2]{0InoRiv25}.
See~\S\ref{ss:analiticity} for the notion of real-analyticity used in this paper.

In the case where~$\alpha < 1$ and ${\gamma > \alpha}$, the intermittent phase~$\Ip$ is all of~$\Hg$ and the pressure function is real-analytic on all of~$\Hg$, see \cite{LiRiv14b, LiRiv14a} and, in the case where ${\varphi(0) = \varphi(1)}$, see also~\cite[Proposition~11]{GarIno24}, \textsc{Kloeckner}'s proof based on ideas from transport theory \cite[Theorem~A]{Klo20}, or the Key Lemma in~\S\ref{ss:lematta}.

In the remaining case where ${\gamma \le \alpha}$, the space~$\Hg$ contains the geometric potential~$-\log Df$.
It is the quintessential example of a potential for which every \textsc{Gibbs} state of~$f$ different from~$\delta_0$\footnote{Strictly speaking, $\delta_0$ is exponentially mixing for trivial reasons.} is at most subexponentially mixing \cite{Hu04,Gou04a,Sar02}.
That is, $-\log Df$ is outside the intermittent phase~$\Ip$ and therefore~$\Ip$ is strictly smaller than~$\Hg$.
There is in fact an open set of potentials in~$\Hg$ outside~$\Ip$.
For a concrete example, consider the function ${\omega_{\gamma} \: [0, 1] \to \R}$ given by ${\omega_{\gamma}(x) \= -x^{\gamma}}$.
For every sufficiently large constant~$\beta$ and every potential~$\varphi$ in a neighborhood of~$\beta \omega_\gamma$ in~$\Hg$, the measure~$\delta_0$ is the unique \textsc{Gibbs} state of~$f$ for the potential~$\varphi$ (Proposition~\ref{p:hook}(2)).
Such a potential is thus outside~$\Ip$ and for it the system has a stationary behavior localized at the neutral fixed point~$0$.
This motivates the following definition.

\begin{defi}
  \label{d:stationary-phase}
  Let~$\gamma$ in~$(0, 1]$ be given.
  The \emph{stationary phase~$\Sp$ of~$f$ in~$\Hg$} is the subset of~$\Hg$ defined by
  \begin{equation}
    \label{eq:5}
    \Sp
    \=
    \ir(\{ \varphi \in \Hg \: \delta_0 \text{ is the unique \textsc{Gibbs} state of~$f$ for the potential } \varphi \}).
  \end{equation}
\end{defi}

By definition, the stationary phase~$\Sp$ is open and disjoint from~$\Ip$.
In the case where~$\gamma \le \alpha$, the stationary phase~$\Sp$ is nonempty and in fact unbounded because for some~$\beta_0$ in~$(0, +\infty)$ it contains~$(\beta \omega_\gamma)_{c \in (\beta_0, +\infty)}$.
We also show that in this case, $\Ip$ is star-convex at~$\0$ and that~$\Sp$ is convex (Proposition~\ref{p:phases}(1, 2)).
In particular, $\Ip$ and~$\Sp$ are both connected.

\subsection{Phase transition locus}

\label{ss:pt-locus-structure}

Fix~$\gamma$ in~$(0, \min \{ \alpha, 1 \}]$, so that the stationary phase~$\Sp$ is nonempty.
We show that the union of intermittent phase~$\Ip$ and the stationary phase~$\Sp$ is dense in~$\Hg$ and that the boundaries of these sets coincide (Proposition~\ref{p:phases}(3)).
So, the following definition arises naturally.

\begin{defi}
  \label{d:pt-locus}
  Let~$\gamma$ in~$(0, \min \{ \alpha, 1 \}]$ be given.
  The \emph{phase transition locus~$\PTg$ of~$f$ in~$\Hg$}, is the common boundary of the intermittent~$\Ip$ and stationary~$\Sp$ phases.
\end{defi}

The phase transition locus~$\PTg$ is thus closed, has empty interior, and satisfies
\begin{equation}
  \label{eq:6}
  \PTg
  =
  \Hg \setminus (\Ip \cup \Sp).
\end{equation}

Our first main result characterizes the phase transition locus~$\PTg$ as the singular set of the pressure function~$P$.

\begin{theoalph}
  \label{t:pt-locus}
  For every~$\gamma$ in~$(0, 1]$, the phase transition locus~$\PTg$ coincides with the set of potentials at which the pressure function~$P$ fails to be real-analytic.
\end{theoalph}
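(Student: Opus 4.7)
The plan is to establish two statements: first, that the pressure function $P$ is real-analytic on the open set $\Ip \cup \Sp$; second, that $P$ fails to be real-analytic at every point of the phase transition locus $\PTg$. Together with the identity $\Hg = \Ip \sqcup \Sp \sqcup \PTg$ (equation~\eqref{eq:6}, valid in the non-trivial range $\gamma \le \alpha$), this yields the theorem. The case $\gamma > \alpha$ (which forces $\alpha < 1$) is vacuous: the cited results of \cite{LiRiv14b,LiRiv14a} give $\Ip = \Hg$, so $P$ is real-analytic everywhere and the phase transition locus must be interpreted as empty.

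For the real-analyticity on $\Ip$, I simply invoke \cite[Theorem~A.2]{0InoRiv25}, already cited in the paragraph following Definition~\ref{d:intermittent-phase}. For the real-analyticity on $\Sp$, observe that by definition $\delta_0$ is the unique \textsc{Gibbs} state for each $\varphi \in \Sp$. Since $h_{\delta_0} = 0$, the variational principle~\eqref{eq:3} yields $P(\varphi) = \il \varphi \dd \delta_0 = \varphi(0)$, and the evaluation functional $\varphi \mapsto \varphi(0)$ is continuous and linear on $\Hg$, hence real-analytic.

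For the failure of real-analyticity on $\PTg$, I argue by contradiction. Fix $\varphi_0 \in \PTg$ and suppose $P$ is real-analytic on some connected open neighborhood $U$ of $\varphi_0$. Because $\PTg$ is the common boundary of the open sets $\Ip$ and $\Sp$ (Proposition~\ref{p:phases}(3)), both $U \cap \Ip$ and $U \cap \Sp$ are nonempty and open. On $U \cap \Sp$ the function $P$ agrees with the real-analytic linear functional $\varphi \mapsto \varphi(0)$, so by the identity principle for real-analytic functions on connected open subsets of a \textsc{Banach} space, $P(\varphi) = \varphi(0)$ throughout $U$. Picking any $\varphi \in U \cap \Ip$ and a \textsc{Gibbs} state $\nu$ for $\varphi$ with $h_\nu > 0$, we get $P(\varphi) = h_\nu + \il \varphi \dd \nu$, while the previous identity reads $P(\varphi) = h_{\delta_0} + \il \varphi \dd \delta_0$, exhibiting two distinct \textsc{Gibbs} states $\nu \neq \delta_0$ for $\varphi$. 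This contradicts the uniqueness of the \textsc{Gibbs} state on~$\Ip$ from \cite[Theorem~A.2]{0InoRiv25}.

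The main obstacle is securing the identity principle in the precise sense of real-analyticity fixed in the paper (to be specified in \S\ref{ss:analiticity}). For the standard notion, in which real-analyticity on $U$ means local representability by a norm-convergent power series of bounded symmetric multilinear forms, this reduces to the classical one-variable identity theorem by restricting $P(\varphi) - \varphi(0)$ to affine lines contained in $U$, and propagating the conclusion across $U$ by a standard chain-of-balls argument using connectedness. No other step is substantive: the analyticity on $\Ip$ is cited, the analyticity on $\Sp$ is immediate from the variational principle, and the strict inequality $P(\varphi) > \varphi(0)$ that yields the contradiction on $\Ip$ is forced precisely by the uniqueness of the \textsc{Gibbs} state there.
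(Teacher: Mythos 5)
Your proof is correct, but it takes a genuinely different route from the paper's. You argue the failure of real-analyticity at $\varphi_0 \in \PTg$ globally: if $P$ were real-analytic on a connected neighborhood $U$ of $\varphi_0$, then since $P = \varphi(0)$ on the nonempty open set $U \cap \Sp$, an identity principle for holomorphic functions on Banach spaces would force $P(\varphi) = \varphi(0)$ on all of $U$, contradicting $P > \varphi(0)$ on the nonempty open set $U \cap \Ip$ (or, as you phrase it, contradicting the uniqueness of the \textsc{Gibbs} state there). The paper instead reduces to a one-parameter problem from the outset: it restricts $P$ along the real-analytic line $\tau \mapsto \varphi_0 + \tau \omega_\gamma$, invokes Lemma~\ref{l:pt-locus} to locate the transition at $\tau_0 = 0$, and observes (Lemma~\ref{l:singular-family}) that the function $\tau \mapsto P(\varphi_\tau) - \varphi_\tau(0)$ is strictly positive on one side of $\tau_0$ and identically zero on the other, hence not real-analytic at $\tau_0$; since the paper's definition of real-analyticity is precisely composability with real-analytic one-parameter families, this immediately yields the failure at $\varphi_0$. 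The paper's route needs only the classical one-variable identity theorem, applied once, and meshes exactly with the way real-analyticity is formulated in \S\ref{ss:analiticity}. Your route is also valid — the Gâteaux-style notion of holomorphicity adopted in \S\ref{ss:analiticity} does support the Banach-space identity principle, by the very line-restriction-and-chain-of-balls argument you sketch — but you correctly flag that this piece carries the substantive burden and you do not fully carry it out; in that sense the paper's reduction via the explicit curve $\varphi_0 + \tau\omega_\gamma$ is the more economical and self-contained choice. A minor simplification to your closing step: rather than exhibiting two distinct \textsc{Gibbs} states, it is enough to quote the equality $\Ip = \{\varphi : P(\varphi) > \varphi(0)\}$ from Theorem~\ref{t:intermittent-potentials}, which contradicts $P(\varphi) = \varphi(0)$ on $U \cap \Ip$ directly.
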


This result has natural applications to 1-parameter families of potentials.
Let~$I$ be an open interval of~$\R$ and let~$(\varphi_\tau)_{\tau \in I}$ be a real-analytic family of potentials in~$\Hg$.
Theorem~\ref{t:pt-locus} implies that for every parameter~$\tau_0$ at which the function ${\tau \mapsto P(\varphi_\tau)}$ fails to be real-analytic, the potential~$\varphi_{\tau_0}$ must be in~$\PTg$.
In other words, every phase transition parameter for the pressure function occurs at a potential in the phase transition locus.
In the opposite direction, we show that if~$\tau_0$ is a parameter such that~$\varphi_\tau$ is in~$\Ip$ if ${\tau < \tau_0}$ and outside~$\Ip$ if ${\tau \ge \tau_0}$, then the function ${\tau \mapsto P(\varphi_\tau)}$ fails to be real-analytic at~$\tau_0$ (Lemma~\ref{l:singular-family}).
Together with the fact that the intermittent phase~$\Ip$ is star-convex at~$\0$ (Proposition~\ref{p:phases}(1)) and equal to ${\{\varphi \in \Hg \: P(\varphi) > \varphi(0) \}}$ \cite[Theorem~A.2]{0InoRiv25}, this implies the following corollary as an immediate consequence.

\begin{coro}[Phase transitions in temperature]
  \label{c:pt-in-temperature}
  For every \textsc{H{\"o}lder} continuous potential~$\varphi$, precisely one of the following properties holds.
  \begin{enumerate}
  \item
    The family~$(\beta \varphi)_{\beta \in (0, +\infty)}$ is contained in~$\Ip$ and the function ${\beta \mapsto P(\beta \varphi)}$ is real-analytic and strictly larger than ${\beta \mapsto \beta \varphi(0)}$ on~$(0, +\infty)$.
  \item
    There is~$\beta_*$ in~$(0, +\infty)$ such that for every~$\beta$ in~$(0, +\infty)$, the potential~$\beta \varphi$ is in~$\Ip$ if~$\beta$ is in~$(0, \beta_*)$ and outside~$\Ip$ if~$\beta$ is in~$[\beta_*, +\infty)$.
    Thus, $\beta_* \varphi$ is in~$\PTg$ and the function ${\beta \mapsto P(\beta \varphi)}$ is real-analytic and strictly larger than ${\beta \mapsto \beta \varphi(0)}$ on~$(0, \beta_*)$, it coincides with ${\beta \mapsto \beta \varphi(0)}$ on~$[\beta_*, +\infty)$, and it only fails to be real-analytic on~$(0, +\infty)$ at~$\beta_*$.
  \end{enumerate}
\end{coro}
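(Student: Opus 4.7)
The plan is to reduce the corollary to three ingredients already in place: the star-convexity of $\Ip$ at $\0$ (Proposition~\ref{p:phases}(1)), the identity $\Ip = \{\psi \in \Hg \: P(\psi) > \psi(0)\}$ from \cite[Theorem~A.2]{0InoRiv25}, and Lemma~\ref{l:singular-family}. I would also use the elementary fact that $\delta_0$ lies in $\sM$ and has zero entropy, so that $P(\psi) \ge \psi(0)$ for every continuous $\psi$.

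The main step is to introduce
\[
\beta_* \= \sup\{\beta \in (0, +\infty) \: \beta\varphi \in \Ip\}
\]
and split into the cases $\beta_* = +\infty$ and $\beta_* < +\infty$, which I claim yield properties~(1) and~(2) respectively. Since $\Ip$ is open and contains a neighborhood of $\0$, we have $\beta_* > 0$, so the two cases are exhaustive. If $\beta_* = +\infty$, the ray $\{\beta\varphi \: \beta > 0\}$ lies in $\Ip$; composing the analyticity of $P$ on $\Ip$ with the linear curve $\beta \mapsto \beta\varphi$ gives analyticity of $\beta \mapsto P(\beta\varphi)$, and the strict inequality $P(\beta\varphi) > \beta\varphi(0)$ is the characterization of $\Ip$. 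This establishes~(1).

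For $\beta_* < +\infty$, for $\beta \in (0, \beta_*)$ I would pick $\beta_1 \in [\beta, \beta_*)$ with $\beta_1\varphi \in \Ip$ and apply star-convexity at $\0$ with scalar $\beta/\beta_1 \in [0, 1]$ to place $\beta\varphi$ in $\Ip$. Openness of $\Ip$ rules out $\beta_*\varphi \in \Ip$, since this would force $(\beta_* + \varepsilon)\varphi \in \Ip$ for small $\varepsilon > 0$, contradicting the supremum; hence $\beta\varphi \notin \Ip$ for all $\beta \in [\beta_*, +\infty)$. Because $\Ip$ and $\Sp$ are open and disjoint and $\beta_*\varphi$ is a limit point of $\Ip$, it cannot lie in $\Sp$, and~\eqref{eq:6} puts it in $\PTg$. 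On $(0, \beta_*)$, analyticity and the strict inequality follow as in the previous case. On $[\beta_*, +\infty)$, $\beta\varphi \notin \Ip$ combined with $P(\beta\varphi) \ge \beta\varphi(0)$ and the characterization of $\Ip$ forces $P(\beta\varphi) = \beta\varphi(0)$, which is linear in $\beta$ and hence real-analytic on $(\beta_*, +\infty)$. The failure at $\beta_*$ then follows from Lemma~\ref{l:singular-family} applied to the real-analytic family $\tau \mapsto \tau\varphi$ at $\tau_0 = \beta_*$.

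The argument is mostly structural once the three cited ingredients are available; the only step that requires genuine care is locating $\beta_*\varphi$ in $\PTg$ rather than merely in $\overline{\Ip} \setminus \Ip$. This refinement rests on Proposition~\ref{p:phases}(3) and~\eqref{eq:6}, together with the joint openness and disjointness of $\Ip$ and $\Sp$. I expect no further obstacle, which is consistent with the author's description of the corollary as an immediate consequence.
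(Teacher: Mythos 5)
Your proposal is correct and uses exactly the three ingredients the paper identifies before stating the corollary: the characterization $\Ip = \{\psi : P(\psi) > \psi(0)\}$, star-convexity of $\Ip$ at $\0$, and Lemma~\ref{l:singular-family}. The dichotomy via $\beta_* \= \sup\{\beta > 0 : \beta\varphi \in \Ip\}$ and the placement of $\beta_*\varphi$ in $\PTg$ via the disjointness and openness of $\Ip$ and $\Sp$ together with~\eqref{eq:6} is the intended argument; the only blemish is the misstated interval $[\beta, \beta_*)$ for choosing $\beta_1$ (it should be $(\beta, \beta_*]$, which is what the supremum yields), and it would be worth noting explicitly that case~2 forces $\gamma \le \min\{\alpha, 1\}$ so that $\PTg$ and Lemma~\ref{l:singular-family} apply.
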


In the case where the latter property in Corollary~\ref{c:pt-in-temperature} holds, the potential~$\varphi$ undergoes a \emph{phase transition in temperature} and~$\beta_*$ is the \emph{phase transition parameter of~$\varphi$}.
This terminology comes from statistical mechanics, where~$\beta$ is interpreted as the inverse temperature.
Thus, a \textsc{H{\"o}lder} continuous potential~$\varphi$ undergoes a phase transition in temperature if and only if the function ${\beta \mapsto P(\beta \varphi)}$ fails to be real-analytic on~$(0, +\infty)$.
The corollary above implies that these equivalent conditions hold if and only if at low temperatures~$\delta_0$ is a \textsc{Gibbs} state of~$f$ for~$\varphi$.

The geometric potential~$-\log Df$ is a well-known example of a potential having a phase transition in temperature.
See, for example, Proposition~\ref{p:geometric-potential}, \cite{PreSla92}, \cite[Proposition~4.2]{0CorRiv2504a}, as well as \cite[Theorem~B]{BomCar23} for an analogous result for local diffeomorphisms of the circle.
For potentials that have a precise asymptotic behavior near~$0$ and are sufficiently regular, phase transitions in temperature were studied by \textsc{Sarig} \cite[Proposition~1]{Sar01a} and in the companion paper~\cite{0CorRiv2504a}.

Our next result describes fundamental properties of the phase transition locus.

\begin{theoalph}
  \label{t:pt-locus-structure}
  For every~$\gamma$ in~$(0, \min \{ \alpha, 1 \}]$, the phase transition locus~$\PTg$ is a topological submanifold of~$\Hg$ of co-dimension~1.
  Furthermore, this submanifold is homeomorphic to a vector subspace of~$\Hg$ of co-dimension~1, but it is not an affine subspace of~$\Hg$.
\end{theoalph}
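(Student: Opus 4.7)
The plan is to realize $\PTg$ as the topological boundary of a closed convex body in the infinite-dimensional Banach space~$\Hg$, and then invoke the Klee--Bessaga classification of convex bodies in infinite-dimensional Banach spaces to obtain, in one stroke, both the codimension-$1$ topological submanifold structure and the homeomorphism to a codimension-$1$ vector subspace. The non-affineness is a separate contradiction argument exploiting the nonlinearity of the pressure.

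First, set $C \= \Hg \setminus \Ip$, which is closed as the complement of the open set $\Ip$. By Proposition~\ref{p:phases}(3) together with $\Ip \cap \Sp = \emptyset$, one has $C = \Sp \cup \PTg$, and $\PTg \subseteq \overline{\Sp}$ gives $C = \overline{\Sp}$. Thus $C$ is convex by Proposition~\ref{p:phases}(2); since any nonempty open convex subset of a Banach space equals the interior of its closure, $\Sp = \ir(C)$ and $\partial C = \PTg$. Proposition~\ref{p:hook}(2) gives $\beta \omega_\gamma \in \Sp$ for all $\beta$ sufficiently large, so $\ir(C) \ne \emptyset$, while $\0 \in \Ip$ gives $C \ne \Hg$. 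Hence $C$ is a closed, convex, proper subset of $\Hg$ with nonempty interior.

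Second, invoke the standard topological structure theorem for closed convex bodies in infinite-dimensional Banach spaces (due to Klee and Bessaga): there is an ambient homeomorphism $\Phi \: \Hg \to \Hg$ taking $C$ onto a closed half-space $\{\varphi \in \Hg \: L(\varphi) \ge 0\}$ for some nonzero continuous linear functional $L \: \Hg \to \R$. Then $\Phi$ sends $\PTg = \partial C$ onto the closed codimension-$1$ vector subspace $\ker L$, which proves both conclusions at once: $\PTg$ is a topological submanifold of $\Hg$ of codimension~$1$ (with global chart $\Phi$), and it is homeomorphic to the codimension-$1$ subspace $\ker L$.

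Third, I argue by contradiction that $\PTg$ is not an affine subspace. Suppose $\PTg = \{L = c\}$ for a nonzero continuous linear $L \: \Hg \to \R$ and $c \in \R$; then after possibly replacing $L$ by $-L$, $\overline{\Sp} = \{L \ge c\}$. The $\R\1$-invariance of $\Sp$ (since $P(\varphi + s\1) - (\varphi + s\1)(0) = P(\varphi) - \varphi(0)$) forces $L(\1) = 0$, and Proposition~\ref{p:hook}(2) forces $L(\omega_\gamma) > 0$. Normalize $L(\omega_\gamma) = 1$; then for every $\psi \in \Hg$ with $L(\psi) > 0$, Corollary~\ref{c:pt-in-temperature} forces its transition parameter to be $\beta_*(\psi) = c/L(\psi)$, so $\psi \mapsto 1/\beta_*(\psi)$ would coincide with the linear functional $L/c$ on the open cone $\{L > 0\}$. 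This is ruled out by exhibiting two directions $\psi_1, \psi_2$ in this cone such that $\beta_*^{-1}$ fails to be additive, equivalently two points of $\PTg$ whose affine line leaves $\overline{\Sp}$ and enters $\Ip$; the underlying reason is the strict convexity of the pressure on $\Ip$, reflected in the nontrivial variation of the one-sided limiting equilibrium states along $\PTg$. The main obstacle lies here: while the first two steps are essentially a repackaging of results already established in the paper together with a standard infinite-dimensional convex-geometry theorem, the non-affineness requires a concrete handle on the nonlinear dependence of $\beta_*(\psi)$ on~$\psi$.
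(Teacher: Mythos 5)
Your Step~1 is correct and matches the paper's setup: $C \= \Hg \setminus \Ip = \cl(\Sp) = \Sp \cup \PTg$ is a proper, closed, convex subset of~$\Hg$ with nonempty interior. The trouble begins in Step~2. The general theorem you invoke --- that every proper closed convex body with nonempty interior in an infinite-dimensional Banach space is \emph{ambient} homeomorphic to a closed half-space --- is false as stated. The slab $\{\varphi : -1 \le L(\varphi) \le 1\}$, for $L$ a nonzero continuous functional, is a proper closed convex body with nonempty interior whose boundary consists of two disjoint hyperplanes, hence is disconnected and certainly not homeomorphic to a codimension-$1$ subspace. To rule out slab-like behavior you must show that $C$ is, \emph{globally} and not just locally, the epigraph over a hyperplane of a finite, everywhere-defined convex function. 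That is exactly what the paper's Lemma~\ref{l:pt-locus} delivers: for each $\varphi_0$, the ray $\tau \mapsto \varphi_0 + \tau\omega_\gamma$ crosses from $\Ip$ through a single point of $\PTg$ into $\Sp$, so $C$ is the epigraph of $\tau(\cdot)$ over $\Ker(\ell)$ (Proposition~\ref{p:pt-locus-structure}(1)). This is the step your argument skips, and it is not recoverable by citation; it requires the dynamical input (Proposition~\ref{p:hook}(2), Theorem~\ref{t:intermittent-potentials}, Lemma~\ref{l:stationarization}) that establishes the ray structure. (A secondary worry: $\Hg$ is not separable, and the more refined Bessaga--Klee classifications are typically stated for separable Fr\'echet spaces, so even a corrected citation would need hypotheses checked.)

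Step~3 you explicitly acknowledge to be incomplete, and as planned it would be hard to finish: proving that $\beta_*^{-1}$ fails to be additive on the cone $\{L > 0\}$ requires exhibiting concrete failure of linearity, and "strict convexity of the pressure" would itself need to be extracted. The paper avoids this entirely with a much cleaner argument: any affine subspace contained in $\PTg \cup \Sp$ must have its direction space inside the lineality space of $\rec(\Sp)$, which by Proposition~\ref{p:receding-to-zero-temperature} equals $\Cs$ and has \emph{infinite} co-dimension in $\Hg$. Since a co-dimension-$1$ affine subspace has finite (indeed co-dimension $1$) direction space, $\PTg$ cannot be one. This is quantitatively stronger than mere non-affineness and requires none of the pressure-variation analysis you gesture at. You should replace your Step~3 with this lineality-space argument, and replace the Klee--Bessaga appeal in Step~2 with the explicit epigraph construction via the ray decomposition along $\omega_\gamma$.
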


In fact, we show that the phase transition locus is linear homeomorphic to the graph of a real-valued convex function and that every affine subspace contained in the phase transition locus is of infinite co-dimension (Proposition~\ref{p:pt-locus-structure}).

\subsection{On the persistence of phase transitions in temperature}
\label{ss:(non)persistent-pt}

Our next result is reminiscent of the \textsc{Peierls} condition for contour models, which ensures that ground states persist as \textsc{Gibbs} states at low temperatures \cite{Pei36,Sin82}.
In various situations of interest, like the \textsc{Ising} model on the square lattice~\cite{Ons44}, this leads to the existence of phase transitions \cite[Chapter~6]{Geo11}.
For intermittent maps and a \textsc{H{\"o}lder} continuous potential~$\varphi$, we show that the condition of~$\delta_0$ being robustly a ground state implies that~$\delta_0$ is also a \textsc{Gibbs} state a low temperatures.
As a consequence, such a potential~$\varphi$ undergoes a phase transition in temperature (Theorem~\ref{t:persistence-locus}).

To state this result more precisely, we introduce some terminology.
A \emph{ground state of~$f$ for a continuous potential~$\varphi$}, is a measure~$\nu$ in~$\sM$ satisfying
\begin{equation}
  \label{eq:7}
  \il \varphi \dd \nu
  =
  \sup_{\nu' \in \sM} \il \varphi \dd \nu'.
\end{equation}

Recall from~\S\ref{ss:pt-locus-structure} that for a \textsc{H{\"o}lder} continuous potential~$\varphi$ having a phase transition in temperature, the phase transition parameter of~$\varphi$ is the unique parameter in~$(0, +\infty)$ at which the function ${\beta \mapsto P(\beta \varphi)}$ fails to be real-analytic.

\begin{defi}
  \label{d:persistent-phase-transition}
  Let~$\gamma$ in~$(0, 1]$ be given.
  A potential~$\varphi$ in~$\Hg$ undergoes a \emph{persistent phase transition in temperature in~$\Hg$}, if it undergoes a phase transition in temperature and if every potential close to~$\varphi$ in~$\Hg$ undergoes a phase transition in temperature whose phase transition parameter is close to that of~$\varphi$.
\end{defi}

By definition, for a potential~$\varphi$ undergoing a persistent phase transition in temperature in~$\Hg$, the phase transition parameter is defined on a neighborhood of~$\varphi$ in~$\Hg$ and it is continuous at~$\varphi$.

\begin{theoalph}[From zero to low temperatures]
  \label{t:persistence-locus}
  For all~$\gamma$ in~$(0, 1]$ and~$\varphi$ in~$\Hg$, the following properties are equivalent.
  \begin{enumerate}
  \item
    The potential~$\varphi$ undergoes a persistent phase transition in temperature in~$\Hg$.
  \item
    The measure~$\delta_0$ is a ground state of~$f$ for every potential in~$\Hg$ close to~$\varphi$.
  \end{enumerate}
  When these equivalent conditions hold, $\delta_0$ is the unique ground state of~$f$ for the potential~$\varphi$.
\end{theoalph}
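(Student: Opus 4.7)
The plan is to prove the two implications separately and deduce uniqueness of~$\delta_0$ as a corollary. For the easy direction $(1) \Rightarrow (2)$, assume persistence. Every $\psi$ in an $\Hg$-neighborhood of~$\varphi$ undergoes a phase transition in temperature, so Corollary~\ref{c:pt-in-temperature}(2) gives $P(\beta\psi) = \beta \psi(0)$ for every $\beta$ past the corresponding phase transition parameter. Dividing by $\beta$ and sending $\beta \to +\infty$, the left-hand side converges to $\sup_{\nu \in \sM} \il \psi \dd \nu$ (a standard consequence of the variational principle and finite topological entropy), while the right-hand side is~$\psi(0)$. Hence $\psi(0) = \sup_{\nu \in \sM} \il \psi \dd \nu$ and~$\delta_0$ is a ground state of~$f$ for~$\psi$.

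The main direction is $(2) \Rightarrow (1)$. Suppose~$\delta_0$ is a ground state of~$f$ for every $\psi$ in an $\Hg$-ball $B(\varphi, r_0)$. The first step is to derive a \emph{Peierls-type lower bound}: for every $\psi \in B(\varphi, r_0/2)$ and every $\nu \in \sM$,
\begin{equation}
  \label{eq:plan-peierls}
  \psi(0) - \il \psi \dd \nu \ge \tfrac{r_0}{2} \|\nu - \delta_0\|_{\Hg^*},
\end{equation}
where $\|\cdot\|_{\Hg^*}$ denotes the dual norm on~$\Hg^*$. Applying the ground state hypothesis to $\psi + \eta$ gives $\il \eta \dd \nu - \eta(0) \le \psi(0) - \il \psi \dd \nu$; setting $\eta = \pm s g$ with $\|g\|_\gamma \le 1$ and $|s| \uparrow r_0/2$ yields~\eqref{eq:plan-peierls}.

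The crucial second step, which I expect to be the main obstacle, is a \emph{quantitative entropy estimate}: there is $K > 0$, depending only on~$f$ and~$\gamma$, such that $h_\nu \le K \|\nu - \delta_0\|_{\Hg^*}$ for every $\nu \in \sM$. My approach combines Ruelle's inequality $h_\nu \le \il \log Df \dd \nu$, the local expansion $\log Df(x) = (1+\alpha) x^\alpha + O(x^{2\alpha})$ near~$0$, and boundedness of $\log Df$ on $[x_0, 1]$ (with $x_0$ the turning point of~$f$), to obtain $h_\nu \lesssim \il x^\alpha \dd \nu + \nu([x_0, 1])$. The inequality $x^\alpha \le x^\gamma$ on $[0, 1]$ (valid for $\gamma \le \alpha$) together with $\nu([x_0, 1]) \le x_0^{-\gamma} \il x^\gamma \dd \nu$ reduces this to $h_\nu \lesssim \il x^\gamma \dd \nu$, and the comparability $\il x^\gamma \dd \nu \asymp \|\nu - \delta_0\|_{\Hg^*}$ (via $|g(x) - g(0)| \le |g|_\gamma x^\gamma$ and testing with $g(x) = x^\gamma$) completes the bound.

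Combining the two inequalities, for every $\beta \ge 2K/r_0$ and every $\psi \in B(\varphi, r_0/2)$,
\[
  P(\beta\psi) - \beta\psi(0)
  = \sup_{\nu \in \sM} \left( h_\nu - \beta \left(\psi(0) - \il \psi \dd \nu\right) \right)
  \le \sup_{\nu \in \sM} \left( K - \tfrac{\beta r_0}{2} \right) \|\nu - \delta_0\|_{\Hg^*}
  \le 0,
\]
and the converse $P(\beta\psi) \ge \beta\psi(0)$ (test with~$\delta_0$) forces $\beta\psi \notin \Ip$. Since~$\Ip$ is open and contains~$\0$, $\beta\psi \in \Ip$ for small $\beta > 0$; hence Corollary~\ref{c:pt-in-temperature} provides a phase transition parameter $\beta_*(\psi) \in (0, 2K/r_0]$ for every $\psi \in B(\varphi, r_0/2)$. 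Continuity of $\beta_*$ at~$\varphi$ follows from $\PTg$ being closed (Theorem~\ref{t:pt-locus}), from $\beta_*(\psi)\psi$ being the unique point of $\PTg$ on the ray $\{\beta\psi : \beta > 0\}$, and from the uniform bound $\beta_*(\psi) \le 2K/r_0$. Uniqueness of~$\delta_0$ as a ground state of~$\varphi$ follows by contradiction: any other ground state $\nu \ne \delta_0$ admits $\eta \in \Hg$ with $\eta(0) = 0$ and $\il \eta \dd \nu > 0$, and the potential $\varphi + s\eta$ (for small $s > 0$) then contradicts~$\delta_0$ being a ground state.
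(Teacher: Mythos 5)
Your route is genuinely different from the paper's: instead of the geometric machinery (the recession-cone identity $\rec(\Sp) = \Sc$ from Proposition~\ref{p:receding-to-zero-temperature}, the identity $\ir(\Sc) = \bigcup_{\beta>0} \beta\Sp$ from Proposition~\ref{p:zero-to-positive}, and Lemma~\ref{l:persistence-locus}), you extract a quantitative Peierls bound $\psi(0) - \il\psi\dd\nu \ge \tfrac{r_0}{2}\|\nu - \delta_0\|_{\Hg^*}$ directly from the hypothesis $\varphi \in \ir(\Sc)$ and pair it with an entropy estimate $h_\nu \le K\|\nu - \delta_0\|_{\Hg^*}$, yielding an explicit uniform bound $\beta_*(\psi) \le 2K/r_0$ on $B(\varphi, r_0/2)$. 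This is an attractive, more quantitative version of the implication. One remark on the entropy step: rather than invoking Ruelle's inequality for this piecewise map (which needs a little care at the branch discontinuity), it is cleaner to cite Proposition~\ref{p:hook}(2): for $\gamma \le \alpha$ there is $\beta_0$ with $P(\beta_0\omega_\gamma) = 0$, and the variational principle then gives $h_\nu \le \beta_0 \il x^\gamma \dd\nu$ for every $\nu$ in $\sM$, which is your bound up to the easy comparability $\il x^\gamma \dd\nu \asymp \|\nu - \delta_0\|_{\Hg^*}$.

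The genuine gap is in the continuity step. You assert that $\beta_*(\psi)\psi$ is the unique point of $\PTg$ on the ray $\{\beta\psi : \beta > 0\}$, but Corollary~\ref{c:pt-in-temperature} only says $\beta\psi$ leaves $\Ip$ at $\beta_*(\psi)$; it is silent on whether, for $\beta > \beta_*(\psi)$, the potential $\beta\psi$ lands in $\Sp$ or stays in $\PTg$, and the assertion is in fact false for a general potential: by Proposition~\ref{p:nonpersistent-pt}(2), for $\gamma < \alpha$ the ray through $-\log Df$ meets $\PTg$ at every $\beta \ge 1$. The fix is available from your own estimates: the Peierls-plus-entropy argument shows $\beta B(\varphi, r_0/2) \subseteq \Hg\setminus\Ip$ for every $\beta \ge 2K/r_0$, hence $\beta B(\varphi, r_0/2) \subseteq \ir(\Hg\setminus\Ip) = \Sp$; since $(2K/r_0)\varphi \in \Sp$, $\beta_*(\varphi)\varphi \in \partial\Sp$, and $\Sp$ is open and convex (Proposition~\ref{p:phases}), the open segment between them lies in $\Sp$, so the whole ray $\{\beta\varphi : \beta > \beta_*(\varphi)\}$ lies in $\Sp$ and meets $\PTg$ nowhere. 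This recovers the content of the paper's Lemma~\ref{l:persistence-locus}; but as written your proof of continuity of $\beta_*$ is incomplete, and this step must be made explicit.
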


The following corollary is a direct consequence of Theorem~\ref{t:persistence-locus}.
For each~$\gamma$ in~$(0, 1]$, put
\begin{equation}
  \label{eq:8}
  \Sc
  \=
  \{ \varphi \in \Hg \: \delta_0 \text{ is a ground state of~$f$ for the potential~$\varphi$} \}
\end{equation}
and note that~$\Sc$ is a closed convex cone.
Moreover, let ${\ct \: \Hg \to (0, +\infty)}$ be the function defined for~$\varphi$ in~$\Hg$ as follows.
If~$\varphi$ undergoes a phase transition in temperature, then~$\ct(\varphi)$ is its phase transition parameter.
Otherwise, ${\ct(\varphi) \= +\infty}$.
Note that by Corollary~\ref{c:pt-in-temperature} in~\S\ref{ss:pt-locus-structure} every potential in~$\Hg$ undergoing a phase transition in temperature is in~$\Sc$ and we have
\begin{equation}
  \label{eq:162}
  \ct^{-1}((1, +\infty])
  =
  \Ip
  \text{ and }
  \ct^{-1}(1)
  \subseteq
  \PTg.
\end{equation}
\begin{coro}
  \label{c:persitency-locus}
  For every~$\gamma$ in~$(0, 1]$, the function~$\ct$ is constant equal to~$+\infty$ outside~$\Sc$ and it is finite and continuous on~$\ir(\Sc)$.
\end{coro}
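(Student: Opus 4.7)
The corollary splits into two independent assertions: (a) $\ct \equiv +\infty$ on $\Hg \setminus \Sc$, and (b) $\ct$ is finite and continuous on $\ir(\Sc)$. My plan is to obtain (b) as an immediate consequence of Theorem~\ref{t:persistence-locus} and to prove (a) by a direct zero-temperature-limit argument combining Corollary~\ref{c:pt-in-temperature} with the variational principle~\eqref{eq:3}.

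For (a), I would argue by contraposition: assuming $\beta_* \= \ct(\varphi) < +\infty$, I show $\varphi \in \Sc$. By Corollary~\ref{c:pt-in-temperature} applied to $\varphi$, the pressure satisfies $P(\beta \varphi) = \beta \varphi(0)$ for every $\beta \ge \beta_*$. The variational principle then gives, for each $\nu$ in $\sM$ and each $\beta \ge \beta_*$,
$$
h_\nu + \beta \il \varphi \dd \nu
\le
\beta \varphi(0)
=
\beta \il \varphi \dd \delta_0.
$$
Dividing by $\beta$ and letting $\beta \to +\infty$ — using only that $h_\nu$ is finite (it is bounded by the topological entropy of $f$) — yields $\il \varphi \dd \nu \le \il \varphi \dd \delta_0$. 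Since $\nu$ was arbitrary, this says $\delta_0$ is a ground state of $f$ for $\varphi$, so $\varphi \in \Sc$.

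For (b), I would invoke the implication (2)$\Rightarrow$(1) of Theorem~\ref{t:persistence-locus}. Fix $\varphi \in \ir(\Sc)$. By definition of $\Sc$ and of the interior, $\delta_0$ is a ground state of $f$ for every potential in some neighborhood of $\varphi$ in $\Hg$, which is precisely condition~(2) of that theorem. Condition~(1) then holds, so $\varphi$ undergoes a persistent phase transition in temperature in~$\Hg$; by Definition~\ref{d:persistent-phase-transition} this immediately supplies both the finiteness of $\ct$ on a neighborhood of $\varphi$ and its continuity at $\varphi$. As $\varphi \in \ir(\Sc)$ was arbitrary, this establishes (b).

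I do not anticipate any substantial obstacle. Theorem~\ref{t:persistence-locus} carries the entire load of (b), while (a) is an instance of the standard principle that ground states arise as zero-temperature limits of equilibrium states, here applied to the particularly clean one-parameter structure provided by Corollary~\ref{c:pt-in-temperature}. The only mild care needed is to note that in the display above $h_\nu$ can be taken to depend on the \emph{fixed} measure $\nu$, so finiteness of $h_\nu$ (rather than a uniform bound) suffices for the limit as $\beta \to +\infty$.
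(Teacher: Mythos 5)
Your proposal is correct and essentially matches the paper's intended argument: part (b) is exactly the (2)$\Rightarrow$(1) implication of Theorem~\ref{t:persistence-locus}, while part (a) is the contrapositive of the observation (made in the text just before the corollary, via Corollary~\ref{c:pt-in-temperature}) that every potential undergoing a phase transition in temperature lies in $\Sc$. A minor simplification for (a): since $h_\nu \ge 0$, the single chain $\beta_*\il\varphi\dd\nu \le h_\nu + \beta_*\il\varphi\dd\nu \le P(\beta_*\varphi) = \beta_*\varphi(0)$ already yields $\il\varphi\dd\nu \le \varphi(0)$ at the one value $\beta_*$, so the limit $\beta\to+\infty$ (and the finiteness of $h_\nu$) is not actually needed; this is essentially the argument in Corollary~\ref{c:states}.
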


To introduce our next result, recall that the geometric potential~$-\log Df$ undergoes a phase transition in temperature and that its phase transition parameter~$\ct(-\log Df)$ is equal to~$1$ (Proposition~\ref{p:geometric-potential}).
This function is \textsc{H{\"o}lder} continuous of exponent~$\min \{ \alpha, 1 \}$, so for every~$\gamma$ in~$(0, \min \{ \alpha, 1 \}]$, it belongs to~$\Hg$.
In the case where ${\gamma < \alpha}$, we show that the phase transition in temperature of~$-\log Df$ is nonpersistent in~$\Hg$, and thus~$\ct$ is discontinuous at this potential.
Moreover, we show that the phase transition locus fails to be a co-dimension~$1$ real-analytic subset of~$\Hg$ at~$-\log Df$.
See Proposition~\ref{p:nonpersistent-pt} for a precise statement of these results.

In the remaining case where~$\alpha$ is in~$(0, 1]$ and ${\gamma = \alpha}$, the following result shows that the situation is different.

\begin{theoalph}
  \label{t:alpha-rigidity}
  If~$\alpha$ is in~$(0, 1]$, then every phase transition in temperature is persistent in~$\Ha$, the function~$\cta$ is continuous, and we have
  \begin{equation}
    \label{eq:9}
    \cta^{-1}(1)
    =
    \PTa
    \subseteq
    \ir(\Sca)
    \text{ and }
    \ct^{-1}((0, +\infty))
    =
    \ir(\Sca).
  \end{equation}
\end{theoalph}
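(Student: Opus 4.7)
The plan is to establish the four assertions of Theorem~\ref{t:alpha-rigidity} by first proving the inclusion $\PTa \subseteq \ir(\Sca)$ and then deducing the remainder from Theorem~\ref{t:persistence-locus}, Corollary~\ref{c:pt-in-temperature}, Corollary~\ref{c:persitency-locus}, and Lemma~\ref{l:singular-family}. The case $\gamma = \alpha$ is special because the H\"older exponent exactly matches the order of tangency of~$f$ at the indifferent fixed point~$0$: for any $\varphi \in \Ha$ with $\varphi(0) = 0$, the bound $|\varphi(x)| \le |\varphi|_\alpha x^\alpha$ is sharp against the dynamical scale $f(x) - x \asymp x^{1+\alpha}$, a matching that fails for $\gamma < \alpha$ and permits the nonpersistent phase transitions of Proposition~\ref{p:nonpersistent-pt}.

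\smallskip

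\emph{Main step: $\PTa \subseteq \ir(\Sca)$.} Given $\varphi_0 \in \PTa$, subtract a constant so $\varphi_0(0) = 0$; then $P(\varphi_0) = 0$ and~$\delta_0$ is simultaneously a \textsc{Gibbs} and ground state for~$\varphi_0$. I plan to establish the uniform inequality
\begin{equation*}
  -\il \varphi_0 \dd\nu \ge c \il x^\alpha \dd\nu \qquad \text{for every } \nu \in \sM,
\end{equation*}
for some $c = c(\varphi_0) > 0$. This suffices: for any $\psi \in \Ha$ with $|\psi|_\alpha < c$, setting $\tpsi \= \psi - \psi(0)$ so that $|\tpsi(x)| \le |\psi|_\alpha x^\alpha$, one obtains $\il (\varphi_0 + \tpsi) \dd\nu \le (|\psi|_\alpha - c) \il x^\alpha \dd\nu \le 0$, so~$\delta_0$ remains a ground state for~$\varphi_0 + \psi$, and an $\Ha$-neighborhood of~$\varphi_0$ lies in~$\Sca$. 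To prove the inequality, I would induce~$f$ to the first-return map~$F$ on an interval $[\rho, 1]$ bounded away from~$0$; the map~$F$ is uniformly expanding Markov, and any $\nu \in \sM \setminus \{\delta_0\}$ corresponds via the \textsc{Kac} formula to an $F$-invariant probability~$\hnu$ with integrable return time~$R$. Comparing the induced potentials $\sum_{j=0}^{R-1} \varphi_0 \circ f^j$ and $\sum_{j=0}^{R-1} (f^j)^\alpha$ using the asymptotic $f^n(x) \asymp (\alpha n)^{-1/\alpha}$ along orbits past~$0$, and exploiting the pressure identity $P(\varphi_0) = 0$ as a critical-point condition for the induced pressure, yields the uniform lower bound.

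\smallskip

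\emph{Derivation of the remaining assertions.} With $\PTa \subseteq \ir(\Sca)$ in hand, the other claims follow formally. For persistence: any $\varphi \in \Ha$ with $\cta(\varphi) < +\infty$ satisfies $\cta(\varphi)\varphi \in \PTa \subseteq \ir(\Sca)$ by Lemma~\ref{l:singular-family} applied to $(\beta\varphi)_{\beta \in (0, +\infty)}$, so Theorem~\ref{t:persistence-locus} gives persistence at $\cta(\varphi)\varphi$ and hence, by homogeneity in~$\beta$, at~$\varphi$ itself. For $\cta^{-1}(1) = \PTa$: the inclusion~$\subseteq$ is~\eqref{eq:162}; for~$\supseteq$, take $\varphi \in \PTa$ with $\cta(\varphi) < 1$ and note that for~$\beta$ slightly above $\cta(\varphi)$ the potential~$\beta\varphi$ lies in $\ir(\Sca)$ and outside~$\Ip$, and, since $\PTa$ has empty interior while $\ir(\Sca)$ is open, must lie in the stationary phase throughout a neighborhood of~$\cta(\varphi)$, contradicting $\varphi \in \PTa$ at $\beta = 1$. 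The equality $\ct^{-1}((0, +\infty)) = \ir(\Sca)$: $\supseteq$ is Corollary~\ref{c:persitency-locus}, and $\subseteq$ follows from persistence combined with Theorem~\ref{t:persistence-locus}. Continuity of~$\cta$ on~$\ir(\Sca)$ is Corollary~\ref{c:persitency-locus}; outside~$\Sca$ the function is constantly~$+\infty$, so the last equality above yields continuity in the extended sense.

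\smallskip

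The \textbf{main obstacle} is the uniform inequality in the key step. Invariant measures concentrated on orbits lingering near~$0$ make both $-\il\varphi_0 \dd\nu$ and $\il x^\alpha \dd\nu$ tend to~$0$ simultaneously, and their ratio is precisely where the hypothesis $\gamma = \alpha$ enters: under the induced-system comparison, the two induced potentials grow at the same logarithmic rate in the return time~$R$, which keeps the ratio uniformly bounded below along sequences approaching~$\delta_0$. For $\gamma < \alpha$ this comparison breaks down, reflecting the nonpersistent phase transitions produced in Proposition~\ref{p:nonpersistent-pt}, and it is the additional regularity built into~$\Ha$ that enables the sharp extremality argument used at~$\varphi_0$.
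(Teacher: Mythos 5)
Your reduction to the single inclusion $\PTa \subseteq \ir(\Sca)$, and your reformulation of that inclusion as the uniform inequality $-\il\varphi_0\dd\nu \ge c\il x^\alpha\dd\nu$ (for $\varphi_0 \in \PTa$ normalized to $\varphi_0(0)=0$), are both correct; indeed, since $x\mapsto x^\alpha$ has $\alpha$-H\"older seminorm $1$, that inequality is precisely equivalent to $\varphi_0 \in \ir(\Sca)$. The ``derivation of the remaining assertions'' is also essentially right in outline, except at the very end: the claim that ``outside $\Sca$ the function is constantly $+\infty$, so the last equality above yields continuity in the extended sense'' does not by itself give continuity at a potential $\varphi\in\partial(\ir(\Sca))$; you still need $\cta(\varphi_k)\to+\infty$ as $\varphi_k\to\varphi$ from inside $\ir(\Sca)$, and for that you must invoke the lower semicontinuity of $\cta$ from Proposition~\ref{p:pt-comparison}, as the paper does.

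The genuine gap is in the main step. You observe correctly that $\gamma=\alpha$ makes both induced potentials scale like $\log R$ in the return time; but H\"older-$\alpha$ regularity with $\varphi_0(0)=0$ only gives the \emph{upper} bound $|\varphi_0^*(y)|\lesssim\log R(y)$, whereas you need a matching \emph{lower} bound on $-\varphi_0^*(y)$, and that cannot come from regularity alone. It must come from the pressure condition $P(\varphi_0)=\varphi_0(0)$, and your phrase ``exploiting the pressure identity as a critical-point condition for the induced pressure'' does not identify the mechanism. What actually does the work in the paper is the chain $2\Rightarrow3\Rightarrow4\Rightarrow1$ of Theorem~\ref{p:persistent-phase-transitions}: first, the contrapositive of Lemma~\ref{l:Z_1-criterion} (a bound on the tree pressure via the branch through $J_0$, which uses the bounded-distortion Lemma~\ref{l:Holder-distortion} — the one place where $\gamma=\alpha$ is indispensable) converts $P(\beta\varphi)=\beta\varphi(0)$ into summability of $\zeta_n(\varphi)^\beta$; second, and crucially, summability alone does not give pointwise decay of $\zeta_n$ — one needs the H\"older-$\alpha$ oscillation control~\eqref{eq:130}, which says $\zeta_{k+\ell}(\varphi)\gtrsim\zeta_k(\varphi)$ for $\ell\le k$, to upgrade summability to $\zeta_k(\varphi)\lesssim k^{-1/\beta}$; third, turning this decay along $(x_n)$ into a uniform, perturbation-robust decay of Birkhoff sums along \emph{all} orbits returning to $J_0$ (item~4) requires the careful decomposition into long and short return blocks in~\eqref{eq:146}--\eqref{eq:153}, including a spectral-gap-type estimate~\eqref{eq:139} away from $0$ coming from the \emph{unique} ground state hypothesis; and only then does the Birkhoff ergodic theorem deliver $\varphi\in\ir(\Sca)$. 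Your sketch also glosses over the fact that for small return times the pointwise ratio $-\varphi_0^*(y)/(x^\alpha)^*(y)$ has no a priori lower bound, so one cannot simply compare the two induced potentials pointwise — this is precisely what forces the block decomposition. Without these ingredients the main step is not proved, and no amount of ``critical-point'' heuristics substitutes for them.
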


The following corollary is a direct consequence of Theorems~\ref{t:pt-locus} and~\ref{t:alpha-rigidity}.

\begin{coro}
  \label{c:only-temperature-pt}
  Suppose~$\alpha$ is in~$(0, 1]$, let~$I$ be an open subinterval of~$\R$ containing~$0$, and let~$(\varphi_\tau)_{\tau \in I}$ be a real-analytic family of potentials in~$\Ha$ having a phase transition at~$0$.
  Then, $\varphi_0$ undergoes a phase transition in temperature and its phase transition parameter~$\cta(\varphi_0)$ is equal to~$1$.
\end{coro}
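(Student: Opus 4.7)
The plan is to read off the conclusion directly from Theorems~\ref{t:pt-locus} and~\ref{t:alpha-rigidity}. I interpret the hypothesis ``the real-analytic family $(\varphi_\tau)_{\tau \in I}$ has a phase transition at~$0$'' in accordance with the discussion of Lemma~\ref{l:singular-family} in the introduction, as saying that the scalar function ${\tau \mapsto P(\varphi_\tau)}$ fails to be real-analytic at $\tau = 0$.

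The first and only substantive step is to show $\varphi_0 \in \PTa$. Suppose, toward a contradiction, that $\varphi_0 \notin \PTa$. By Theorem~\ref{t:pt-locus}, the pressure function~$P$ is then real-analytic on an open neighborhood of $\varphi_0$ in~$\Ha$. Since the curve ${\tau \mapsto \varphi_\tau}$ is real-analytic into~$\Ha$ in the sense recalled in~\S\ref{ss:analiticity}, its composition with~$P$ would be real-analytic in a neighborhood of~$0$ in~$I$, contradicting the hypothesis. Hence $\varphi_0 \in \PTa$. Now Theorem~\ref{t:alpha-rigidity} asserts $\PTa = \cta^{-1}(1)$, so $\cta(\varphi_0) = 1$. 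By the definition of~$\cta$ in~\S\ref{ss:(non)persistent-pt}, finiteness of~$\cta(\varphi_0)$ means precisely that~$\varphi_0$ undergoes a phase transition in temperature, and the value~$\cta(\varphi_0) = 1$ identifies the phase transition parameter, as claimed.

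There is no serious obstacle here; the corollary is a direct packaging of Theorems~\ref{t:pt-locus} and~\ref{t:alpha-rigidity}. The only bookkeeping required is the standard fact that composing a real-analytic Banach-space-valued curve with a real-analytic scalar function on the target Banach space yields a real-analytic scalar function, which is immediate from the power-series definition of real-analyticity used in the paper.
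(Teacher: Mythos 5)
Your proof is correct and follows the same route the paper signals: the paper declares Corollary~\ref{c:only-temperature-pt} a ``direct consequence of Theorems~\ref{t:pt-locus} and~\ref{t:alpha-rigidity},'' and that is exactly what you use. Your first step (if $\varphi_0 \notin \PTa$ then $\varphi_0$ lies in the open set $\Ip \cup \Sp$ where $P$ is real-analytic, so $\tau \mapsto P(\varphi_\tau)$ would be real-analytic at $0$, contradiction) matches the discussion immediately following Theorem~\ref{t:pt-locus} in the introduction, and then eq.~\eqref{eq:9} gives $\cta(\varphi_0)=1$, which by the definition of $\cta$ yields the conclusion. One tiny imprecision worth fixing: the paper defines real-analyticity via holomorphic extensions (\S\ref{ss:analiticity}), not via power series; the compositional fact you invoke still follows immediately, because the paper's notion of a holomorphic function on an open subset of $\HgC$ is defined precisely by composition with holomorphic curves, so the claim is essentially tautological in this framework.
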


See Example~\ref{e:non-temperature-pt} for a concrete example showing that the analogous statement is false for every~$\gamma$ in~$(0, 1]$ satisfying ${\gamma < \alpha}$.

The main ingredient in the proof of Theorem~\ref{t:alpha-rigidity} is a characterization of those potentials in~$\Ha$ having a phase transition in temperature (Theorem~\ref{p:persistent-phase-transitions} in~\S\ref{ss:persistent-phase-transitions}).

As mentioned above, for every~$\gamma$ in~$(0, 1]$ satisfying ${\gamma < \alpha}$, the phase transition locus~$\PTg$ fails to be a co-dimension~$1$ real-analytic subset of~$\Hg$.
In the case where~$\alpha$ is in~$(0, 1]$, it is unclear to us whether~$\PTa$ is a co-dimension~$1$ real-analytic subset of~$\Ha$.

\subsection{Notes and references}
\label{ss:notes-references}

The intermittent behavior of a positive entropy \textsc{Gibbs} state reflects the concentration near the neutral fixed point~$0$ of its density with respect to the conformal measure.
See, for example, \cite{LivSauVai99,Tha80}.

We show that for each \textsc{H{\"o}lder} continuous potential~$\varphi$ undergoing a phase transition in temperature, $\delta_0$ is the unique ground state of~$f$ for the potential~$\varphi$ (Corollary~\ref{c:pt-in-temperature} in~\S\ref{ss:pt-locus-structure} and Corollary~\ref{c:states} in~\S\ref{ss:lematta}).
The following is a counterexample for the reverse implication.
Choose~$\gamma$ in~$(\alpha, +\infty)$ and let ${\omega_{\gamma} \: [0, 1] \to \R}$ be the function defined by ${\omega_{\gamma}(x) \= -x^{\gamma}}$.
It is \textsc{H{\"o}lder} continuous of exponent~$\min \{ 1, \gamma \}$.
Clearly, $\delta_0$ is the unique ground state of~$f$ for the potential~$\omega_{\gamma}$.
However, $\omega_{\gamma}$ has no phase transition in temperature by Proposition~\ref{p:hook}.

In \cite{0CorRiv2504a}, we studied phase transitions in temperature for a class of \textsc{Hölder} continuous potentials introduced by \textsc{Sarig} in \cite{Sar01a}.
Using the notion of ``robust phase transition in temperature,'' we proved results analogous to Theorems~\ref{t:persistence-locus} and~\ref{t:alpha-rigidity} \cite[Theorem~3 and Corollary~1.3]{0CorRiv2504a}.
For~$\gamma$ in~$(0,1]$, this notion can be formulated in the space~$\Hg$ as follows: A potential~$\varphi$ in~$\Hg$ exhibits a \emph{robust phase transition in temperature in $\Hg$} if every potential sufficiently close to~$\varphi$ in~$\Hg$ undergoes a phase transition in temperature.
Every phase transition in temperature persistent in~$\Hg$ is also robust in~$\Hg$.
Corollary~\ref{c:persitency-locus} in~\S\ref{ss:(non)persistent-pt} shows that these notions coincide in~$\Hg$.
However, we have not investigated if these notions coincide in the space of potentials in \cite{0CorRiv2504a,Sar01a}.

The inducing scheme has been successful in establishing fundamental properties of intermittent maps.
However, for~$\gamma$ in~$(0, 1]$ satisfying ${\gamma < \alpha/(\alpha + 1)}$, the inducing scheme approach breaks down for potentials in~$\Hg$ because the distortion is, in general, unbounded.
We have thus avoided the inducing scheme approach entirely.

\subsection{Organization}
\label{ss:organization}

After some preliminaries in~\S\ref{s:preliminaries}, we explore the phase diagram in~\S\ref{s:phase-diagram}.
In~\S\ref{ss:phases}, we examine the intermittent and stationary phases.
In~\S\ref{ss:pt-locus}, we prove that the phase transition locus coincides with the set where the pressure function fails to be real-analytic (Theorem~\ref{t:pt-locus}).
In~\S\ref{ss:receding-to-zero-temperature}, we analyze the geometry at infinity of both the stationary phase and the phase transition locus.
Finally, in~\S\ref{ss:proof-pt-locus-structure}, we use this analysis to establish fundamental properties of the phase transition locus and to derive Theorem~\ref{t:pt-locus-structure}.

In~\S\ref{s:phase-transitions} we study the persistence of phase transitions in temperature.
In~\S\ref{ss:zero-to-positive}, we relate, for each~$\gamma$ in~$(0, \min\{\alpha, 1\}]$, the stationary phase~$\Sp$ to the interior of the cone~$\Sc$ and prove Theorem~\ref{t:persistence-locus}.
In~\S\ref{ss:nonpersistent-pt}, we state and prove our results in the case where ${\gamma < \alpha}$.
In~\S\ref{ss:persistent-phase-transitions}, we characterize the potentials that undergo a phase transition in temperature in the remaining case, where ${\alpha \le 1}$ and ${\gamma = \alpha}$.
This characterization is the main ingredient in the proof of Theorem~\ref{t:alpha-rigidity}, which is given in~\S\ref{ss:proof-alpha-rigidity}.

\section{Preliminaries}
\label{s:preliminaries}

Let~$X$ be a topological space.
For a subset~$S$ of~$X$, denote by~$\ir(S)$ and~$\cl(S)$ the interior and the closure of~$S$, respectively.
A subset of~$X$ is a \emph{regular open set}, if it is equal to the interior of its closure.

Given a subset~$K$ of~$\R$, denote by~$\HD(K)$ its \textsc{Hausdorff} dimension.

For each~$\nu$ in~$\sM$, denote by~$\supp(\nu)$ its support.

Denote by~$C(\C)$ the space of continuous complex-valued functions defined on~$[0, 1]$ and for~$\varphi$ in~$C(\C)$, put
\begin{equation}
  \label{eq:10}
  \| \varphi \|
  \=
  \sup_{[0, 1]} |\varphi|.
\end{equation}
Furthermore, denote by~$C(\R)$ the subspace of~$C(\C)$ of real-valued functions and by~$\0$ the function in~$C(\R)$ that is constant equal to~$0$.

For each~$\gamma$ in~$(0, 1]$, denote by~$\HgC$ the space of those functions in~$C(\C)$ that are \textsc{H{\"o}lder} continuous of exponent~$\gamma$.
For each~$\varphi$ in~$\HgC$, put
\begin{equation}
  \label{eq:11}
  | \varphi |_{\gamma}
  \=
  \sup \left\{ \frac{|\varphi(x) - \varphi(y)|}{|x - y|^{\gamma}} \: x, y \in [0, 1] \text{ distinct} \right\}
  \text{ and }
  \| \varphi \|_{\gamma}
  \=
  \| \varphi \| + | \varphi |_{\gamma}.
\end{equation}
Furthermore, denote by~$\Hg$ the subspace of~$\HgC$ of real-valued functions.
The normed spaces $(\Hg, \| \cdot \|_{\gamma})$ and~$(\HgC, \| \cdot \|_{\gamma})$ are both \textsc{Banach}.

\subsection{Holomorphic and real-analytic functions and subsets}
\label{ss:analiticity}
Fix~$\gamma$ in~$(0, 1]$.

Given an open subset~$\Delta$ of~$\C$, a function ${H \: \Delta \to \HgC}$ is \emph{holomorphic} if for every~$z$ in~$\Delta$ there is~$\varphi_z$ in~$\HgC$ such that
\begin{equation}
  \label{eq:12}
  \lim_{h \to 0} \left\| \varphi_z - \frac{H(z + h) - H(z)}{h} \right\|_{\gamma} = 0.
\end{equation}
On other hand, given an open subset~$\cU$ of~$\HgC$, a function ${h \: \cU \to \C}$ is \emph{holomorphic} if for every open subset~$\Delta$ of~$\C$ and every holomorphic function ${H \: \Delta \to \cU}$ the composition~$h \circ H$ is holomorphic.

A real-valued function defined on an open subset of~$\Hg$ is \emph{real-analytic}, if it extends to a complex-valued holomorphic function defined on an open subset of~$\HgC$.
Similarly, a function defined on an open subset of~$\R$ and taking values in~$\Hg$ is \emph{real-analytic}, if it extends to a function defined on an open subset of~$\C$ and taking values in~$\HgC$.

A subset~$\cS$ of~$\Hg$ is a \emph{co-dimension~$1$ real-analytic subset of~$\Hg$}, if for every~$\varphi$ in~$S$ there is an open neighborhood~$\cU$ of~$\varphi$ and a nonconstant real-analytic function ${\cU \to \R}$ vanishing precisely on ${\cU \cap \cS}$.

\begin{lemm}
  \label{l:identity}
  Fix~$\gamma$ in~$(0, 1]$ and let~$\cS$ be a co-dimension~$1$ real-analytic subset of~$\Hg$ that is closed.
  Moreover, let~$I$ be an open interval of~$\R$ and let ${A \: I \to \Hg}$ be a real-analytic function.
  Then, either ${A(I) \subseteq S}$ or every point of~$A^{-1}(S)$ is isolated in~$I$.
\end{lemm}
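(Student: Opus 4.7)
The plan is to run the usual connectedness argument based on the identity principle in one complex variable. Set $E \= A^{-1}(\cS)$, which is closed in $I$ because $\cS$ is closed in $\Hg$ and $A$ is continuous, and let $E'$ denote the set of accumulation points of $E$ in $I$. Since $\R$ is Hausdorff, $E'$ is closed in $I$, and since $E$ is closed, $E' \subseteq E$. The whole task therefore reduces to showing that $E'$ is also open in $I$: the connectedness of $I$ will then force $E' = \emptyset$ or $E' = I$, which is exactly the stated dichotomy, since in the first case every point of $E$ is isolated in $I$, and in the second case $E$ is dense and closed in $I$, hence equal to $I$, so $A(I) \subseteq \cS$.

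To show openness of $E'$, I would fix $t_0 \in E'$ and work locally at $\varphi_0 \= A(t_0) \in \cS$. By the hypothesis that $\cS$ is a co-dimension~$1$ real-analytic subset, there is an open neighborhood~$\cU$ of~$\varphi_0$ in~$\Hg$ and a nonconstant real-analytic function $h \: \cU \to \R$ whose zero set is $\cU \cap \cS$. Unwinding the definitions in~\S\ref{ss:analiticity}, $h$ extends to a holomorphic function $\widetilde{h} \: \widetilde{\cU} \to \C$ on some open subset~$\widetilde{\cU}$ of~$\HgC$ containing~$\varphi_0$, and $A$ extends near~$t_0$ to a holomorphic map $\widetilde{A} \: \Delta \to \HgC$ on an open neighborhood~$\Delta$ of~$t_0$ in~$\C$. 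Shrinking~$\Delta$ by continuity so that $\widetilde{A}(\Delta) \subseteq \widetilde{\cU}$, the composition $g \= \widetilde{h} \circ \widetilde{A}$ is a $\C$-valued function on $\Delta$, and it is holomorphic, because in~\S\ref{ss:analiticity} holomorphy on an open subset of $\HgC$ is defined exactly by the requirement that the pullback along every holomorphic curve be holomorphic.

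Now I would finish with the classical identity theorem. Since $t_0$ is an accumulation point of~$E$, pick a sequence of distinct points $t_n \to t_0$ in $E$; for $n$ large, $A(t_n) \in \cU$, so $g(t_n) = h(A(t_n)) = 0$. By the identity theorem for holomorphic functions of one complex variable, $g \equiv 0$ on the connected component of~$\Delta$ containing~$t_0$. Restricting to the real line yields an open interval $J \subseteq I$ around~$t_0$ with $A(J) \subseteq \{h = 0\} = \cU \cap \cS$, so $J \subseteq E$. Every point of $J$ is then automatically an accumulation point of $E$, so $J \subseteq E'$, which proves $E'$ is open.

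The step that requires the most care is the composition argument in the middle paragraph: one must make sure that the local holomorphic extensions~$\widetilde{h}$ and~$\widetilde{A}$ compose to a genuine $\C$-valued holomorphic function of one complex variable on a neighborhood of~$t_0$. This is precisely what the definition of holomorphy on~$\HgC$ in~\S\ref{ss:analiticity} is set up to guarantee, so there are no estimates to check beyond shrinking~$\Delta$ so that its image lands in~$\widetilde{\cU}$. Everything else is formal; in particular, the nonconstancy clause in the definition of a co-dimension~$1$ real-analytic subset is never used, which matches the fact that the conclusion depends only on the vanishing locus of~$h$.
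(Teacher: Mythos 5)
Your proof is correct and follows essentially the same strategy as the paper's: restrict to an accumulation point $\tau_0$ of $A^{-1}(\cS)$, compose the local real-analytic defining function $h$ with the real-analytic path $A$, extend holomorphically to a one-complex-variable function, apply the Identity Principle to obtain an interval around $\tau_0$ contained in $A^{-1}(\cS)$, and finish by connectedness of $I$. You spell out the open-and-closed connectedness step a bit more explicitly than the paper (which compresses it into ``it is thus sufficient to show that every accumulation point of $K$ is in the interior of $K$''), and your observation that the composition is holomorphic precisely by the definition in~\S\ref{ss:analiticity} is exactly the point the paper is relying on.
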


\begin{proof}
  Put ${K \= A^{-1}(\cS)}$.
  Since~$\cS$ is closed by hypothesis, $K$ is closed in~$I$.
  It is thus sufficient to show that every accumulation point~$\tau_0$ of~$K$ in~$I$ is in the interior of~$K$.
  Let~$\cU$ be an open neighborhood of~$A(\tau_0)$ in~$\Hg$ and ${a \: \cU \to \R}$ a real-analytic function vanishing precisely on ${\cU \cap \cS}$.
  Furthermore, let~$I_0$ be an open subinterval of~$I$ such that ${A(I_0) \subseteq \cU}$.
  Then, the composition~$a \circ A|_{I_0}$ vanishes on each point of ${K \cap I_0}$ and it extends to a holomorphic function defined on a neighborood of~$I_0$ in~$\C$.
  Combined with our hypothesis that~$\tau_0$ is not isolated in ${K \cap I_0}$ and the Identity Principle, it follows that~$a \circ A|_{I_0}$ vanishes on all of~$I_0$.
  In particular, $K$ contains~$I_0$ and therefore~$\tau_0$ is in the interior of~$K$.
  The proof of the lemma is thus complete.
\end{proof}

\subsection{Dynamics and thermodynamic formalism of intermittent maps}
\label{ss:lematta}
Note that the function~$\log Df$ is strictly increasing and \textsc{H{\"o}lder} continuous of exponent~$\min \{1, \alpha\}$.

Denote by~$x_1$ the unique discontinuity of~$f$.
Then ${f(x_1) = 1}$ and the map ${f \: [0, x_1] \to [0, 1]}$ is a diffeomorphism.
Put ${x_0 \= 1}$ and for each integer~$j$ satisfying ${j \ge 2}$, put~$x_j \= f|_{[0, x_1]}^{-(j - 1)}(x_1)$.
Moreover, for every~$j$ in~$\N_0$ put ${J_j \= (x_{j + 1}, x_j]}$ and note that ${f(x_{j + 1}) = x_j}$ and ${f(J_{j + 1}) = J_j}$.

\begin{lemm}[\textcolor{black}{\cite[Lemma~3.1]{0CorRiv2504a}}]
  \label{l:neutral-branch}
  We have
  \begin{equation}
    \label{eq:13}
    \lim_{n \to +\infty} n \cdot x_n^{\alpha}
    =
    \frac{1}{\alpha}.
  \end{equation}
  Moreover, there is~$\varepsilon_0$ in~$(0, 1)$ such that for all~$n$ in~$\N$ and~$x$ in~$J_n$ we have
  \begin{equation}
    \label{eq:14}
    (1 + \varepsilon_0 n)^{\frac{1}{\alpha} + 1}
    \le
    Df^n(x)
    \le
    (1 + \varepsilon_0^{-1} n)^{\frac{1}{\alpha} + 1}.
  \end{equation}
\end{lemm}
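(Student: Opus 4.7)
My plan is to exploit the intermittency recursion satisfied by $(x_n)$ and then transfer its asymptotics to the derivative cocycle $Df^n$.

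For part~(1), I first note that for every $n \ge 1$ we have $x_{n+1} \in [0, x_1]$, so $f(x_{n+1}) = x_{n+1}(1 + x_{n+1}^\alpha) = x_n$, yielding the recursion
\begin{equation*}
  x_n - x_{n+1} = x_{n+1}^{1 + \alpha}.
\end{equation*}
The standard trick is to set $a_n \= x_n^{-\alpha}$. The recursion rewrites as $a_n = a_{n+1}(1 + x_{n+1}^\alpha)^{-\alpha}$, which after Taylor-expanding $(1+y)^{-\alpha}$ at $y = x_{n+1}^\alpha$ gives
\begin{equation*}
  a_{n+1} - a_n = x_{n+1}^{-\alpha}\bigl[1 - (1 + x_{n+1}^\alpha)^{-\alpha}\bigr] = \alpha + O(x_{n+1}^\alpha).
\end{equation*}
A simple monotonicity argument shows $x_n \to 0$, so the right-hand side tends to $\alpha$, and the Ces\`aro--Stolz theorem gives $a_n/n \to \alpha$, that is, $n x_n^\alpha \to 1/\alpha$.

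For part~(2), I use that $Df(x) = 1 + (\alpha + 1)x^\alpha$ on $[0, x_1]$. For $x \in J_n$ and $0 \le k \le n - 1$, the orbit satisfies $f^k(x) \in J_{n-k} \subseteq (0, x_1]$, so
\begin{equation*}
  1 + (\alpha + 1) x_{n-k+1}^\alpha
  < Df(f^k(x))
  \le 1 + (\alpha + 1) x_{n-k}^\alpha.
\end{equation*}
Taking logarithms and summing over $k = 0, \ldots, n-1$, the problem reduces to estimating $\sum_{j=1}^n \log(1 + (\alpha + 1)x_j^\alpha)$. By part~(1) there exist constants $c_1, c_2 > 0$ with $c_1/j \le x_j^\alpha \le c_2/j$ for every $j \ge 1$; together with $\tfrac{1}{2} y \le \log(1 + y) \le y$ for small $y$, each summand is comparable to $1/j$. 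Summing gives $\log Df^n(x) = (1/\alpha + 1)\log n + O(1)$, uniformly in $x \in J_n$. Exponentiating and choosing $\varepsilon_0$ small enough, the right-hand side can be absorbed into expressions of the form $(1 + \varepsilon_0 n)^{1/\alpha + 1}$ from below and $(1 + \varepsilon_0^{-1} n)^{1/\alpha + 1}$ from above.

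The main obstacle is uniformity in $n$: part~(1) only supplies the asymptotic, whereas the lemma's conclusion must hold for \emph{every} $n \in \N$ and every $x \in J_n$. This forces two refinements. First, the comparison $x_j^\alpha \asymp 1/j$ must be upgraded to explicit two-sided bounds valid for every $j \ge 1$; this is possible because $n x_n^\alpha \to 1/\alpha$ combined with the explicit (positive) value of $x_1$ yields uniform constants $c_1, c_2$. Second, $\varepsilon_0$ must be chosen small enough both to absorb the $O(1)$ correction in the partial-sum estimate and to handle the finitely many small-$n$ cases (where one uses instead that $Df^n$ is bounded above and at least $1$ on $J_n$).
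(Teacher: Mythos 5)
Part~(1) of your argument is correct and is the standard Cesàro--Stolz computation. The problem is in part~(2), at the step ``each summand is comparable to $1/j$. Summing gives $\log Df^n(x) = (1/\alpha + 1)\log n + O(1)$.'' That conclusion does not follow from the two-sided bound $c_1/j \le x_j^\alpha \le c_2/j$: those constants only yield
\begin{equation*}
  \tfrac{1}{2}(\alpha+1)c_1 \log n + O(1)
  \;\le\;
  \log Df^n(x)
  \;\le\;
  (\alpha+1)c_2 \log n + O(1),
\end{equation*}
and in general $\tfrac{1}{2}(\alpha+1)c_1 \ne \tfrac{1}{\alpha}+1 \ne (\alpha+1)c_2$ (for instance, for $\alpha$ large one has $x_1^\alpha > 1/\alpha$, forcing $c_2 > 1/\alpha$). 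The lemma, however, demands the exponent $1/\alpha + 1$ exactly: the lower bound $Df^n(x) \ge (1 + \varepsilon_0 n)^{1/\alpha + 1}$ is equivalent, for large $n$, to $\log Df^n(x) \ge (1/\alpha + 1)\log n - O(1)$, and if instead $\log Df^n(x)$ grows like $\theta \log n$ with $\theta < 1/\alpha + 1$, the deficit $(1/\alpha + 1 - \theta)\log n$ is unbounded and cannot be absorbed by any fixed $\varepsilon_0$. So the difficulty you flag as ``uniformity in $n$'' is really a problem of getting the correct leading coefficient, and your proposed fix (explicit $c_1, c_2$ valid for all $j$) does not address it.

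Two standard ways to close the gap. One is analytic: bootstrap part~(1) once. From $a_{j+1} - a_j = \alpha + O(x_{j+1}^\alpha)$ and $x_{j+1}^\alpha \asymp 1/j$ you get $a_j = \alpha j + O(\log j)$, hence $x_j^\alpha = \tfrac{1}{\alpha j} + O\bigl(\tfrac{\log j}{j^2}\bigr)$; then $\log(1 + (\alpha+1)x_j^\alpha) = \tfrac{1/\alpha + 1}{j} + O\bigl(\tfrac{\log j}{j^2}\bigr)$, whose error is summable, so $\log Df^n(x) = (1/\alpha + 1)\log n + O(1)$ with the correct coefficient. The other is structural, avoiding any rate of convergence: use Bernoulli's inequality $1 + (\alpha + 1)x_j^\alpha \le (1 + x_j^\alpha)^{\alpha + 1} = (x_{j-1}/x_j)^{\alpha + 1}$ to telescope the upper bound to $Df^n(x) \le x_n^{-(\alpha+1)}$, and the elementary two-sided estimate $(1 + y)^{\alpha+1}(1 - C y^{2}) \le 1 + (\alpha + 1)y \le (1 + y)^{\alpha+1}$ valid for $y$ in a bounded interval to get the matching lower bound $Df^n(x) \ge c\, x_{n+1}^{-(\alpha+1)}$; part~(1) then converts $x_n^{-(\alpha+1)}$ into $(1 + \varepsilon_0^{\pm 1} n)^{1/\alpha + 1}$ for a suitable $\varepsilon_0$. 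Either route supplies the ingredient your argument is missing.
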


Let~$\sP$ be the partition of~$[0, 1]$ defined by ${\cP \= \{ [0, x_1], (x_1, 1] \}}$ and for every~$n$ in~$\N$ put ${\cP_n \= \bigvee_{k = 0}^{n - 1} \cP}$.

\begin{lemm}\textcolor{black}{\cite[Lemma~3.6]{0CorRiv2504a}}
  \label{l:pressure}
  For every continuous potential~$\varphi$, we have
  \begin{equation}
    \label{eq:intermittent-tree}
    P(\varphi)
    =
    \lim_{n \to + \infty} \frac{1}{n} \log \sum_{Q \in \cP_n} \sup_Q \exp(S_n(\varphi))
  \end{equation}
  and that for every~$x$ in~$(0, 1]$ we have
  \begin{equation}
    \label{eq:15}
    P(\varphi)
    =
    \lim_{n \to + \infty} \frac{1}{n} \log \sum_{x' \in f^{-n}(x)} \exp(S_n(\varphi)(x')).
  \end{equation}
\end{lemm}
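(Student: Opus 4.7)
The plan is to deduce both identities from the variational principle together with an oscillation-Cesaro estimate, using that $\cP$ generates and that its iterates shrink in diameter.

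The first step is to verify that $d_n \= \max_{Q \in \cP_n} \diam(Q) \to 0$. Since $f$ is injective on each element of $\cP$, every $Q \in \cP_n$ is the image of $[0, 1]$ under an inverse-branch composition, of diameter at most $1/\inf_Q Df^n$. Combining the polynomial lower bound of Lemma~\ref{l:neutral-branch} near the neutral fixed point with the uniform expansion on the right branch $(x_1, 1]$ yields $d_n \to 0$. Consequently, $\cP$ is a generating partition and $h_\nu(f) = h_\nu(f, \cP)$ for every $\nu$ in $\sM$.

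To establish \eqref{eq:intermittent-tree}, put $Z_n(\varphi) \= \sum_{Q \in \cP_n} \sup_Q \exp(S_n \varphi)$. A standard subadditivity argument gives the existence of $P(\varphi, \cP) \= \lim_n \tfrac{1}{n} \log Z_n(\varphi)$. The variational principle for the finite partition~$\cP$ (valid here since $f$ is continuous on each element of $\cP$ and respects the discontinuity at~$x_1$), combined with the generating property, yields
\begin{equation*}
  P(\varphi, \cP) = \sup_{\nu \in \sM} \left( h_\nu(f, \cP) + \il \varphi \dd \nu \right) = \sup_{\nu \in \sM} \left( h_\nu(f) + \il \varphi \dd \nu \right) = P(\varphi).
\end{equation*}

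For \eqref{eq:15}, set $W_n(\varphi, x) \= \sum_{x' \in f^{-n}(x)} \exp(S_n \varphi(x'))$. Injectivity of $f^n$ on each $Q \in \cP_n$ implies each cylinder contains at most one preimage of $x$, so $W_n(\varphi, x) \le Z_n(\varphi)$ trivially. For the converse direction, let $x'_Q$ denote the unique preimage in $Q$ (which exists for all but finitely many cylinders). Then
\begin{equation*}
  \sup_Q \exp(S_n \varphi) \le \exp(\osc_Q(S_n \varphi)) \cdot \exp(S_n \varphi(x'_Q)),
\end{equation*}
and since $f^k(Q) \in \cP_{n - k}$, the modulus of continuity $\omega_\varphi$ of $\varphi$ gives
\begin{equation*}
  \max_{Q \in \cP_n} \osc_Q(S_n \varphi) \le \sum_{k = 0}^{n - 1} \omega_\varphi(d_{n - k}) = \sum_{m = 1}^n \omega_\varphi(d_m) = o(n)
\end{equation*}
by Cesaro applied to $\omega_\varphi(d_m) \to 0$, using uniform continuity of $\varphi$. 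Taking $\tfrac{1}{n} \log$ and combining with \eqref{eq:intermittent-tree} finishes the proof.

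The main obstacle is the first step: the polynomial-in-$n$ derivative lower bound near the neutral fixed point is weaker than the usual exponential expansion, so uniformity of $d_n \to 0$ across all cylinders requires carefully combining Lemma~\ref{l:neutral-branch} with the expansion on $(x_1, 1]$. Once this is secured, both identities follow from routine thermodynamic formalism and the oscillation-Cesaro estimate.
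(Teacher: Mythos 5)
Your outline (show $\diam(\cP_n) \to 0$, deduce that $\cP$ is generating, prove the ``tree'' formula \eqref{eq:intermittent-tree} via a partition-pressure identity, and then compare it with \eqref{eq:15} using an oscillation--Cesàro bound) is reasonable and the oscillation argument for the second identity is correct as written. But there are two genuine gaps.

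First, the mechanism you invoke to get $d_n \to 0$ fails for the leftmost cylinder. You claim every $Q \in \cP_n$ satisfies $\diam(Q) \le 1/\inf_Q Df^n$, and this is true, but for the cylinder $[0, x_n]$ the point $0$ is a fixed point with $Df(0) = 1$, so $\inf_{[0,x_n]} Df^n = Df^n(0) = 1$ and the bound yields nothing. The correct statement for this cylinder is simply that its diameter is $x_n$, which tends to $0$ by the explicit asymptotic $x_n \asymp n^{-1/\alpha}$ in Lemma~\ref{l:neutral-branch}; the remaining cylinders each lie in some preimage of $J_0$ and can be controlled via Lemma~\ref{l:geometric-distortion}. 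You flag this as ``the main obstacle'' at the end, but the argument you actually offer does not resolve it.

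Second, and more seriously, the step $P(\varphi, \cP) = P(\varphi)$ is asserted rather than proved. You write that a ``variational principle for the finite partition~$\cP$'' is ``valid here since $f$ is continuous on each element of $\cP$,'' but $f$ itself is discontinuous at $x_1$, so the classical variational principle for topological pressure on continuous maps does not apply off the shelf, and the generating-partition pressure formula needs a result tailored to piecewise-monotone (or piecewise-continuous) interval maps. The route the paper implicitly relies on is different and sidesteps this entirely: Lemma~\ref{l:measurable-soundness} records a continuous factor map $\pi \colon \{0,1\}^{\N_0} \to [0,1]$ semi-conjugating the full $2$-shift to $f$ and inducing a bijection on invariant measures preserving entropy and integrals; the cylinders of $\cP_n$ correspond to cylinders of length $n$ in the shift, and the formula \eqref{eq:intermittent-tree} reduces to the standard pressure formula for the full shift with the continuous potential $\varphi \circ \pi$. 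Your direct approach on $[0,1]$ can likely be made to work by appealing to Hofbauer--Keller or Misiurewicz--Szlenk type results for piecewise monotone maps, but as it stands the key equality is unsupported, whereas the symbolic reduction makes it routine.
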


\begin{lemm}
  \label{l:measurable-soundness}
  The following properties hold.
  \begin{enumerate}
  \item
    The measure-theoretic entropy is upper-semicontinuous and for every continuous potential there is at least~$1$ \textsc{Gibbs} state.
  \item
    We have
    \begin{equation}
      \label{eq:16}
      \sup_{\nu \in \sM} h_\nu
      =
      \log 2
    \end{equation}
    and there is a unique measure~$\nu_{\max}$ realizing this supremum.
    Furthermore, $(f, \nu_{\max})$ is measurably isomorphic to a \textsc{Bernoulli} process.
    That is, there is a measurable map ${[0, 1] \to \{ 0, 1 \}^{\N_0}}$ mapping~$\nu_{\max}$ to the \textsc{Bernoulli} measure with weights~$\frac{1}{2}$ and~$\frac{1}{2}$ and conjugating~$f$ to the shift map on a set of full measure.
  \end{enumerate}
\end{lemm}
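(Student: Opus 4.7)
My plan is to handle the two parts of the lemma separately.

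For part~(1), I would establish upper semicontinuity of $\nu \mapsto h_\nu$ on~$\sM$ and then exploit weak-$*$ compactness. Since~$f$ is piecewise monotonic with precisely two branches $f|_{[0, x_1]}$ and $f|_{(x_1, 1]}$, each a diffeomorphism onto $[0, 1]$, upper semicontinuity of the entropy map will follow from the classical theorem of \textsc{Misiurewicz} for piecewise monotonic interval maps. Existence of a \textsc{Gibbs} state is then immediate: for continuous~$\varphi$, the map $\nu \mapsto \il \varphi \dd \nu$ is weak-$*$ continuous, so $\nu \mapsto h_\nu + \il \varphi \dd \nu$ is upper semicontinuous on the weak-$*$ compact set~$\sM$ and attains its supremum.

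For part~(2), I would construct an explicit measurable isomorphism with the full \textsc{Bernoulli} shift on two symbols. Define the coding $\pi \: [0, 1] \to \{0, 1\}^{\N_0}$ by setting $(\pi(x))_n \= 0$ if $f^n(x) \in [0, x_1]$ and $(\pi(x))_n \= 1$ otherwise. Each branch of~$f$ is a bijection onto $[0, 1]$, so~$\pi$ is surjective, injective outside the countable forward-invariant set $\bigcup_{n \ge 0} f^{-n}(\{x_1\})$, and intertwines~$f$ with the shift~$\sigma$. The key technical point will be that the atoms of~$\cP_n$ have diameters tending uniformly to~$0$: the unique atom contained in $\bigcap_{k = 0}^{n - 1} f^{-k}([0, x_1])$ is $[0, x_n]$, whose diameter tends to~$0$ by Lemma~\ref{l:neutral-branch}; every other atom is eventually mapped diffeomorphically onto a subinterval of $[0, 1]$ by some iterate of~$f$, so its diameter is controlled by the reciprocal of $Df^n$, which grows to infinity by the estimate~\eqref{eq:14} combined with uniform expansion on $(x_1, 1]$. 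It will then follow that~$\cP$ is a generating partition, so by Kolmogorov--Sinai $h_\nu(f) = h_\nu(f, \cP) \le \log 2$ for every $\nu \in \sM$, with equality iff $\pi_* \nu$ is the $(\tfrac{1}{2}, \tfrac{1}{2})$-\textsc{Bernoulli} measure. Taking~$\nu_{\max}$ to be the $\pi$-pullback of this \textsc{Bernoulli} measure (well defined modulo a countable set) yields the unique measure of maximal entropy, and by construction $(f, \nu_{\max})$ is measurably isomorphic to the \textsc{Bernoulli} process.

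The hard part will be the uniform shrinking of the diameters of the atoms of~$\cP_n$, especially those atoms that hover near~$0$ for many iterates before escaping into $(x_1, 1]$. The argument will combine the precise asymptotic $x_n \sim (\alpha n)^{-1/\alpha}$ from Lemma~\ref{l:neutral-branch} with the two-sided estimate~\eqref{eq:14} on $Df^n$ and bounded-distortion control on the non-neutral branch. Once this generating property is in hand, the remainder is a standard assembly of Kolmogorov--Sinai, the variational principle, and the well-known uniqueness of the $(\tfrac{1}{2}, \tfrac{1}{2})$-\textsc{Bernoulli} measure as the maximizer of entropy for the two-shift.
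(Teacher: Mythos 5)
Your proposal reaches the right conclusions by essentially the same underlying mechanism (coding onto the full $2$-shift and invoking its intrinsic ergodicity), but it runs the argument in the opposite direction from the paper and with different supporting tools. The paper cites \cite[Lemma~3.3]{0CorRiv2504a} for a \emph{continuous} surjective semiconjugacy $\pi \colon \{0,1\}^{\N_0} \to [0,1]$ with $\pi \circ \sigma = f \circ \pi$ inducing a bijection $\pi_* \colon \sM_\sigma \to \sM$; continuity of $\pi$ makes $\pi_*$ a homeomorphism of compacts, so upper semicontinuity of entropy is transported for free from the shift (Bowen), and item~(2) follows by pushing forward the $(\tfrac12,\tfrac12)$-Bernoulli measure. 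Your coding map $[0,1]\to\{0,1\}^{\N_0}$ goes the other way; it is merely measurable (discontinuous along the grand orbit of $x_1$), so you cannot transfer upper semicontinuity across it and must invoke a separate result — the Misiurewicz/Hofbauer upper semicontinuity of entropy for piecewise monotone interval maps — to obtain item~(1). That works, but it is a heavier tool than the paper uses, and it needs the (nonobvious) extension to the discontinuous setting, whereas the continuous semiconjugacy bypasses the issue entirely. For item~(2) you replace the push-forward argument by a generating-partition/Kolmogorov--Sinai argument; this is fine and morally the same, since verifying that $\pi$ from the shift is well-defined and continuous requires precisely the same uniform shrinking of cylinder diameters.

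One concrete inaccuracy: the claim that, aside from $[0,x_n]$, the diameter of every $\cP_n$-atom is controlled by ``the reciprocal of $Df^n$, which grows to infinity,'' is not correct as stated. For instance on the atom with itinerary $10^{n-1}$ the quantity $\inf_Q Df^n$ stays bounded (the orbit spends all but one step near the neutral fixed point), yet its diameter still tends to $0$ because $f(Q)\subseteq[0,x_{n-1}]$ and $Df\ge 1+(1+\alpha)x_1^\alpha>1$ on $J_0$. The correct argument combines (a) the explicit asymptotics $x_n\sim(\alpha n)^{-1/\alpha}$ from Lemma~\ref{l:neutral-branch} for the tail blocks of $0$'s, (b) uniform expansion and bounded distortion for the first-return map to $J_0$ (Lemma~\ref{l:geometric-distortion}), and (c) the fact that $Df\ge1$ everywhere, rather than a blanket $Df^n\to\infty$. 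You flag the uniform shrinking as ``the hard part,'' which is fair, but the specific mechanism you cite for it would not survive scrutiny without the more careful case analysis above.

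Finally, a small point: $f|_{(x_1,1]}$ maps onto $(0,1]$, not $[0,1]$, so your $\pi$ is surjective only up to a countable set; this is harmless for the measure-theoretic statements but should be said explicitly, since it is precisely where ``isomorphic in measure'' rather than ``topologically conjugate'' enters.
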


\begin{proof}
  Denote by ${\sigma \: \{0,1\}^{\N_0} \to \{0,1\}^{\N_0}}$ the shift map, by~$\sM_\sigma$ the space of Borel probability measures on $\{0,1\}^{\N_0}$ invariant by $\sigma$, and, for each~$\mu$ in~$\sM_\sigma$, denote by $h_\mu(\sigma)$ the measure-theoretic entropy of~$\mu$.

  By \cite[Lemma~3.3]{0CorRiv2504a}, there is a continuous surjective map~$\pi \: \{0,1\}^{\N_0}\to [0,1]$ such that~$\pi \circ \sigma = \mpf \circ \pi$. Moreover, for every invariant measure~$\nu$ for~$\mpf$, there is a unique invariant measure~$\mu$ for~$\sigma$ such that $\pi_* \mu = \nu$, and the systems $(\{0,1\}^{\N_0}, \sigma, \mu)$ and $([0,1], f, \nu)$ are isomorphic in measure.
  In particular, the map ${\pi_*\: \sM_\sigma \to \sM}$ is one-to-one.
  Since~$\sM_{\sigma}$ is compact and~$\pi$ is continuous, it follows that~$\pi_*$ is a homeomorphism.
  Together with the fact that the measure-theoretic entropy of~$\sigma$ is upper semicontinuous, see, for example, \cite[Proposition~2.19]{Bow08}, this implies the same property for~$f$.
  Since~$\sM$ is compact, it follows that for every continuous potential there is at least~$1$ \textsc{Gibbs} state.
  This proves item~$1$.

  To prove item~$2$, note that
  \begin{equation}
    \sup_{\mu \in \sM_{\mpf} } h_\mu = \sup_{\mu \in \sM_{\sigma} } h_\mu(\sigma) = \log 2
  \end{equation}
  and recall that the \textsc{Bernoulli} measure~$\rho$ with weights~$\frac{1}{2}$ and~$\frac{1}{2}$ on~$\{0,1\}^{\N_0}$ is the unique measure of maximal entropy for~$\sigma$.
  It follows that~$\pi_*\rho$ is the unique maximal entropy measure for~$f$, and the systems $(\sigma, \rho)$ and $(f, \pi_*\rho)$ are isomorphic in measure.
\end{proof}

The following is a version for intermittent maps of a result originated in~\cite{InoRiv12} in the context of complex rational maps, see also the version for multimodal maps in~\cite{Li15}.
See \cite[Appendix~A]{0CorRiv2504a} for a simplified proof for intermittent maps.

\begin{generic}[Key Lemma]
  For every \textsc{H{\"o}lder} continuous potential~$\varphi$ and every~$\nu$ in~$\sM$ distinct from~$\delta_0$, we have
  \begin{equation}
    \label{eq:17}
    P(\varphi)
    >
    \il \varphi \dd \nu.
  \end{equation}
\end{generic}

\begin{coro}
  \label{c:states}
  Let~$\varphi$ be a \textsc{H{\"o}lder} continuous potential such that for some~$\beta_*$ in~$(0, +\infty)$ we have ${P(\beta_*\varphi) = \beta_* \varphi(0)}$.
  Then, $\delta_0$ is the unique ground state of~$f$ for the potential~$\varphi$ and for every~$\beta$ in~$(\beta_*, +\infty)$ the measure~$\delta_0$ is the unique \textsc{Gibbs} state of~$f$ for the potential~$\varphi$.
\end{coro}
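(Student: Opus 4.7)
The plan is to derive both conclusions by combining the hypothesis $P(\beta_*\varphi) = \beta_*\varphi(0)$ with two elementary inputs: the Key Lemma just stated, and the fact that $\delta_0$ is $f$-invariant, so $\int \varphi \,d\delta_0 = \varphi(0)$ and $h_{\delta_0} = 0$.

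For the ground state assertion, I would apply the Key Lemma to the Hölder continuous potential $\beta_*\varphi$. For every $\nu \in \sM$ distinct from $\delta_0$ this yields
\begin{equation*}
  \beta_*\varphi(0) \;=\; P(\beta_*\varphi) \;>\; \il \beta_*\varphi \,\dd\nu,
\end{equation*}
so dividing by $\beta_* > 0$ gives $\varphi(0) > \il \varphi \,\dd\nu$. Since $\int \varphi \,d\delta_0 = \varphi(0)$, this identifies $\delta_0$ as the unique ground state of $f$ for~$\varphi$.

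For the Gibbs state assertion, fix $\beta \in (\beta_*, +\infty)$. The variational principle and the hypothesis give, for every $\nu \in \sM$,
\begin{equation*}
  h_\nu \;\le\; P(\beta_*\varphi) - \beta_* \il \varphi \,\dd\nu \;=\; \beta_*\!\left(\varphi(0) - \il \varphi \,\dd\nu\right).
\end{equation*}
Combining this with the inequality $\il \varphi \,\dd\nu \le \varphi(0)$ just established, and using $\beta - \beta_* > 0$, I would obtain
\begin{equation*}
  h_\nu + \il \beta\varphi \,\dd\nu \;\le\; \beta_*\varphi(0) + (\beta - \beta_*) \il \varphi \,\dd\nu \;\le\; \beta\varphi(0).
\end{equation*}
Since $\delta_0$ attains $\beta\varphi(0)$, taking the supremum gives $P(\beta\varphi) = \beta\varphi(0)$ and shows that $\delta_0$ is a Gibbs state for $\beta\varphi$. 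For uniqueness, note that if some $\nu$ realizes equality in the displayed chain, then the second inequality forces $\il \varphi \,\dd\nu = \varphi(0)$, which by the ground state part forces $\nu = \delta_0$.

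There is no real obstacle here; the only subtlety is to notice that, because $\beta - \beta_* > 0$, the (possibly strict) inequality $\il \varphi \,\dd\nu \le \varphi(0)$ can be strengthened from a control at parameter $\beta_*$ to a strict gap at parameter $\beta$. Strictness of the Key Lemma is what makes both the ground state uniqueness and the Gibbs state uniqueness fall out of the same computation.
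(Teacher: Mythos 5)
Your proof is correct and follows essentially the same route as the paper's: both rest on the Key Lemma applied at the potential $\beta_*\varphi$, the variational principle, and the gap $\beta-\beta_*>0$. Your version is slightly more streamlined — for the ground-state part you read off $\varphi(0)>\int\varphi\,\mathrm{d}\nu$ for $\nu\neq\delta_0$ directly from the Key Lemma, whereas the paper first establishes that $\delta_0$ is a ground state via a sandwich inequality and then proves uniqueness by showing $h_{\nu_0}=0$; and for the Gibbs-state part you package the estimate so that equality forces $\int\varphi\,\mathrm{d}\nu=\varphi(0)$, while the paper extracts $h_\nu=0$ and then re-invokes the Key Lemma — but the underlying mechanism is identical.
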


\begin{proof}
  To prove the first assertion, note that
  \begin{equation}
    \label{eq:18}
    \beta \varphi(0)
    =
    P(\beta \varphi)
    \ge
    \beta \sup_{\nu \in \sM} \il \varphi \dd \nu
    \ge
    \beta \varphi(0).
  \end{equation}
  Thus, $\delta_0$ is a ground state of~$f$ for the potential~$\varphi$.
  To prove uniqueness, note that for every ground state~$\nu_0$ of~$f$ for~$\varphi$ we have
  \begin{equation}
    \label{eq:19}
    \il \varphi \dd \nu_0
    =
    \sup_{\nu \in \sM} \il \varphi \dd \nu
    =
    \varphi(0)
    =
    P(\varphi)
    \ge
    h_{\nu_0} + \il \varphi \dd \nu_0
  \end{equation}
  and therefore
  \begin{equation}
    \label{eq:20}
    h_{\nu_0} = 0
    \text{ and }
    P(\varphi)
    =
    \il \varphi \dd \nu_0.
  \end{equation}
  Together with the Key Lemma, this implies ${\nu_0 = \delta_0}$.

  To prove the second assertion, let~$\beta$ in~$(\beta_*, +\infty)$ be given and let~$\nu$ be a \textsc{Gibbs} state of~$f$ for the potential~$\beta \varphi$ (Lemma~\ref{l:measurable-soundness}(2)).
  Then we have
  \begin{equation}
    \label{eq:21}
    \beta \varphi(0)
    =
    \frac{\beta}{\beta_*} P(\beta_* \varphi)
    \ge
    \frac{\beta}{\beta_*} h_\nu + \beta \il \varphi \dd \nu
    =
    \frac{\beta - \beta_*}{\beta_*} h_\nu + P(\beta \varphi)
    \ge
    \frac{\beta - \beta_*}{\beta_*} h_\nu + \beta \varphi(0)
  \end{equation}
  and therefore ${h_\nu = 0}$ and ${P(\beta \varphi) = \beta \varphi(0)}$.
  The latter implies that~$\delta_0$ is a \textsc{Gibbs} state of~$f$ for the potential~$\varphi$.
  Together with the Key Lemma, the former implies ${\chi_{\nu}(f) = 0}$ and therefore ${\nu = \delta_0}$.
  That is, $\delta_0$ is the unique \textsc{Gibbs} state of~$f$ for the potential~$\varphi$.
\end{proof}

\section{Phase diagram}
\label{s:phase-diagram}
This section explores the phase diagram.
In~\S\ref{ss:phases}, we examine the intermittent and stationary phases (Proposition~\ref{p:phases}).
In~\S\ref{ss:pt-locus}, we prove that the phase transition locus coincides with the set where the pressure function fails to be real-analytic (Theorem~\ref{t:pt-locus}).
In~\S\ref{ss:receding-to-zero-temperature}, we analyze the geometry at infinity of both the stationary phase and the phase transition locus (Proposition~\ref{p:receding-to-zero-temperature}).
Building on this analysis, we establish fundamental facts about the structure of the phase transition locus (Proposition~\ref{p:pt-locus-structure}) and derive Theorem~\ref{t:pt-locus-structure} in~\S\ref{ss:proof-pt-locus-structure}.

\subsection{Intermittent and stationary phases}
\label{ss:phases}
The goal of this section is to prove the following.

\begin{prop}
  \label{p:phases}
  Let~$\gamma$ in~$(0, 1]$ be given.
  The intermittent phase~$\Ip$ contains~$\0$ and it is therefore nonempty.
  In the case where~$\alpha < 1$ and ${\gamma > \alpha}$, we have
  \begin{equation}
    \label{eq:22}
    \Ip
    =
    \Hg
    \text{ and }
    \Sp
    =
    \emptyset.
  \end{equation}
  In the remaining case where~$\gamma \le \alpha$, the stationary phase~$\Sp$ is nonempty and the following properties hold.
  \begin{enumerate}
  \item
    The intermittent phase~$\Ip$ is star-convex at~$\0$.
    In particular, $\Ip$ is connected.
  \item
    The stationary phase~$\Sp$ is unbounded, convex, and coincides with the interior of
    \begin{equation}
      \label{eq:23}
      \{ \varphi \in \Hg \: \delta_0 \text{ is a \textsc{Gibbs} state of~$f$ for the potential~$\varphi$} \}.
    \end{equation}
    In particular, $\Sp$ is connected.
  \item
    Each of the phases~$\Ip$ and~$\Sp$ is a regular open set.
    Furthermore, these sets are disjoint and their union is dense in~$\Hg$.
    In particular, we have
    \begin{equation}
      \label{eq:24}
      \partial \Ip
      =
      \partial \Sp
      =
      \Hg \setminus (\Ip \cup \Sp).
    \end{equation}
  \end{enumerate}
\end{prop}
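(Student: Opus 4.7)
The backbone of the proof is the characterization ${\Ip = \{\varphi \in \Hg \: P(\varphi) > \varphi(0) \}}$ from \cite[Theorem~A.2]{0InoRiv25}. The plan is to set ${Z \= \Hg \setminus \Ip = \{\varphi \in \Hg \: P(\varphi) = \varphi(0) \}}$ and reduce most assertions to convex analysis on~$Z$. Since~$P$ is convex and \textsc{Lipschitz} continuous and ${P(\varphi) \ge \varphi(0)}$ for every~$\varphi$ (take ${\nu = \delta_0}$ in~\eqref{eq:3}), $Z$ is closed and convex; because ${h_{\delta_0} = 0}$, a potential~$\varphi$ lies in~$Z$ if and only if~$\delta_0$ is a \textsc{Gibbs} state of~$f$ for~$\varphi$. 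The inclusion ${\0 \in \Ip}$ follows from Lemma~\ref{l:measurable-soundness}(2), which yields ${P(\0) = \log 2 > 0}$. The case ${\alpha < 1}$, ${\gamma > \alpha}$ reduces to the cited identity ${\Ip = \Hg}$, forcing ${Z = \emptyset}$ and ${\Sp = \emptyset}$. Assume henceforth ${\gamma \le \alpha}$, so that Proposition~\ref{p:hook}(2) gives ${\beta \omega_\gamma \in \Sp}$ for all sufficiently large~$\beta$, delivering both nonemptiness and unboundedness of~$\Sp$.

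For item~(1), I would fix ${\varphi \in \Ip}$ and let~$\nu$ be the exponentially mixing \textsc{Gibbs} state of~$f$ for~$\varphi$ with ${h_\nu > 0}$ supplied by the definition of~$\Ip$. Applying the variational principle~\eqref{eq:3} to~$t\varphi$ with this fixed~$\nu$ yields, for every ${t \in [0, 1]}$,
\begin{equation*}
  P(t\varphi) \,\ge\, h_\nu + t \il \varphi \dd \nu \,=\, (1-t) h_\nu + t P(\varphi).
\end{equation*}
Since ${h_\nu > 0}$ and ${P(\varphi) > \varphi(0)}$, the right-hand side strictly exceeds~${t\varphi(0)}$, hence ${t \varphi \in \Ip}$. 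Star-convexity at~$\0$ then gives path-connectedness, hence connectedness.

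The heart of the argument is item~(2), namely ${\Sp = \ir(Z)}$. The inclusion ${\Sp \subseteq \ir(Z)}$ is immediate, since~$\Sp$ is open and ${\Sp \subseteq Z}$. For the reverse, I would argue by contradiction: if ${\varphi \in \ir(Z)}$ admits a \textsc{Gibbs} state ${\nu \ne \delta_0}$, then for every~$\psi$ in a small enough $\Hg$-neighborhood of~$\0$ one has ${\varphi + \psi \in Z}$, whence ${P(\varphi + \psi) = (\varphi + \psi)(0)}$; combined with the variational bound ${P(\varphi + \psi) \ge h_\nu + \il(\varphi + \psi) \dd \nu}$, this delivers ${\psi(0) \ge \il \psi \dd \nu}$. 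Applied to~$\pm \psi$ and scaled, ${\psi(0) = \il \psi \dd \nu}$ for every ${\psi \in \Hg}$; the uniform density of~$\Hg$ in ${C(\R)}$ then forces ${\nu = \delta_0}$, a contradiction. Convexity of~${\Sp = \ir(Z)}$ is inherited from that of~$Z$, which in turn gives connectedness.

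Finally, for item~(3), I would use that~$Z$ is closed and convex with nonempty interior (by what precedes) to invoke the standard identity ${\cl(\ir(Z)) = Z}$; then ${\cl(\Sp) = Z}$ and ${\ir(\cl(\Sp)) = \ir(Z) = \Sp}$, so~$\Sp$ is regular open. Using only that~$Z$ is closed, ${\cl(\Ip) = \cl(\Hg \setminus Z) = \Hg \setminus \ir(Z) = \Hg \setminus \Sp}$, so ${\ir(\cl(\Ip)) = \Hg \setminus \cl(\Sp) = \Hg \setminus Z = \Ip}$, and~$\Ip$ is regular open. Disjointness of~$\Ip$ and~$\Sp$ is built in, and since ${\Ip \cup \Sp = \Hg \setminus \partial Z}$ while~$\partial Z$ (the boundary of the closed set~$Z$) has empty interior, the union is dense; the three boundaries all coincide with~$\partial Z$. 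The one substantive step, and where I expect the main obstacle to lie, is the argument ${\ir(Z) \subseteq \Sp}$ in item~(2), which probes~$\nu$ in every \textsc{Hölder} direction via the variational principle; everything else is convex-analytic bookkeeping combined with the characterization of~$\Ip$ from~\cite{0InoRiv25} and the existence of the potentials~$\beta \omega_\gamma$ provided by Proposition~\ref{p:hook}.
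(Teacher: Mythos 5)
Your argument is correct and follows the paper's overall architecture --- characterize $\Ip$ via Theorem~\ref{t:intermittent-potentials}, set $Z\=\Hg\setminus\Ip=\{\varphi:P(\varphi)=\varphi(0)\}$, prove $\Sp=\ir(Z)$, and read everything else off the convexity and closedness of~$Z$ --- but it replaces one substantive step with a genuinely different argument and has one small gap.

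The divergence is in the inclusion $\ir(Z)\subseteq\Sp$. You take any \textsc{Gibbs} state~$\nu$ of a potential $\varphi\in\ir(Z)$ and show $\nu=\delta_0$ directly by probing in every \textsc{H\"older} direction: for small $\psi$, $\varphi+\psi\in Z$ gives $\psi(0)\ge\il\psi\dd\nu$, applying this to $\pm\psi$ and rescaling forces equality for all $\psi\in\Hg$, and uniform density of $\Hg$ in $C(\R)$ then gives $\nu=\delta_0$. The paper instead scales $\varphi$ by some $\beta_*<1$ close to $1$ so that $\beta_*\varphi\in Z$ and invokes Corollary~\ref{c:states}, whose uniqueness conclusion rests on the Key Lemma (the strict inequality $P(\varphi)>\il\varphi\dd\nu$ for $\nu\ne\delta_0$, a nontrivial import). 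Your route sidesteps the Key Lemma entirely; it exploits the fact that on a neighborhood of $\varphi$ the pressure is the \emph{linear} functional $\psi\mapsto\psi(0)$, so the subdifferential-type inequality pins down $\nu$ in all directions. This is a clean and more elementary alternative for this particular step, at the cost that it does not also give the ground-state uniqueness that Corollary~\ref{c:states} provides as a byproduct (though that is not needed for this proposition).

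The gap: you assert that ``Proposition~\ref{p:hook}(2) gives $\beta\omega_\gamma\in\Sp$ for all sufficiently large $\beta$.'' What Proposition~\ref{p:hook}(2) actually gives is $\beta\omega_\gamma\notin\Ip$, i.e.\ $\beta\omega_\gamma\in Z$; membership in $\Sp=\ir(Z)$ requires an additional perturbation argument. It is short: if $\beta_0$ is such that $\beta_0\omega_\gamma\in Z$, then for $\beta>\beta_0$ and any $\Delta$ with $|\Delta|_\gamma\le\beta-\beta_0$, \eqref{eq:26} applied to $-\Delta$ yields $\beta\omega_\gamma+\Delta\le(\beta-|\Delta|_\gamma)\omega_\gamma+\Delta(0)\le\beta_0\omega_\gamma+\Delta(0)$, hence $P(\beta\omega_\gamma+\Delta)\le P(\beta_0\omega_\gamma)+\Delta(0)=\Delta(0)$, so $\beta\omega_\gamma+\Delta\in Z$. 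This is precisely what the paper's Lemma~\ref{l:stationarization} packages. The gap is easy to close but should be closed, since nonemptiness of $\Sp$ drives item~(3) and the $\gamma\le\alpha$ dichotomy. The remaining items --- star-convexity of $\Ip$ at $\0$ and the regular-open/boundary bookkeeping --- are as in the paper and correct.
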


The first assertion of the proposition, that in the case where~$\alpha < 1$ and ${\gamma > \alpha}$ we have ${\Ip = \Hg}$, was shown in~\cite{LiRiv14b, LiRiv14a} as a consequence of a more general result.
In the case where ${\varphi(0) = \varphi(1)}$, see also~\cite[Proposition~11]{GarIno24} or \cite[Theorem~A]{Klo20}.
We give a proof specialized for intermittent maps that is significantly shorter.

The proof of Proposition~\ref{p:phases} is at the end of this section.
It relies on the following direct consequence of \cite[Theorem~A.2]{0InoRiv25}, which is also used several times in the rest of the paper.

\begin{theo}
  \label{t:intermittent-potentials}
  For every~$\gamma$ in~$(0, 1]$, we have
  \begin{equation}
    \label{eq:25}
    \Ip
    =
    \{ \varphi \in \Hg \: P(\varphi) > \varphi(0) \},
  \end{equation}
  the set~$\Ip$ is open in~$\Hg$, and the pressure function~$P$ is real-analytic on~$\Ip$.
\end{theo}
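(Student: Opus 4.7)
The plan is to derive all three assertions in parallel from a spectral-gap analysis of the transfer operator
\[
\mathcal{L}_\varphi g(x) \coloneqq \sum_{y \in f^{-1}(x)} \exp(\varphi(y)) g(y),
\]
acting on a Banach space adapted to the intermittent dynamics of~$f$---for example, a weighted H{\"o}lder or BV space on~$[0,1]$ whose norm ``peels off'' the contribution of the neutral fixed point at~$0$. The main technical claim to establish is: for every~$\varphi \in \Hg$ with $P(\varphi) > \varphi(0)$, the operator~$\mathcal{L}_\varphi$ has $e^{P(\varphi)}$ as a simple, isolated eigenvalue strictly dominating the rest of its spectrum, with continuous eigenfunction~$h_\varphi$ positive on~$(0,1]$ and a probability eigenmeasure~$m_\varphi$ of the adjoint. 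The strict gap reflects the strict inequality $P(\varphi) > \varphi(0)$: the ``neutral branch'' of~$\mathcal{L}_\varphi$ contributes spectral radius~$e^{\varphi(0)}$, which is thereby separated from~$e^{P(\varphi)}$.

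Once this spectral gap is in hand, everything else follows. The containment $\{\varphi \in \Hg \: P(\varphi) > \varphi(0)\} \subseteq \Ip$ holds because $\nu_\varphi \coloneqq h_\varphi m_\varphi$ is then the unique equilibrium state, the spectral gap yields exponential decay of correlations of the required form, and since $\nu_\varphi \neq \delta_0$ the Key Lemma forces $h_{\nu_\varphi} > 0$. Openness of~$\Ip$ in~$\Hg$ is automatic from the characterization: $\Ip$ is cut out by the strict inequality between two continuous (in fact Lipschitz) functions on~$\Hg$. Real-analyticity of~$P$ on~$\Ip$ follows by extending $\varphi \mapsto \mathcal{L}_\varphi$ to a holomorphic family of operators defined on an open subset of~$\HgC$ containing~$\Ip$ and applying Kato--Rellich perturbation theory to the isolated simple eigenvalue~$e^{P(\varphi)}$: the leading eigenvalue depends holomorphically on the potential, so its logarithm~$P$ is real-analytic on~$\Ip$ in the sense of~\S\ref{ss:analiticity}.

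For the reverse inclusion $\Ip \subseteq \{\varphi \: P(\varphi) > \varphi(0)\}$, suppose $\varphi \in \Ip$. The variational principle applied to~$\delta_0$ gives $P(\varphi) \ge \varphi(0)$, and I want to rule out equality. If $P(\varphi) = \varphi(0)$, then~$\delta_0$ would itself be an equilibrium state; any Gibbs state~$\nu$ witnessing $\varphi \in \Ip$ must differ from~$\delta_0$ (since $h_\nu > 0 = h_{\delta_0}$), and the Key Lemma then yields $\il \varphi \dd \nu < \varphi(0)$ and $h_\nu = \varphi(0) - \il \varphi \dd \nu > 0$. The contradiction is that at the critical pressure no equilibrium state distinct from~$\delta_0$ can be exponentially mixing: in the spectral picture the ``would-be leading eigenvalue''~$e^{P(\varphi)}$ collapses onto the neutral-branch eigenvalue~$e^{\varphi(0)}$ and the spectral gap vanishes, while classically the decay of correlations at criticality for intermittent maps is at most polynomial~\cite{Hu04,Gou04a,Sar02}.

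The main obstacle is the spectral-gap statement itself, which is the substantive content of~\cite[Theorem~A.2]{0InoRiv25}. The standard inducing-scheme route is unavailable across the full H{\"o}lder range considered here, because for $\gamma < \alpha/(\alpha + 1)$ the induced potential has unbounded distortion, as noted in~\S\ref{ss:notes-references}. One must therefore construct the Banach space directly---typically by choosing a norm with additional weight near~$0$ so that the essential spectral radius is pushed below~$e^{P(\varphi)}$ whenever $P(\varphi) > \varphi(0)$---and verify Lasota--Yorke-type estimates suitable for a Keller--Liverani functional-analytic framework.
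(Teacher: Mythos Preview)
The paper does not prove this theorem at all: it is stated as ``a direct consequence of \cite[Theorem~A.2]{0InoRiv25},'' a reference listed as ``in preparation.'' So there is no proof in the paper to compare against; you are correctly identifying that the substantive content lives in the external reference, and your sketch is a plausible outline of what that reference presumably does (transfer-operator spectral gap on an adapted Banach space, Kato--Rellich for analyticity, etc.). Your observation that the inducing scheme is unavailable for~$\gamma < \alpha/(\alpha+1)$ matches the paper's own remark in~\S\ref{ss:notes-references}, and the forward inclusion, openness, and real-analyticity parts of your sketch are standard once the spectral gap is granted.

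The one genuine soft spot is your reverse inclusion~$\Ip \subseteq \{P(\varphi) > \varphi(0)\}$. To rule out an exponentially mixing positive-entropy equilibrium state at~$P(\varphi) = \varphi(0)$, you invoke \cite{Hu04,Gou04a,Sar02}, but those results concern the geometric potential (or closely related ones), not arbitrary H{\"o}lder potentials on the critical surface; and your ``spectral picture'' argument that the gap collapses is a heuristic, not a proof---one would need to know that exponential mixing of~$\nu$ \emph{forces} a spectral gap for~$\cL_\varphi$ on the chosen Banach space, which is not automatic. This direction likely requires either a direct argument (e.g., showing that any equilibrium state at criticality has subexponential return-time tails to~$J_0$, incompatible with~\eqref{eq:4}) or is simply part of the package proved in \cite[Theorem~A.2]{0InoRiv25}. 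Since you already flag the spectral-gap theorem as the unproved core, this is consistent with treating the whole statement as a black-box import---which is exactly what the paper does.
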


For each~$\gamma$ in~$(0, +\infty)$, let~$\omega_\gamma$ be the function in~$\Hg$ defined by ${\omega_\gamma(x) \= -x^{\gamma}}$.
In the case where~$\gamma$ is in~$(0, 1]$, this function is in~$\Hg$ and for every~$\psi$ in~$\Hg$ we have
\begin{equation}
  \label{eq:26}
  \psi
  \ge
  \psi(0) + |\psi|_\gamma \omega_\gamma.
\end{equation}
Recall from~\S\ref{ss:pt-locus-structure} that a \textsc{H{\"o}lder} continuous potential~$\varphi$ undergoes a phase transition in temperature, if the function ${\beta \mapsto P(\beta \varphi)}$ fails to be real-analytic on~$(0, +\infty)$.

\begin{prop}[\textcolor{black}{\cite[Proposition~2.8]{0CorRiv2504a}}]
  \label{p:hook}
  For every~$\gamma$ in~$(0, +\infty)$, the following properties hold.
  \begin{enumerate}
  \item
    If ${\gamma > \alpha}$, then for every~$\beta$ in~$(0, +\infty)$ we have ${P(\beta \omega_{\gamma}) > 0}$ and~$\beta \omega_{\gamma}$ is in~$\Ip$.
  \item
    If ${\gamma \le \alpha}$, then for every sufficiently large~$\beta$ in~$(0, +\infty)$ we have ${P(\beta \omega_\gamma) = 0}$ and~$\beta \omega_\gamma$ is outside~$\Ip$.
  \end{enumerate}
  In particular, the potential~$\omega_\gamma$ undergoes a phase transition in temperature if and only if ${\gamma \le \alpha}$.
\end{prop}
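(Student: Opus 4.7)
The plan is to treat the two cases $\gamma \le \alpha$ and $\gamma > \alpha$ separately, using the variational formula together with the observation $\omega_\gamma(0) = 0$; by Theorem~\ref{t:intermittent-potentials}, $\beta \omega_\gamma \in \Ip$ iff $P(\beta \omega_\gamma) > 0$, and the automatic lower bound $P(\beta \omega_\gamma) \ge 0$ (realized by $\delta_0$) is available throughout. Once the two cases are in hand, the final assertion about phase transitions in temperature is immediate: for $\gamma > \alpha$ the entire ray $(\beta \omega_\gamma)_{\beta > 0}$ lies in $\Ip$, where $P$ is real-analytic, whereas for $\gamma \le \alpha$ the function $\beta \mapsto P(\beta \omega_\gamma)$ equals $\log 2$ at $\beta = 0$ but vanishes identically on a half-line, so it cannot be real-analytic on all of $(0, +\infty)$.

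For the case $\gamma \le \alpha$, I would combine a Ruelle-type inequality $h_\nu \le \il \log Df \, \dd\nu$ with the explicit estimate $\log Df(x) \le (\alpha + 1) x^\alpha$, itself obtained from $Df(x) = 1 + (\alpha + 1) x^\alpha$ and $\log(1 + t) \le t$. Since $\gamma \le \alpha$ forces $x^\alpha \le x^\gamma$ on $[0, 1]$, we get $h_\nu \le (\alpha + 1) \il x^\gamma \, \dd\nu$ for every $\nu$ in $\sM$. For $\beta \ge \alpha + 1$ this gives
\[
  h_\nu + \beta \il \omega_\gamma \, \dd\nu
  =
  h_\nu - \beta \il x^\gamma \, \dd\nu
  \le
  ((\alpha + 1) - \beta) \il x^\gamma \, \dd\nu
  \le 0.
\]
Taking the supremum over $\nu$ yields $P(\beta \omega_\gamma) \le 0$, which combined with $P \ge 0$ forces $P(\beta \omega_\gamma) = 0$, and hence $\beta \omega_\gamma \notin \Ip$.

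For the case $\gamma > \alpha$, the crucial preliminary is the summability $\sum_{n \ge 1} x_n^\gamma < \infty$, which follows from Lemma~\ref{l:neutral-branch} since $x_n^\gamma \sim (n \alpha)^{-\gamma/\alpha}$ and $\gamma/\alpha > 1$. For an arbitrary $\beta > 0$, the plan is to produce, for each integer $N$, an $f$-invariant probability measure $\mu_N$ with $h_{\mu_N} + \beta \il \omega_\gamma \, \dd \mu_N > 0$ as soon as $N$ is large, using the first-return map $F$ to $J_0 = (x_1, 1]$. The map $F$ has a countable full-branch structure: the branch with return time $\tau = n$ is $I_n \= J_0 \cap f^{-1}(J_{n - 1})$, and $F|_{I_n} = f^n|_{I_n}$ is a diffeomorphism onto $J_0$. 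On $I_n$ the orbit visits the regions $J_0, J_{n - 1}, J_{n - 2}, \dots, J_1$ at most once each, so the induced potential
\[
  \widetilde{\omega}_\gamma(x) \= \sum_{k = 0}^{\tau(x) - 1} \omega_\gamma(f^k(x))
\]
is bounded in absolute value by $C \= 1 + \sum_{j \ge 1} x_j^\gamma < \infty$. Taking $\mu_F^{(N)}$ to be the uniform Bernoulli measure on the full $N$-shift obtained by restricting $F$ to branches with $\tau \le N$, and letting $\mu_N$ be the $f$-invariant probability measure given by Kac's construction, Abramov's formulas for entropy and for integrals yield
\[
  h_{\mu_N}(f) + \beta \il \omega_\gamma \, \dd \mu_N
  \ge
  \frac{\log N - \beta C}{\il \tau \, \dd \mu_F^{(N)}},
\]
which is strictly positive as soon as $N > e^{\beta C}$. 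This gives $P(\beta \omega_\gamma) > 0$ and thus $\beta \omega_\gamma \in \Ip$.

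The main obstacle is the case $\gamma > \alpha$ at large $\beta$: high-entropy invariant measures usually carry a nonnegligible amount of mass away from $0$, and this mass is heavily penalized by $-\beta x^\gamma$. The induced-map construction circumvents this by paying entropy on the base $J_0$ while diluting its weight through the possibly long excursions into the neutral region; crucially, the boundedness of $\widetilde{\omega}_\gamma$, which is exactly the summability $\sum x_n^\gamma < \infty$ in disguise, is what ensures the dilution does not spoil the lower bound on $h_{\mu_N} + \beta \il \omega_\gamma \, \dd \mu_N$.
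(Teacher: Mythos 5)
Your proof is correct, and both halves take a genuinely different route from the paper. For item~2 ($\gamma \le \alpha$) the paper works directly from the tree‑pressure formula of Lemma~\ref{l:pressure}: it sets $\xi_n(\beta) = \exp\bigl(-\beta\sum_{j=1}^n x_j^\gamma\bigr)$, notes that $\sum_n \xi_n(\beta) < 1$ for $\beta$ large, and then shows by decomposing $f^{-n}(1)$ into first returns to $J_0$ that the series $\sum_n \sum_{x \in f^{-n}(1)} \exp(\beta S_n(\omega_\gamma)(x))$ converges, hence $P(\beta\omega_\gamma) \le 0$. Your argument instead invokes the \textsc{Margulis}--\textsc{Ruelle} inequality $h_\nu \le \il \log Df\,\dd\nu$ for piecewise~$C^1$ interval maps together with the elementary bound $\log Df(x) \le (\alpha+1)x^\alpha \le (\alpha+1)x^\gamma$; this is shorter and gives an explicit threshold $\beta \ge \alpha+1$, but it imports a nontrivial external result (the Ruelle inequality for piecewise-smooth interval maps) that the paper neither proves nor cites, whereas the paper's computation is self‑contained given Lemmas~\ref{l:neutral-branch} and~\ref{l:pressure}. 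For item~1 ($\gamma > \alpha$) the paper again works combinatorially: it bounds $\sup_Q S_n(\beta\omega_\gamma)$ below on each cylinder $Q$ of $\cP_n$ by $-(|\iota_n(Q)|+1)\zeta$ where $\zeta = \beta\sum_j x_j^\gamma$, counts itineraries, and concludes $P(\beta\omega_\gamma) \ge \log(1 + e^{-\zeta}) > 0$. You instead induce on $J_0$, restrict the first‑return map to the $N$ branches with return time at most $N$, take the uniform Bernoulli measure there, and use the \textsc{Kac}/\textsc{Abramov} formulas to manufacture $f$-invariant measures $\mu_N$ with $h_{\mu_N} + \beta\il\omega_\gamma\,\dd\mu_N > 0$ once $N > e^{\beta C}$. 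Both proofs of item~1 hinge on the same summability $\sum_j x_j^\gamma < \infty$ (your bound $|\widetilde\omega_\gamma| \le 1 + \sum_{j\ge1} x_j^\gamma$ and the paper's $\zeta$ are the same quantity in disguise); the paper's version avoids inducing altogether, which is consistent with its stated policy of not using the inducing scheme, while yours is the more standard thermodynamic‑formalism argument and makes the mechanism of ``diluting the entropy through long excursions'' explicit. Your derivation of the final assertion — that for $\gamma \le \alpha$ the pressure function is positive near $\beta = 0^+$ but vanishes on a half‑line, hence cannot be real‑analytic — is fine, modulo the minor point that you should invoke continuity of $P$ (or openness of $\Ip$ plus $\0 \in \Ip$) rather than just the value $P(\0) = \log 2$, since real‑analyticity is asked only on the open ray $(0, +\infty)$.
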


We give a different proof of this result.
To do this, we introduce some notation.
For each~$j$ in~$\N$, let~$y_j$ be the unique point in~$J_0$ such that~${f(y_j) = x_{j - 1}}$, put ${I_j \= (y_{j + 1}, y_j]}$, and let ${m \: J_0 \to \N}$ be the function so that for each~$j$ in~$\N$ we have ${m^{-1}(j) = I_j}$.
It is \emph{the first return time to~$J_0$}.
The \emph{first return map of~$f$ to~$J_0$}, is the function ${F \: J_0 \to J_0}$ defined by ${F(x) \= f^{m(x)}(x)}$.
Note that for each~$j$ in~$\N$, the map~$F$ maps~$I_j$ diffeomorphically onto~$J_0$.

\begin{proof}[Proof of Proposition~\ref{p:hook}]
  To prove item~1, put
  \begin{equation}
    \label{eq:27}
    \zeta
    \=
    \beta \sum_{j = 0}^{+ \infty} x_j^{\gamma}
  \end{equation}
  and note that this sum is finite by our hypothesis ${\gamma > \alpha}$ and~\eqref{eq:13} in Lemma~\ref{l:neutral-branch}.
  It follows that for all~$k$ in~$\N_0$ and~$y$ in~$[0, x_k]$, we have
  \begin{equation}
    \label{eq:28}
    S_{k + 1}(\beta \omega_\gamma)(y)
    \ge
    S_{k + 1}(\beta \omega_\gamma)(x_k)
    \ge
    -\zeta.
  \end{equation}
  Recall from~\S\ref{ss:lematta} that~$\cP$ is the partition ${\{ [0, x_1], (x_1, 1] \}}$ of~$[0, 1]$ and that for each~$n$ in~$\N$ we denote its $n$-th refinement ${\bigvee_{k = 0}^{n - 1} \cP}$ by~$\cP_n$.
  Let
  \begin{equation}
    \label{eq:29}
    \iota_n \: \cP_n \to \{ 0, 1 \}^{\{ 0, \ldots, n - 1 \}}
  \end{equation}
  be the itinerary map defined in such a way that for all ${Q \in \cP_n}$ and ${j \in \{0, \ldots, n - 1 \}}$ we have ${\iota_n(Q)_j = 1}$ if and only if ${f^j(Q) \subseteq (x_1, 1]}$.
  Note that~$\iota_n$ is a bijection.
  For each~$a_0\cdots a_{n - 1}$ in~$\{ 0, 1 \}^{\{0, \ldots, n - 1 \}}$, put $|a_0 \cdots a_{n - 1}| \= \sum_{j = 0}^{n - 1} a_j$.
  It follows that for every~$Q$ in~$\cP_n$, we have
  \begin{equation}
    \label{eq:30}
    \sup_Q S_n(\beta \omega_\gamma)
    \ge
    - (|\iota_n(Q)| + 1) \zeta.
  \end{equation}
  Hence
  \begin{equation}
    \label{eq:31}
    \begin{split}
      \sum_{Q \in \cP_n} \sup_Q \exp(S_n(\beta \omega_\gamma))
      & \ge
        \exp(- \zeta) \sum_{\underline{a} \in \{0, 1 \}^{\{0, \ldots, n - 1 \}}} \exp(- |\underline{a}| \zeta)
      \\ & =
           \exp(-\zeta) (1 + \exp(- \zeta))^n.
    \end{split}
  \end{equation}
  Taking logarithms, dividing by~$n$, and then taking the limit as ${n \to +\infty}$, we obtain by~\eqref{eq:intermittent-tree} in Lemma~\ref{l:pressure}
  \begin{equation}
    \label{eq:32}
    P(\beta \omega_\gamma)
    \ge
    \log (1 + \exp(-\zeta)) > 0.
  \end{equation}
  Together with Theorem~\ref{t:intermittent-potentials}, this implies that~$\beta \varphi$ is in~$\Ip$.

  To prove item~2, for every~$\beta$ in~$(0, +\infty)$ and each~$n$ in~$\N$ put
  \begin{equation}
    \label{eq:33}
    \xi_n(\beta)
    \=
    \exp \left( -\beta \sum_{j = 1}^n x_j^{\gamma} \right).
  \end{equation}
  In view of~\eqref{eq:13} in Lemma~\ref{l:neutral-branch}, our hypothesis ${\gamma \le \alpha}$ implies that there is~$\beta_0$ in~$(0, +\infty)$ such that for every~$\beta$ in~$[\beta_0, +\infty)$ we have
  \begin{equation}
    \label{eq:34}
    \sum_{j = 1}^{+\infty} \xi_j(\beta)
    <
    1.
  \end{equation}
  Let~$\beta$ in~$[\beta_0, +\infty)$ be given.
  In view of~\eqref{eq:15} in Lemma~\ref{l:pressure}, to prove ${P(\beta \omega_\gamma) = 0}$ it is sufficient to show that the sum
  \begin{equation}
    \label{eq:35}
    \sum_{n = 1}^{+\infty} \sum_{x \in f^{-n}(1)} \exp (\beta S_n(\omega_\gamma)(x))
  \end{equation}
  is finite.
  Note first that for all~$j$ in~$\N$ and~$x$ in~$J_j$, we have
  \begin{equation}
    \label{eq:36}
    \exp (\beta S_j(\omega_\gamma)(x))
    \le
    \exp \left( -\beta \sum_{k = 1}^j x_{k + 1}^\gamma \right)
    =
    \exp(\beta x_1^\gamma) \xi_{j + 1}(\beta).
  \end{equation}
  It follows that~\eqref{eq:35} is bounded from above by
  \begin{equation}
    \label{eq:37}
    \left( 1 + \exp(\beta x_1^\gamma) \sum_{j = 1}^{+\infty} \xi_{j + 1}(\beta) \right) \left( \sum_{n = 1}^{+\infty} \sum_{\substack{x \in f^{-n}(1) \\ x \in J_0}} \exp (\beta S_n(\omega_\gamma)(x)) \right).
  \end{equation}
  The first factor above is finite by~\eqref{eq:34}.
  To prove that the second factor above is finite, note that all~$j$ in~$\N$ and~$y$ in~$I_j$, the point~$f(y)$ is in~$J_{j - 1}$, so by~\eqref{eq:36} we have
  \begin{equation}
    \label{eq:38}
    \exp (\beta S_j(\omega_\gamma)(y))
    \le
    \exp (-\beta y^\gamma) \exp(\beta x_1^\gamma) \xi_j(\beta)
    \le
    \xi_j(\beta).
  \end{equation}
  Let~$x$ be a point of~$f^{-n}(1)$ in~$J_0$ and let~$\ell$ in~$\N$ be the integer such that
  \begin{equation}
    \label{eq:39}
    m(x) + \cdots + m(F^{\ell - 1}(x))
    =
    n.
  \end{equation}
  Applying~\eqref{eq:38} repeatedly, we obtain
  \begin{equation}
    \label{eq:40}
    \exp (\beta S_n(\omega_\gamma)(x))
    \le
    \prod_{k = 0}^{\ell - 1} \xi_{m(F^k(x)}(\beta).
  \end{equation}
  Together with~\eqref{eq:34}, this implies
  \begin{equation}
    \label{eq:41}
    \sum_{n = 1}^{+\infty} \sum_{\substack{x \in f^{-n}(1) \\ x \in J_0}} \exp (\beta S_n(\omega_\gamma)(x))
    \le
    \sum_{\ell = 1}^{+\infty} \left( \sum_{j = 1}^{+\infty} \xi_j(\beta_0) \right)^\ell
    <
    +\infty.
  \end{equation}
  This proves that the second factor in~\eqref{eq:37} is finite and completes the proof of ${P(\beta \omega_\gamma) = 0}$.
  It follows that~$\beta \omega_\gamma$ is outside~$\Ip$ by Theorem~\ref{t:intermittent-potentials}.
  The proof of the lemma is thus complete.
\end{proof}

\begin{lemm}
  \label{l:phases}
  For every~$\gamma$ in~$(0, 1]$, we have
  \begin{equation}
    \label{eq:42}
    \Hg \setminus \Ip
    =
    \{ \varphi \in \Hg \: \delta_0 \text{ is a \textsc{Gibbs} state of~$f$ for the potential } \varphi \}.
  \end{equation}
  Furthermore, this last set is convex and its interior is equal to~$\Sp$.
\end{lemm}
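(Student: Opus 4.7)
The plan is to identify $\Hg \setminus \Ip$ explicitly via the variational principle and then use the convexity of $P$ together with a first-order perturbation argument to describe its interior.

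First, I would note that $0$ is a fixed point of $f$, so $\delta_0 \in \sM$, $h_{\delta_0} = 0$, and $\il \psi \dd \delta_0 = \psi(0)$. The variational principle gives $P(\psi) \ge \psi(0)$, with equality precisely when $\delta_0$ is a \textsc{Gibbs} state of $f$ for the potential $\psi$. Combined with Theorem~\ref{t:intermittent-potentials}, which asserts $\Ip = \{\psi \in \Hg : P(\psi) > \psi(0)\}$, this yields
\[
\Hg \setminus \Ip
= \{\psi \in \Hg : P(\psi) = \psi(0)\}
= \{\psi \in \Hg : \delta_0 \text{ is a \textsc{Gibbs} state of } f \text{ for } \psi\}.
\]
Denote this common set by $\cG_0$. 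For convexity, I would view $\cG_0$ as the zero set of the function $g(\psi) \= P(\psi) - \psi(0)$. Since $P$ is convex and evaluation at $0$ is linear, $g$ is convex; the previous step shows $g \ge 0$, so $\cG_0 = g^{-1}(\inf g)$. The minimum set of a convex function on a vector space is convex, so $\cG_0$ is convex.

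The main step is the equality $\ir(\cG_0) = \Sp$. The inclusion $\Sp \subseteq \ir(\cG_0)$ is immediate from the definition of $\Sp$. For the reverse inclusion, I would take $\varphi \in \ir(\cG_0)$, let $\cU \subseteq \cG_0$ be an open neighborhood of $\varphi$, and show that $\delta_0$ is the unique \textsc{Gibbs} state of $f$ for every $\psi \in \cU$. Fix such a $\psi$ and any \textsc{Gibbs} state $\nu$ of $f$ for $\psi$. For $\eta \in \Hg$ and $|t|$ small enough that $\psi + t \eta \in \cU$, on the one hand $P(\psi + t \eta) = \psi(0) + t \eta(0)$ from $\psi + t \eta \in \cG_0$, and on the other hand, from $\nu$ being a \textsc{Gibbs} state of $f$ for $\psi$,
\[
P(\psi + t \eta)
\ge h_\nu + \il (\psi + t \eta) \dd \nu
= P(\psi) + t \il \eta \dd \nu
= \psi(0) + t \il \eta \dd \nu.
\]
Using both positive and negative small values of $t$ would then yield $\il \eta \dd \nu = \eta(0)$ for every $\eta \in \Hg$. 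Since $\Hg$ is uniformly dense in $C(\R)$, this identity extends to every continuous $\eta$, forcing $\nu = \delta_0$.

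The only subtlety I anticipate is keeping the signs straight in the perturbation step and turning the one-sided inequality $P \ge h_\nu + \il \cdot \dd \nu$ into a genuine linear identity on $\Hg$; this relies on $\cU$ being open (so that $\psi + t \eta$ lies in $\cU$ for both signs of $t$) and on the density of $\Hg$ in $C(\R)$ to conclude $\nu = \delta_0$. Everything else is a direct application of the variational principle, Theorem~\ref{t:intermittent-potentials}, and elementary convex analysis.
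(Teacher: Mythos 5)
Your proof is correct, and the first two steps (identifying $\Hg \setminus \Ip$ via Theorem~\ref{t:intermittent-potentials} and the variational principle, and convexity via convexity of $P$) match the paper's. Where you diverge is the key inclusion $\ir(\cG_0) \subseteq \Sp$. The paper scales a given $\varphi \in \ir(\hSp)$ by a factor $\beta_* \in (0,1)$ close to $1$ so that $\beta_*\varphi$ remains in $\hSp$, giving $P(\beta_*\varphi) = \beta_*\varphi(0)$, and then invokes Corollary~\ref{c:states} (with $\beta = 1$) to get uniqueness of $\delta_0$; that corollary rests on the Key Lemma, a substantial input from the companion paper. You instead run a Gateaux-derivative-style perturbation: for any Gibbs state $\nu$ of a potential $\psi$ in an open neighborhood $\cU \subseteq \cG_0$, the two-sided bound coming from $\psi + t\eta \in \cG_0$ and the variational inequality $P(\psi + t\eta) \ge P(\psi) + t\il\eta\dd\nu$ forces $\il\eta\dd\nu = \eta(0)$ for all $\eta \in \Hg$, and density of $\Hg$ in $C(\R)$ then pins $\nu = \delta_0$. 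Your route is more elementary and self-contained: it uses only convexity of $P$, openness of the neighborhood, and density of H\"older functions, and sidesteps the Key Lemma entirely. The paper's route is shorter given that Corollary~\ref{c:states} is already available and gets reused elsewhere, but your argument would serve just as well here and has the merit of revealing that the conclusion is a soft convex-analytic fact rather than depending on the finer dynamical content of the Key Lemma.
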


\begin{proof}
  Putting ${\hSp \= \Hg \setminus \Ip}$, by Theorem~\ref{t:intermittent-potentials} we have
  \begin{multline}
    \label{eq:43}
    \hSp
    =
    \{ \varphi \in \Hg \: P(\varphi) = \varphi(0) \}
    \\ =
    \{ \varphi \in \Hg \: \delta_0 \text{ is a \textsc{Gibbs} state of~$f$ for the potential } \varphi \}.
  \end{multline}
  In particular, ${\Sp \subseteq \ir(\hSp)}$.
  It also follows that~$\hSp$ is convex, because~$P$ is convex and ${\varphi \mapsto \varphi(0)}$ is linear.

  To complete the proof of the last assertion, let~$\varphi$ be in $\ir(\hSp)$ and let~$\beta_*$ in~$(0, 1)$ be sufficiently close to~$1$ so that~$\beta_* \varphi$ is in~$\hSp$.
  Then, ${P(\beta_* \varphi) = \beta_* \varphi(0)}$ and by Corollary~\ref{c:states} in~\S\ref{ss:lematta} with ${\beta = 1}$ we have that~$\delta_0$ is the unique \textsc{Gibbs} state of~$f$ for the potential~$\varphi$.
  Since this holds for every potential in~$\ir(\hSp)$ and this set is open, we conclude ${\ir(\hSp) \subseteq \Sp}$.
  The proof of the lemma is thus complete.
\end{proof}

\begin{lemm}
  \label{l:stationarization}
  Let~$\gamma$ in~$(0, \min \{ \alpha, 1\}]$ and~$\psi_0$ in ${\Hg \setminus \Ip}$ be given.
  Then, for every~$\tau$ in~$(0, +\infty)$ the potential~$\psi_0 + \tau \omega_\gamma$ is in~$\Sp$.
\end{lemm}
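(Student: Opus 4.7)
The plan is to show directly that $\psi_0 + \tau \omega_\gamma$ belongs to the interior of the set $\{\varphi \in \Hg : P(\varphi) = \varphi(0)\}$; by Lemma~\ref{l:phases} this interior is exactly $\Sp$, so this is what needs to be established.

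First I would verify the pointwise equality $P(\psi_0 + \tau \omega_\gamma) = \psi_0(0)$. By the hypothesis $\psi_0 \in \Hg \setminus \Ip$ and Theorem~\ref{t:intermittent-potentials}, we already have $P(\psi_0) = \psi_0(0)$. Since $\omega_\gamma \le 0$, monotonicity of the pressure gives the upper bound $P(\psi_0 + \tau\omega_\gamma) \le \psi_0(0)$; the matching lower bound follows from $\omega_\gamma(0) = 0$ together with the trivial estimate $P(\varphi) \ge \varphi(0)$, which comes from plugging the invariant measure $\delta_0$ (of zero entropy) into the variational principle.

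The main step is to show that this equality is stable under any perturbation $\eta \in \Hg$ with $\|\eta\|_\gamma < \tau$. Applying the Hölder estimate~\eqref{eq:26} to $-\eta$ yields the pointwise bound $\eta \le \eta(0) - |\eta|_\gamma \omega_\gamma$, so that
\begin{equation*}
\psi_0 + \tau \omega_\gamma + \eta \;\le\; \psi_0 + (\tau - |\eta|_\gamma)\omega_\gamma + \eta(0).
\end{equation*}
Because $|\eta|_\gamma \le \|\eta\|_\gamma < \tau$, the coefficient $\tau' \coloneqq \tau - |\eta|_\gamma$ is strictly positive, and the first step applied with $\tau'$ in place of $\tau$ gives $P(\psi_0 + \tau'\omega_\gamma) = \psi_0(0)$. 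Monotonicity of $P$ then bounds $P(\psi_0 + \tau\omega_\gamma + \eta)$ above by $(\psi_0 + \tau\omega_\gamma + \eta)(0)$, and the universal lower bound $P(\varphi) \ge \varphi(0)$ forces equality.

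Consequently the open $\|\cdot\|_\gamma$-ball of radius $\tau$ about $\psi_0 + \tau\omega_\gamma$ is contained in $\{\varphi \in \Hg : P(\varphi) = \varphi(0)\} = \Hg \setminus \Ip$, so $\psi_0 + \tau\omega_\gamma$ lies in the interior of that set, which is $\Sp$ by Lemma~\ref{l:phases}. There is no real obstacle here; the one conceptual point is to notice that the Hölder seminorm of a small perturbation $\eta$ can be absorbed by shrinking the coefficient of $\omega_\gamma$ by exactly $|\eta|_\gamma$, reducing the perturbed problem to the boundary equality $P = \cdot(0)$ already available along the entire ray $\tau \mapsto \psi_0 + \tau\omega_\gamma$.
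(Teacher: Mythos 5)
Your proof is correct and takes essentially the same route as the paper's: both apply the pointwise estimate~\eqref{eq:26} to the perturbation $\eta$ (the paper calls it $\Delta$) to absorb its Hölder seminorm into the coefficient of $\omega_\gamma$, then use monotonicity of $P$ and the lower bound $P(\varphi)\ge\varphi(0)$ coming from $\delta_0$ to conclude that $\delta_0$ remains a Gibbs state on a whole $\|\cdot\|_\gamma$-ball, and finally invoke Lemma~\ref{l:phases} to identify the interior with $\Sp$. The only superficial difference is that you route through the equality $P(\psi_0 + \tau'\omega_\gamma)=\psi_0(0)$ along the ray as an intermediate step, whereas the paper compares directly with $P(\psi_0+\Delta(0))$; the underlying estimate is identical.
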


\begin{proof}
  Let~$\Delta$ in~$\Hg$ be such that ${|\Delta|_{\gamma} \le \tau}$.
  By~\eqref{eq:26} with ${\psi = -\Delta}$, we have
  \begin{equation}
    \label{eq:44}
    \psi_0 + \tau \omega_{\gamma} + \Delta
    \le
    \psi_0 + \Delta(0).
  \end{equation}
  Together with our hypothesis that~$\psi_0$ is outside~$\Ip$ and Theorem~\ref{t:intermittent-potentials}, this implies
  \begin{multline}
    \label{eq:45}
    P(\psi_0 + \tau \omega_{\gamma} + \Delta)
    \le
    P(\psi_0 + \Delta(0))
    =
    P(\psi_0) + \Delta(0)
    =
    (\psi_0 + \Delta)(0)
    \\ =
    h_{\delta_0} + \il \psi_0 + \Delta \dd \delta_0
    \le
    P(\psi_0 + \tau \omega_{\gamma} + \Delta).
  \end{multline}
  Hence~$\delta_0$ is a \textsc{Gibbs} state for the potential ${\psi_0 + \tau \omega_{\gamma} + \Delta}$.
  In view of Lemma~\ref{l:phases}, we conclude that~$\psi_0 + \tau \omega_{\gamma}$ is in~$\Sp$.
\end{proof}

\begin{proof}[Proof of Proposition~\ref{p:phases}]
  To prove that~$\Ip$ contains~$\0$, let~$\nu_0$ in~$\sM$ be such that ${h_{\nu_0} > 0}$ (Lemma~\ref{l:measurable-soundness}(1)) and note that
  \begin{equation}
    \label{eq:46}
    \0(0)
    =
    0
    <
    h_{\nu_0} + \il \0 \dd \nu_0
    \le
    P(\0).
  \end{equation}
  Together with Theorem~\ref{t:intermittent-potentials}, this implies that~$\0$ is in~$\Ip$.

  To prove~\eqref{eq:22}, suppose ${\alpha < 1}$ and ${\gamma > \alpha}$.
  Let~$\varphi$ in~$\Hg$ be given and observe that by~\eqref{eq:26} with ${\psi = \varphi}$ and Proposition~\ref{p:hook}(1) with ${\beta = | \varphi |_\gamma}$, we have
  \begin{equation}
    \label{eq:47}
    \varphi - \varphi(0)
    \ge
    | \varphi |_\gamma \omega_\gamma
    \text{ and }
    P(\varphi) - \varphi(0)
    =
    P(\varphi - \varphi(0))
    \ge
    P(| \varphi |_\gamma \omega_\gamma)
    >
    0.
  \end{equation}
  Together with Theorem~\ref{t:intermittent-potentials}, this implies that~$\varphi$ is in~$\Ip$.

  Henceforth, suppose ${\gamma \le \alpha}$.
  To prove item~1, recall that~$\Ip$ contains~$\0$ and let~$\varphi$ in~$\Ip$ be given.
  By Theorem~\ref{t:intermittent-potentials}, for every~$\beta$ in~$(0, 1)$ we have
  \begin{equation}
    \label{eq:48}
    P(\beta \varphi)
    =
    \sup_{\nu \in \sM} \left\{ h_\nu + \beta \il \varphi \dd \nu \right\}
    \ge
    \beta P(\varphi)
    >
    \beta \varphi(0).
  \end{equation}
  Using Theorem~\ref{t:intermittent-potentials} again, we conclude that~$\beta \varphi$ is in~$\Ip$.

  To prove item~2, note that by Proposition~\ref{p:hook}(2) there is~$\beta_*$ in~$(0, +\infty)$ such that~$\beta_* \omega_\gamma$ is outside~$\Ip$.
  Combined with Lemma~\ref{l:stationarization} with ${\psi_0 = \beta_* \omega_\gamma}$, this implies that~$\Sp$ is unbounded and therefore nonempty.
  That~$\Sp$ is convex and the last assertion of item~2, both follow from Lemma~\ref{l:phases}.

  To prove item~3, first note that~$\Sp$ is open by definition, $\Ip$ is open by Theorem~\ref{t:intermittent-potentials}, and~$\Ip$ and~$\Sp$ are disjoint by definition.
  On the other hand, $\Sp$ is equal to the interior of ${\Hg \setminus \Ip}$ by Lemma~\ref{l:phases}.
  This implies that~$\Sp$ is a regular open set, that
  \begin{equation}
    \label{eq:49}
    \partial \Ip
    =
    \Hg \setminus (\Ip \cup \Sp),
  \end{equation}
  and that ${\Ip \cup \Sp}$ is dense in~$\Hg$.
  To prove the remaining assertions, we use that the set ${\Hg \setminus \Ip}$ is convex Lemma~\ref{l:phases} and that its interior~$\Sp$ is nonempty.
  It follows that ${\Hg \setminus \Ip}$ is equal to the closure of its interior and, as a consequence, the open set~$\Ip$ is regular and ${\partial \Ip = \partial \Sp}$.
  This completes the proof of item~3 and of the proposition.
\end{proof}

\subsection{The phase transition locus}
\label{ss:pt-locus}

In this section we prove Theorem~\ref{t:pt-locus}.
The proof is given at the end of this section, after a couple of lemmas.

\begin{lemm}
  \label{l:pt-locus}
  Let~$\gamma$ in~$(0, \min \{ \alpha, 1\}]$ and~$\varphi_0$ in~$\Hg$ be given.
  Then, there is~$\tau_0$ in~$\R$ such that for every~$\tau$ in~$\R$ we have
  \begin{equation}
    \label{eq:50}
    \varphi_0 + \tau \omega_\gamma
    \in
    \begin{cases}
      \Ip
      & \text{if } \tau < \tau_0;
      \\
      \PTg
      & \text{if } \tau = \tau_0;
      \\
      \Sp
      & \text{if } \tau > \tau_0.
    \end{cases}
  \end{equation}
\end{lemm}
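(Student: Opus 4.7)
The plan is to study the function $g \colon \R \to \R$ defined by $g(\tau) \= P(\varphi_0 + \tau \omega_\gamma) - \varphi_0(0)$ and read off the phases along the line $\tau \mapsto \varphi_0 + \tau \omega_\gamma$ from its sign, since by Theorem~\ref{t:intermittent-potentials} the membership $\varphi_0 + \tau \omega_\gamma \in \Ip$ is equivalent to $g(\tau) > 0$. Four elementary properties of $g$ will carry the whole argument: it is continuous (because $P$ is \textsc{Lipschitz}), convex (because $P$ is convex and $\tau \mapsto \varphi_0 + \tau \omega_\gamma$ is affine), non-increasing (because $\omega_\gamma \le 0$ and $P$ is pointwise monotone), and nonnegative (because $\omega_\gamma(0) = 0$ combined with the trivial lower bound $P(\psi) \ge \il \psi \dd \delta_0$ gives $P(\varphi_0 + \tau \omega_\gamma) \ge \varphi_0(0)$).

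Next I would pin down the boundary behavior. For $\tau$ very negative, pick any $\nu_0 \in \sM$ with $h_{\nu_0} > 0$, such as the measure of maximal entropy $\nu_{\max}$ from Lemma~\ref{l:measurable-soundness}(2); since $h_{\nu_0} > 0$ forces $\nu_0 \ne \delta_0$ and $\omega_\gamma < 0$ outside $\{0\}$, we have $\il \omega_\gamma \dd \nu_0 < 0$, so the variational lower bound $g(\tau) \ge h_{\nu_0} + \il \varphi_0 \dd \nu_0 - \varphi_0(0) + \tau \il \omega_\gamma \dd \nu_0$ diverges to $+\infty$ as $\tau \to -\infty$. For $\tau$ very large, apply~\eqref{eq:26} to $-\varphi_0$ to obtain the pointwise domination $\varphi_0 \le \varphi_0(0) - |\varphi_0|_\gamma \omega_\gamma$, whence $\varphi_0 + \tau \omega_\gamma \le \varphi_0(0) + (\tau - |\varphi_0|_\gamma) \omega_\gamma$; combining monotonicity of $P$, the shift identity $P(c + \psi) = c + P(\psi)$, and Proposition~\ref{p:hook}(2) then forces $g(\tau) \le 0$, hence $g(\tau) = 0$, for every sufficiently large $\tau$. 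Convexity, monotonicity, continuity, and nonnegativity of $g$ together imply that the zero set of $g$ is of the form $[\tau_0, +\infty)$ for some finite $\tau_0 \in \R$.

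It remains to identify the phase of each $\varphi_0 + \tau \omega_\gamma$. For $\tau < \tau_0$, $g(\tau) > 0$ gives $\varphi_0 + \tau \omega_\gamma \in \Ip$ by Theorem~\ref{t:intermittent-potentials}. For $\tau > \tau_0$, the potential $\psi_0 \= \varphi_0 + \tau_0 \omega_\gamma$ lies in $\Hg \setminus \Ip$ by continuity of $g$, so Lemma~\ref{l:stationarization} applied to this $\psi_0$ and the positive shift $\tau - \tau_0$ places $\varphi_0 + \tau \omega_\gamma = \psi_0 + (\tau - \tau_0) \omega_\gamma$ in $\Sp$. For $\tau = \tau_0$, we have $\varphi_0 + \tau_0 \omega_\gamma \notin \Ip$; if it also belonged to the open set $\Sp$, then a small perturbation $\varphi_0 + \tau \omega_\gamma$ with $\tau < \tau_0$ would still lie in $\Sp \subseteq \Hg \setminus \Ip$, contradicting the first case. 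Hence $\varphi_0 + \tau_0 \omega_\gamma \in \Hg \setminus (\Ip \cup \Sp) = \PTg$ by~\eqref{eq:6}.

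The only mildly delicate step I anticipate is the upper bound on $g$ for large $\tau$: one must dominate the arbitrary \textsc{H{\"o}lder} potential $\varphi_0$ by the model potential $\omega_\gamma$ via the dual form of~\eqref{eq:26} and then quote the pressure vanishing in Proposition~\ref{p:hook}(2). Everything else is convex-analytic bookkeeping on the one-variable function $g$.
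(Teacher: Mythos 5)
Your proof is correct and follows essentially the same route as the paper's: both reduce the lemma to the one‑variable function $g(\tau) = P(\varphi_0 + \tau\omega_\gamma) - \varphi_0(0)$, establish that $g$ is nonincreasing and strictly positive for $\tau$ very negative (you via the positive-entropy measure $\nu_{\max}$, the paper via $\delta_1$), force $g$ to vanish for $\tau$ large using the domination $\varphi_0 \ge \varphi_0(0) + |\varphi_0|_\gamma\omega_\gamma$ together with Proposition~\ref{p:hook}(2), and then read off the three phases via Theorem~\ref{t:intermittent-potentials} and Lemma~\ref{l:stationarization}. Your explicit openness argument for $\tau = \tau_0$ and the extra mention of convexity are harmless embellishments of the paper's slightly terser phrasing.
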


\begin{proof}
  For each~$\tau$ in~$\R$ put ${\psi_\tau \= \varphi_0 + \tau \omega_\gamma}$ and let ${g \: \R \to [0, +\infty)}$ be defined by
  \begin{equation}
    \label{eq:51}
    g(\tau)
    \=
    P(\psi_\tau) - \psi_\tau(0).
  \end{equation}
  Using ${\omega_\gamma \le 0}$ and ${\omega_\gamma(0) = 0}$, for all~$\tau$ and~$\tau'$ in~$\R$ satisfying ${\tau \le \tau'}$ we obtain
  \begin{equation}
    \label{eq:52}
    \psi_\tau
    \ge
    \psi_{\tau'}
    \text{ and }
    g(\tau)
    =
    P(\psi_\tau) - \psi_\tau(0)
    \ge
    P(\psi_{\tau'}) - \psi_\tau(0)
    =
    g(\tau').
  \end{equation}
  In particular, $g$ is nonincreasing.
  On the other hand, using that~$\delta_1$ is in~$\sM$ we obtain that for every sufficiently negative~$\tau$ in~$\R$ we have
  \begin{equation}
    \label{eq:53}
    g(\tau)
    \ge
    \il \psi_\tau \dd \delta_1 - \psi_\tau(0)
    =
    \varphi_0(1) - \tau - \varphi_0(0)
    >
    0.
  \end{equation}
  Our next step is to show that~$g$ attains the value~$0$.
  To do this, let~$\beta_0$ in~$(0, +\infty)$ be such that ${P(\beta_0 \omega_\gamma) = 0}$ (Proposition~\ref{p:hook}(2)) and put ${\tau_\dag \= \beta_0 + |\varphi_0|_\gamma}$.
  By~\eqref{eq:26} with ${\psi = - \varphi_0}$, we have
  \begin{equation}
    \label{eq:54}
    \psi_{\tau_\dag}
    =
    \varphi_0 + (\beta_0 + |\varphi_0|_\gamma) \omega_\gamma
    \le
    \beta_0 \omega_\gamma + \varphi_0(0)
  \end{equation}
  and therefore
  \begin{equation}
    \label{eq:55}
    P(\psi_{\tau_\dag})
    \le
    P(\beta_0 \omega_\gamma + \varphi_0(0))
    =
    P(\beta_0 \omega_\gamma) + \varphi_0(0)
    =
    \varphi_0(0)
    =
    h_{\delta_0} + \il \psi_{\tau_\dag} \dd \delta_0
    \le
    P(\psi_{\tau_\dag}).
  \end{equation}
  This implies ${g(\tau_\dag) = 0}$, as wanted.
  It follows that there is~$\tau_0$ in~$\R$ such that~$g$ is strictly positive on~$(-\infty, \tau_0)$ and vanishes identically on~$[\tau_0, +\infty)$.

  In view of Theorem~\ref{t:intermittent-potentials} and the definition of~$\PTg$, we conclude that for every~$\tau$ in~$(-\infty, \tau_0)$ the potential~$\psi_\tau$ is in~$\Ip$ and that for ${\tau = \tau_0}$ the potential~$\psi_{\tau_0}$ is in~$\PTg$.
  Using Lemma~\ref{l:stationarization} with ${\psi_0 = \psi_{\tau_0}}$, we obtain that for every~$\tau$ in~$(\tau_0, +\infty)$ the potential~${\psi_\tau}$ is in~$\Sp$.
\end{proof}

\begin{lemm}
  \label{l:singular-family}
  Let~$\gamma$ be in~$(0, \min \{ \alpha, 1 \}]$, let~$I$ be an open interval of~$\R$, and let~$(\varphi_\tau)_{\tau \in I}$ a real-analytic family of potentials in~$\Hg$.
  Suppose that there is a parameter~$\tau_0$ in~$I$, such that for every~$\tau$ in~$I$ the potential~$\varphi_\tau$ is in~$\Ip$ if ${\tau < \tau_0}$ and it is outside~$\Ip$ if ${\tau \ge \tau_0}$.
  Then, ${\tau \mapsto P(\varphi_\tau)}$ fails to be real-analytic at~$\tau_0$.
\end{lemm}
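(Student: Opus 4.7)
The plan is to combine Theorem~\ref{t:intermittent-potentials}, which characterizes $\Ip$ as $\{\varphi \in \Hg \: P(\varphi) > \varphi(0)\}$, with the identity principle for real-analytic functions.

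First, I would introduce the auxiliary function $g \: I \to [0, +\infty)$ given by
\begin{equation*}
  g(\tau) \= P(\varphi_\tau) - \varphi_\tau(0).
\end{equation*}
By Theorem~\ref{t:intermittent-potentials}, the hypothesis on the family $(\varphi_\tau)_{\tau \in I}$ translates into the statement that $g(\tau) > 0$ for every $\tau$ in $I \cap (-\infty, \tau_0)$ and $g(\tau) = 0$ for every $\tau$ in $I \cap [\tau_0, +\infty)$.

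Next, I would verify that $\tau \mapsto \varphi_\tau(0)$ is real-analytic at $\tau_0$. Point evaluation $\varphi \mapsto \varphi(0)$ is a bounded linear functional on $\HgC$, hence holomorphic in the sense of~\S\ref{ss:analiticity}: indeed, for any holomorphic $H \: \Delta \to \HgC$, the convergence in $\| \cdot \|_\gamma$ of the difference quotients forces pointwise convergence at $0$. Composing with the holomorphic extension of $\tau \mapsto \varphi_\tau$ to a neighborhood of $\tau_0$ in $\C$ gives the required real-analytic extension of $\tau \mapsto \varphi_\tau(0)$.

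Now I would argue by contradiction: suppose $\tau \mapsto P(\varphi_\tau)$ is real-analytic at $\tau_0$. Then $g$ is real-analytic on some open interval $I_0 \subseteq I$ containing $\tau_0$, and hence extends to a holomorphic function on a complex neighborhood of $I_0$. Since $g$ vanishes on $I_0 \cap [\tau_0, +\infty)$, which has $\tau_0$ as an accumulation point, the Identity Principle forces $g$ to vanish identically on $I_0$. This contradicts the strict positivity of $g$ on $I_0 \cap (-\infty, \tau_0)$, completing the proof.

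There is no substantial obstacle: the only point that requires care is the real-analyticity of $\tau \mapsto \varphi_\tau(0)$, and this reduces to the elementary observation that a continuous linear functional on the Banach space $\HgC$ preserves the notion of holomorphy adopted in~\S\ref{ss:analiticity}.
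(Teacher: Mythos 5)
Your proposal is correct and follows essentially the same route as the paper: introduce the auxiliary function $P(\varphi_\tau) - \varphi_\tau(0)$, use Theorem~\ref{t:intermittent-potentials} to show it is strictly positive for $\tau < \tau_0$ and identically zero for $\tau \ge \tau_0$, and conclude non-analyticity from the real-analyticity of point evaluation $\varphi \mapsto \varphi(0)$. You merely spell out the Identity Principle step and the analyticity of the evaluation functional, both of which the paper treats as immediate.
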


\begin{proof}
  Let ${p \: I \to [0, +\infty)}$ be defined by
  \begin{equation}
    \label{eq:56}
    p(\tau)
    \=
    P(\varphi_\tau) - \varphi_\tau(0).
  \end{equation}
  In view of Theorem~\ref{t:intermittent-potentials}, our hypotheses imply that~$p$ is strictly positive on~$(-\infty, \tau_0)$ and that it vanishes identically on~$[\tau_0, +\infty)$.
  In particular, $p$ fails to be real-analytic at~$\tau_0$.
  Since the function ${\varphi \mapsto \varphi(0)}$ is real-analytic on~$\Hg$, it follows that ${\tau \mapsto P(\varphi_\tau)}$ also fails to be real-analytic at~$\tau_0$.
\end{proof}

\begin{proof}[Proof of Theorem~\ref{t:pt-locus}]
  The pressure function~$P$ is real-analytic on~$\Ip$ (Theorem~\ref{t:intermittent-potentials}) and it coincides with the real-analytic function ${\varphi \mapsto \varphi(0)}$ on~$\Sp$.
  It is thus sufficient to show that for every~$\varphi_0$ in~$\PTg$ the pressure function~$P$ fails to be real analytic at~$\varphi_0$.
  To do this, consider the real-analytic family~$(\varphi_0 + \tau \omega_{\gamma})_{\tau \in \R}$ of potentials in~$\Hg$.
  By Lemma~\ref{l:pt-locus} there is~$\tau_0$ in~$\R$ such that for every~$\tau$ in~$\R$ the potential~$\varphi_\tau$ is in~$\Ip$ if ${\tau < \tau_0}$ and it is outside~$\Ip$ if ${\tau \ge \tau_0}$.
  Then the function ${\tau \mapsto P(\varphi_\tau)}$ fails to be real-analytic at~$\tau_0$ by Lemma~\ref{l:singular-family} and therefore the pressure function~$P$ fails to be real-analytic at~$\varphi_0$.
\end{proof}

\subsection{Receding from zero temperature}
\label{ss:receding-to-zero-temperature}
In this section, we relate the geometry at infinity of the stationary phase and the phase transition locus to the set potentials for which~$\delta_0$ is a ground state of~$f$.
This is summarized in Proposition~\ref{p:receding-to-zero-temperature} below.

Let~$V$ be a real vector space.
A subset~$C$ of~$V$ is a \emph{cone}, if for all~$v$ in~$C$ and~$\beta$ in~$(0, +\infty)$ the vector~$\beta v$ is in~$C$.
In this case, $C$ is \emph{convex} if for all~$v$ and~$v'$ in~$C$ the vector ${v + v'}$ is in~$C$ and it is \emph{salient} if ${C \cap -C}$ is reduced to the zero vector of~$V$.
On the other hand, the \emph{recession cone} of a subset~$S$ of~$V$, is the convex cone~$\rec(S)$ defined by
\begin{equation}
  \label{eq:57}
  \rec(S)
  \=
  \{ v \in V \: \text{ for all~$s$ in~$S$ and~$t$ in~$(0, +\infty)$, } s + t v \in S \}.
\end{equation}
Moreover, the \emph{lineality space of~$S$} is the vector subspace ${\rec(S) \cap -\rec(S)}$ of~$V$.

Recall from~\eqref{eq:8} that for each~$\gamma$ in~$(0, 1]$ we denote by~$\Sc$ the set of potentials in~$\Hg$ for which~$\delta_0$ is a ground state of~$f$.
We also consider the closed vector subspace~$\Cs$ of~$\Hg$, defined by
\begin{equation}
  \label{eq:58}
  \Cs
  \=
  \left\{ \varphi \in \Hg \: \text{ for every~$\nu$ in~$\sM$, } \il \varphi \dd \nu = \varphi(0) \right\}.
\end{equation}

\begin{prop}
  \label{p:receding-to-zero-temperature}
  For every~$\gamma$ in~$(0, \min \{ \alpha, 1\}]$, the following properties hold.
  \begin{enumerate}
  \item
    We have
    \begin{equation}
      \label{eq:59}
      \rec(\Sp)
      =
      \rec(\Sp \cup \PTg)
      =
      \Sc.
    \end{equation}
  \item
    The lineality spaces of~$\Sp$ and~${\Sp \cup \PTg}$ are both equal to~$\Cs$.
    Moreover, this vector subspace of~$\Hg$ is of infinite co-dimension.
  \end{enumerate}
\end{prop}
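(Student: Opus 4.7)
The plan rests on the identification $\Sp \cup \PTg = \Hg \setminus \Ip = \{\varphi \in \Hg : P(\varphi) = \varphi(0)\}$, obtained by combining~\eqref{eq:6} with Theorem~\ref{t:intermittent-potentials}. For item~$1$ I will establish the four inclusions $\Sc \subseteq \rec(\Sp)$, $\Sc \subseteq \rec(\Sp \cup \PTg)$, $\rec(\Sp) \subseteq \Sc$, and $\rec(\Sp \cup \PTg) \subseteq \Sc$, which together give the two identities $\rec(\Sp) = \rec(\Sp \cup \PTg) = \Sc$.

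Given $\psi \in \Sc$ and $s \in \Hg$ with $P(s) = s(0)$, the variational principle combined with $\il \psi\, d\nu \le \psi(0)$ for every $\nu \in \sM$ yields $h_\nu + \il(s + t\psi)\, d\nu \le (s + t\psi)(0)$ for all $t > 0$; together with the opposite bound from $\delta_0$ this forces $P(s + t\psi) = (s + t\psi)(0)$, proving $\Sc \subseteq \rec(\Sp \cup \PTg)$. To sharpen this to $\Sc \subseteq \rec(\Sp)$, fix $\varphi \in \Sp$ and choose $\varepsilon > 0$ so that every $\varphi'$ with $\|\varphi' - \varphi\|_\gamma < \varepsilon$ lies in $\Sp$; applying the preceding computation to $s = \varphi + \Delta$ for every $\Delta$ of $\|\cdot\|_\gamma$-norm less than~$\varepsilon$ shows that $\varphi + t\psi + \Delta$ is a potential~$\varphi'$ with $P(\varphi') = \varphi'(0)$, so $\varphi + t\psi$ lies in the interior of $\Hg \setminus \Ip$, which equals $\Sp$ by Lemma~\ref{l:phases}. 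Conversely, to prove $\rec(\Sp) \subseteq \Sc$ and $\rec(\Sp \cup \PTg) \subseteq \Sc$, let $\psi$ lie in either recession cone and fix any $\varphi \in \Sp$, nonempty by Proposition~\ref{p:phases}(2); in both cases $\varphi + t\psi$ remains in $\Sp \cup \PTg$ for every $t > 0$, so $P(\varphi + t\psi) = (\varphi + t\psi)(0)$. Dividing the variational inequality $h_\nu + \il(\varphi + t\psi)\, d\nu \le (\varphi + t\psi)(0)$ by $t$ and letting $t \to +\infty$ yields $\il \psi\, d\nu \le \psi(0)$, i.e.\ $\psi \in \Sc$.

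For item~$2$, the lineality spaces of both $\Sp$ and $\Sp \cup \PTg$ equal $\Sc \cap (-\Sc)$, which by item~$1$ and the very definition of $\Sc$ consists precisely of those $\psi \in \Hg$ satisfying $\il \psi\, d\nu = \psi(0)$ for every $\nu \in \sM$---that is, $\Cs$. For the infinite codimension, I will show that for each $N \in \N$ there exist $N$ continuous linear functionals on $\Hg$ that vanish on $\Cs$ and are linearly independent. For each periodic orbit $O$ of $f$ of period at least~$2$ (such an orbit cannot contain the fixed point~$0$), write $\nu_O$ for the uniform invariant probability measure on $O$ and set $L_O(\psi) \= \il \psi\, d\nu_O - \psi(0)$; each such $L_O$ is continuous on $\Hg$ and vanishes on $\Cs$. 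The \textsc{Bernoulli} representation of Lemma~\ref{l:measurable-soundness}(2) supplies infinitely many such orbits, so for each $N$ one can pick pairwise disjoint $O_1, \ldots, O_N$. Since $\{0\} \cup O_1 \cup \cdots \cup O_N$ is finite, for each $k \le N$ one can pick $p_k \in O_k$ and $r_k > 0$ so that the \textsc{H{\"o}lder} bump $\psi_k(x) \= \max(0, 1 - |x - p_k|/r_k)$ lies in $\Hg$, satisfies $\psi_k(p_k) = 1$, and vanishes identically on $\{0\} \cup \bigcup_{j \le N,\, j \ne k} O_j$. The resulting $N \times N$ matrix $(L_{O_i}(\psi_j))_{1 \le i, j \le N}$ is then diagonal with nonzero diagonal entries, forcing $L_{O_1}, \ldots, L_{O_N}$ to be linearly independent modulo~$\Cs$; letting $N \to +\infty$ gives the infinite codimension of $\Cs$ in $\Hg$.

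The main technical point is the openness step in $\Sc \subseteq \rec(\Sp)$, where one must upgrade membership in the closed set $\{\varphi \in \Hg : P(\varphi) = \varphi(0)\}$ to membership in its interior~$\Sp$. This works because $\psi \in \Sc$ is a pointwise condition on measures and both the pressure and evaluation at~$0$ enter the perturbation~$\Delta$ affinely, so the variational bound propagates uniformly over a $\|\cdot\|_\gamma$-ball with no spectral or distortion input beyond Theorem~\ref{t:intermittent-potentials} and Lemma~\ref{l:phases}.
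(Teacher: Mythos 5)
Your proposal is correct and follows essentially the same route as the paper: both arguments are built on the identity $\Sp\cup\PTg=\Hg\setminus\Ip=\{\varphi:P(\varphi)=\varphi(0)\}$, establish $\Sc\subseteq\rec(\Sp)$ via an openness/translation argument and the reverse inclusion by dividing the variational inequality by $t$, identify the lineality space with $\Sc\cap(-\Sc)=\Cs$, and exhibit infinite codimension through the functionals $\varphi\mapsto\varphi(0)-\il\varphi\,\dd\nu$ attached to pairwise disjoint periodic orbits. The only cosmetic differences are that the paper bounds $P(\varphi_0+\varphi)$ through a Gibbs state rather than the uniform variational bound, and asserts linear independence from disjoint supports where you spell out an explicit diagonal bump-function matrix.
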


\begin{proof}
  To prove item~1, we begin showing
  \begin{equation}
    \label{eq:60}
    \Sc \subseteq \rec(\Sp).
  \end{equation}
  Let~$\varphi$ in~$\Sc$ and~$\varphi_0$ in~$\Sp$ be given and let~$\nu_0$ be a \textsc{Gibbs} state of~$f$ for the potential ${\varphi_0 + \varphi}$.
  Then we have
  \begin{equation}
    \label{eq:61}
    P(\varphi_0 + \varphi)
    =
    h_{\nu_0} + \il \varphi_0 + \varphi \dd \nu_0
    \le
    P(\varphi_0) + \il \varphi \dd \nu_0
    \le
    P(\varphi_0) + \varphi(0)
    =
    (\varphi_0 + \varphi)(0)
  \end{equation}
  and therefore~${\varphi_0 + \varphi}$ is outside~$\Ip$ by Theorem~\ref{t:intermittent-potentials}.
  Since this holds for every~$\varphi_0$ in~$\Sp$, we conclude
  \begin{equation}
    \label{eq:62}
    \Sp + \varphi
    \subseteq
    \Hg \setminus \Ip.
  \end{equation}
  Using that~$\Sp$ is open and Proposition~\ref{p:phases}(3), we obtain
  \begin{equation}
    \label{eq:63}
    \Sp + \varphi
    \subseteq
    \ir(\Hg \setminus \Ip)
    =
    \Sp.
  \end{equation}
  Since~$\Sp$ is convex by Proposition~\ref{p:phases}(2), this yields that~$\varphi$ is in~$\rec(\Sp)$ and proves~\eqref{eq:60}.

  The inclusion ${\rec(\Sp) \subseteq \rec(\Sp \cup \PTg)}$ being trivial, to complete the proof of item~1 it is sufficient to show
  \begin{equation}
    \label{eq:64}
    \rec(\Sp \cup \PTg)
    \subseteq
    \Sc.
  \end{equation}
  To do this, let~$\varphi$ in~$\rec(\Sp \cup \PTg)$ be given, let~$\nu_{\dag}$ in~$\sM$ be such that
  \begin{equation}
    \label{eq:65}
    \il \varphi \dd \nu_{\dag}
    =
    \sup_{\nu \in \sM} \il \varphi \dd \nu,
  \end{equation}
  and choose an arbitrary~$\varphi_\S$ in~$\Sp$.
  Then, for every~$t$ in~$(0, +\infty)$ the potential ${\varphi_\S + t \varphi}$ is in~$\Sp \cup \PTg$.
  Together with Proposition~\ref{p:phases}(3) and Theorem~\ref{t:intermittent-potentials}, this implies
  \begin{equation}
    \label{eq:66}
    (\varphi_\S + t \varphi)(0)
    =
    P(\varphi_\S + t \varphi)
    \ge
    \il \varphi_\S + t \varphi \dd \nu_{\dag}
    =
    \il \varphi_\S \dd \nu_{\dag} + t \sup_{\nu \in \sM} \il \varphi \dd \nu.
  \end{equation}
  Dividing by~$t$ and rearranging, we obtain
  \begin{equation}
    \label{eq:67}
    \varphi(0)
    \ge
    \sup_{\nu \in \sM} \il \varphi \dd \nu - \frac{1}{t} \varphi_\S(0) + \frac{1}{t} \il \varphi_\S \dd \nu_{\dag}
    \ge
    \sup_{\nu \in \sM} \il \varphi \dd \nu - \frac{2}{t} \left\| \varphi_\S \right\|.
  \end{equation}
  Taking the limit as ${t \to +\infty}$, we conclude that~$\varphi$ is in~$\Sc$.
  This completes the proof of~\eqref{eq:64} and therefore of~\eqref{eq:59} and item~1.

  To prove item~2, note first that for every~$\varphi$ in ${\Sc \cap - \Sc}$ we have
  \begin{equation}
    \label{eq:68}
    \sup_{\nu \in \sM} \il \varphi \dd \nu
    =
    \varphi(0)
    =
    \inf_{\nu \in \sM} \il \varphi \dd \nu,
  \end{equation}
  so for every~$\nu$ in~$\sM$ we have ${\il \varphi \dd \nu = \varphi(0)}$ and therefore~$\varphi$ is in~$\Cs$.
  Conversely, every~$\varphi$ in~$\Cs$ satisfies~\eqref{eq:68} and is therefore in ${\Sc \cap -\Sc}$.

  To prove the last statement of item~2, denote by~$\sM'$ the infinite subset of~$\sM$ of those measures supported on a single periodic orbit of~$f$.
  Moreover, for each~$\nu$ in~$\sM'$ let~$\ell_{\nu}$ be the linear functional of~$\Hg$ defined by
  \begin{equation}
    \label{eq:69}
    \ell_{\nu}(\varphi)
    \=
    \varphi(0) - \il \varphi \dd \nu.
  \end{equation}
  Then
  \begin{equation}
    \label{eq:70}
    \Cs
    \subseteq
    \bigcap_{\nu \in \sM'} \Ker(\ell_{\nu}).
  \end{equation}
  On the other hand, the set of linear functionals ${\{ \ell_{\nu} \: \nu \in \sM' \}}$ is linearly independent, because the supports of the measures in~$\sM'$ are pairwise disjoint.
  This implies that the right side of~\eqref{eq:70} has infinite co-dimension in~$\Hg$ and therefore that the same holds for~$\Cs$.
  This completes the proof of item~2 and of the proposition.
\end{proof}

\subsection{Structure of the phase transition locus}
\label{ss:proof-pt-locus-structure}

This section establishes fundamental properties of the phase transition locus.
These are summarized in the proposition below, from which we derive Theorem~\ref{t:pt-locus-structure} at the end of this section.

Note that for every~$\gamma$ in~$(0, \min \{ \alpha, 1 \}]$, the \textsc{Gibbs} states of~$f$ for~2 potentials that differ by an element of~$\Cs$, coincide.
Thus, each of the sets~$\Ip$, $\Sp$, and~$\PTg$ is invariant under translations by elements of~$\Cs$.
In particular, $\PTg$ contains a translate of~$\Cs$.

\begin{prop}
  \label{p:pt-locus-structure}
  For every~$\gamma$ in~$(0, \min \{ \alpha, 1 \}]$, the following properties hold.
  \begin{enumerate}
  \item
    The phase transition locus~$\PTg$ is linear homeomorphic to the graph of a real-valued convex function.
    More precisely, there is a co-dimension~1 vector subspace~$\cK$ of~$\Hg$, a convex function ${\tau \: \cK \to (0, +\infty)}$, and a linear homeomorphism from~$\Hg$ onto~$\cK \times \R$ that is the identity on~$\cK$ and that maps~$\PTg$ onto the graph of~$\tau$.
  \item
    Every affine space contained in ${\PTg \cup \Sp}$ is a translate of a vector subspace of~$\Cs$ and it is of infinite co-dimension in~$\Hg$.
  \end{enumerate}
\end{prop}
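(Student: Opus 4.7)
The plan for item~1 is to use $\omega_\gamma$ as a coordinate direction transverse to the phase transition locus and to realize $\PTg$ as the graph of a function over a carefully chosen hyperplane. By Lemma~\ref{l:pt-locus}, for every $\varphi \in \Hg$ there is a unique scalar $t_*(\varphi) \in \R$ with $\varphi + t_*(\varphi)\omega_\gamma \in \PTg$, and the line $\varphi + \R\omega_\gamma$ is in $\Ip$, in $\PTg$, or in $\Sp$ according as the parameter is less than, equal to, or greater than $t_*(\varphi)$. The function $t_*\:\Hg \to \R$ so obtained satisfies the translation identity $t_*(\varphi + s\omega_\gamma) = t_*(\varphi) - s$. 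Since $\Sp$ is open and convex by Proposition~\ref{p:phases}(2,3), its closure $\Sp \cup \PTg$ is also convex, and taking convex combinations of the points $\varphi_i + t_*(\varphi_i)\omega_\gamma \in \PTg$ shows $t_*$ is convex. Because $t_* < 0$ on the nonempty open set $\Sp$, the convex function $t_*$ is locally bounded above, hence continuous on the \textsc{Banach} space $\Hg$.

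The next step is to produce a continuous linear functional $L \: \Hg \to \R$ whose kernel is a hyperplane contained in $\Ip$. Since $\Sp \cup \PTg$ is a nonempty closed convex set not containing~$\0$ (as $\0 \in \Ip$ by Proposition~\ref{p:phases}), the second geometric form of \textsc{Hahn}--\textsc{Banach} supplies $L$ and $c > 0$ with $L(\0) = 0$ and $L \ge c$ on $\Sp \cup \PTg$; set $\cK \= \ker L$. Proposition~\ref{p:hook}(2) gives some positive multiple $\beta \omega_\gamma$ in $\Sp \cup \PTg$, so $L(\omega_\gamma) \ge c/\beta > 0$, yielding the direct sum decomposition $\Hg = \cK \oplus \R\omega_\gamma$. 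Setting $\rho(\varphi) \= L(\varphi)/L(\omega_\gamma)$ and $\pi(\varphi) \= \varphi - \rho(\varphi)\omega_\gamma$, the linear homeomorphism $\varphi \mapsto (\pi(\varphi), \rho(\varphi))$ is the identity on $\cK$, and by the translation identity it sends $\PTg$ to the graph of $\tau \= t_*|_\cK$. Since $\cK \subseteq \Ip$, the function $\tau$ takes strictly positive values, and it is convex as the restriction of the convex function $t_*$ to a vector subspace.

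For item~2, let $A = \varphi_0 + V \subseteq \Sp \cup \PTg$ be an affine subspace. For every $v \in V$ the full line $\varphi_0 + \R v$ lies in the convex set $\Sp \cup \PTg$, so both $v$ and $-v$ belong to its recession cone, which equals $\Sc$ by Proposition~\ref{p:receding-to-zero-temperature}(1). Hence $V$ is contained in the lineality space $\Sc \cap (-\Sc) = \Cs$, which has infinite co-dimension in $\Hg$ by Proposition~\ref{p:receding-to-zero-temperature}(2). The main obstacle in the plan is the hyperplane-construction step: since $\Ip$ is only star-convex at~$\0$ and not convex, one cannot apply \textsc{Hahn}--\textsc{Banach} to $\Ip$ itself, and the key move is to separate the single point~$\{\0\}$ from the closed convex set $\Sp \cup \PTg$ instead; the positivity of $\tau$ and the nonvanishing of $L(\omega_\gamma)$ both fall out of this separation once one uses that $\Sp \cup \PTg$ is unbounded in the $\omega_\gamma$ direction.
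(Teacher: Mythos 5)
Your argument is correct and its overall structure parallels the paper's: both reduce item~1 to choosing a hyperplane $\cK$ transverse to the $\omega_\gamma$-direction, use Lemma~\ref{l:pt-locus} to define a height function $\tau$ on $\cK$ whose graph is $\PTg$, and deduce convexity of $\tau$ from the convexity of $\cl(\Sp) = \Sp \cup \PTg$; item~2 is verbatim the paper's argument via the recession cone and lineality space. The genuine difference is in how the hyperplane is produced. The paper picks an explicit continuous linear functional
\begin{equation*}
  \ell(\psi) = \psi(0) - \il \psi \dd \nu
\end{equation*}
for a fixed $\nu \in \sM \setminus \{\delta_0\}$, checks $\ell(\omega_\gamma) > 0$ directly, and proves $\tau > 0$ on $\Ker(\ell)$ by a short computation using the Key Lemma. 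You instead invoke the second geometric form of \textsc{Hahn}--\textsc{Banach} to separate $\{\0\}$ from the closed convex set $\Sp \cup \PTg$, obtain an abstract functional $L$ that is bounded below by a positive constant on that set, deduce $L(\omega_\gamma) > 0$ from Proposition~\ref{p:hook}(2), and then get positivity of $\tau$ for free from $\cK = \Ker L \subseteq \Ip$. Both are valid: the paper's route is more explicit and self-contained (no separation theorem needed, and one sees exactly which hyperplane is used), while your route is more conceptual, showing that the existence of the required $\cK$ is forced by the separation of $\0$ from the stationary closure, and it also yields the (logically unnecessary but pleasant) continuity of $\tau$ via local boundedness of the convex function $t_*$.
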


As mentioned above, for every~$\gamma$ in~$(0, \min \{ \alpha, 1 \}]$ each of the sets~$\Ip$, $\Sp$, and~$\PTg$ is invariant under translations by elements of~$\Cs$.
That is,
\begin{equation}
  \label{eq:71}
  \Ip + \Cs
  =
  \Ip,
  \Sp + \Cs
  =
  \Sp,
  \text{ and }
  \PTg + \Cs
  =
  \PTg.
\end{equation}
Thus, the phase diagram for the intermittent map~$f$ descends naturally to the quotient space~$\Hg / \Cs$.
Proposition~\ref{p:pt-locus-structure}(2) implies that the (convex and unbounded) set
\begin{equation}
  \label{eq:72}
  (\PTg \cup \Sp) / \Cs
\end{equation}
contains no nontrivial affine subspace of ${\Hg / \Cs}$.
Equivalently, the recession cone of this set is salient.

\begin{proof}[Proof of Proposition~\ref{p:pt-locus-structure}]
  Fix~$\nu$ in ${\sM \setminus \{ \delta_0 \}}$ and let ${\ell \: \Hg \to \R}$ be the continuous linear functional defined by
  \begin{equation}
    \label{eq:73}
    \ell(\psi)
    \=
    \psi(0) - \il \psi \dd \nu.
  \end{equation}
  Our hypothesis ${\nu \neq \delta_0}$ implies
  \begin{equation}
    \label{eq:74}
    \ell(\omega_\gamma)
    =
    \il x^{\gamma} \dd \nu(x)
    >
    0.
  \end{equation}
  We can thus define the linear projection ${P \: \Hg \to \Ker(\ell)}$ by
  \begin{equation}
    \label{eq:75}
    P(\varphi)
    \=
    \varphi - \frac{\ell(\varphi)}{\ell(\omega_\gamma)} \omega_\gamma
  \end{equation}
  and a linear map ${\Hg \to \Ker(\ell) \times \R}$ by
  \begin{equation}
    \label{eq:76}
    \varphi \mapsto \left( P(\varphi), \frac{\ell(\varphi)}{\ell(\omega_\gamma)} \right).
  \end{equation}
  This last map is a homeomorphism with inverse ${(\psi, t) \mapsto \psi + t \omega_\gamma}$.

  For each~$\varphi$ in~$\Ker(\ell)$, denote by~$\tau(\varphi)$ the real number~$\tau_0$ given by Lemma~\ref{l:pt-locus} with ${\varphi_0 = \varphi}$.
  This lemma implies that the homeomorphism~\eqref{eq:76} maps ${\PTg \cup \Sp}$ onto the epigraph of the function ${\tau \: \Ker(\ell) \to \R}$ so defined.
  Since ${\PTg \cup \Sp}$ is convex by Proposition~\ref{p:phases}(2,~3), it follows that the function~$\tau$ is convex and therefore continuous.
  By construction, the homeomorphism~\eqref{eq:76} maps~$\PTg$ onto the graph of~$\tau$.
  Hence, to complete the proof of the desired assertion with ${\cK = \Ker(\ell)}$, it only remains to show that the function~$\tau$ takes values in~$(0, +\infty)$.
  To do this, let~$\varphi$ in~$\Ker(\ell)$ be given and recall that ${\varphi + \tau(\varphi) \omega_\gamma}$ is in~$\PTg$ by the definition of~$\tau(\varphi)$.
  Using Theorem~\ref{t:intermittent-potentials}, the hypothesis that~$\varphi$ is in~$\Ker(\ell)$, and the Key Lemma in~\S\ref{ss:lematta}, we obtain
  \begin{equation}
    \label{eq:77}
    P(\varphi + \tau(\varphi) \omega_\gamma)
    =
    (\varphi + \tau(\varphi) \omega_\gamma)(0)
    =
    \varphi(0)
    =
    \il \varphi \dd \nu
    <
    P(\varphi).
  \end{equation}
  Together with ${\omega_\gamma \le 0}$, this implies ${\tau(\varphi) > 0}$.

  To prove item~2, let~$\cA$ be an affine subspace of~$\Hg$ contained in ${\PTg \cup \Sp}$.
  The vector subspace of~$\Hg$ parallel to~$\cA$ is contained in the lineality space of~$\Sc$, which is equal to~$\Cs$ and has infinite co-dimension in~$\Hg$ by Proposition~\ref{p:receding-to-zero-temperature}(2).
  It follows that~$\cA$ also has infinite co-dimension in~$\Hg$.
\end{proof}

\begin{proof}[Proof of Theorem~\ref{t:pt-locus-structure}]
  Let~$\cK$ and~$\tau$ be given by Proposition~\ref{p:pt-locus-structure}(1) and denote by
  \begin{equation}
    \label{eq:78}
    L \: \Hg \to \cK \times \R
  \end{equation}
  the linear homeomorphism given by this result.
  The map ${H \: \cK \times \R \to \cK \times \R}$ defined by
  \begin{equation}
    \label{eq:79}
    H(\varphi_0, h)
    \=
    (\varphi_0, h - \tau(\varphi_0))
  \end{equation}
  is a homeomorphism mapping the graph of~$\tau$ onto~$\cK \times \{ 0 \}$.
  It follows from Proposition~\ref{p:pt-locus-structure}(1) that ${L^{-1} \circ H \circ L}$ is a homeomorphism from~$\Hg$ onto itself mapping~$\PTg$ onto~$\cK$.
  This implies that~$\PTg$ is a submanifold of co-dimension~1 of~$\Hg$.

  Finally, note that if~$\PTg$ were an affine subspace of~$\Hg$, then it would be of co-dimension~1.
  This would contradict Proposition~\ref{p:pt-locus-structure}(2), so~$\PTg$ cannot be an affine subspace of~$\Hg$.
  The proof of the theorem is thus complete.
\end{proof}

\section{On the persistence of phase transitions in temperature}
\label{s:phase-transitions}

This section investigates the persistence of phase transitions in temperature.
In~\S\ref{ss:zero-to-positive}, we relate, for each~$\gamma$ in~$(0, \min\{\alpha, 1\}]$, the stationary phase~$\Sp$ to the interior of the cone~$\Sc$ (Proposition~\ref{p:zero-to-positive}) and prove Theorem~\ref{t:persistence-locus}.
In~\S\ref{ss:nonpersistent-pt}, we state and prove our results in the case where ${\gamma < \alpha}$ (Proposition~\ref{p:nonpersistent-pt}).
In~\S\ref{ss:persistent-phase-transitions}, we give several characterizations of the potentials that undergo a phase transition in temperature in the remaining case, where ${\alpha \le 1}$ and ${\gamma = \alpha}$ (Theorem~\ref{p:persistent-phase-transitions} in~\S\ref{ss:persistent-phase-transitions}).
This characterization is the main ingredient in the proof of Theorem~\ref{t:alpha-rigidity}, which is given in~\S\ref{ss:proof-alpha-rigidity}.

\subsection{From zero to low temperatures}
\label{ss:zero-to-positive}

In this section, we prove Theorem~\ref{t:persistence-locus}.
The main new ingredient is the following proposition.

\begin{prop}
  \label{p:zero-to-positive}
  For every~$\gamma$ in~$(0, \min\{\alpha, 1\}]$, we have
  \begin{equation}
    \label{eq:80}
    \ir(\Sc)
    =
    \bigcup_{\beta \in (0, +\infty)} \beta \Sp.
  \end{equation}
  In particular,
  \begin{equation}
    \label{eq:81}
    \Sp
    \subseteq
    \ir(\Sc)
    \text{ and }
    \PTg
    \subseteq
    \cl(\Sp)
    \subseteq
    \Sc.
  \end{equation}
\end{prop}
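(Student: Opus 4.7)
\medskip

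The plan is to prove the two inclusions in \eqref{eq:80} separately and then deduce the three consequences in \eqref{eq:81}.

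\textbf{The easy inclusion $\bigcup_{\beta>0}\beta\Sp\subseteq\ir(\Sc)$.} I would argue as follows. Suppose $\varphi\in\beta\Sp$, so $\varphi/\beta\in\Sp$. Since $\Sp$ is open (Proposition~\ref{p:phases}), some neighborhood $\cU$ of $\varphi/\beta$ in $\Hg$ lies in $\Sp$. For each $\psi\in\cU$, the measure~$\delta_0$ is a \textsc{Gibbs} state for $\psi$ by Lemma~\ref{l:phases}, so applying Corollary~\ref{c:states} with $\beta_*=1$ shows $\delta_0$ is the unique ground state for $\psi$. Being a ground state is invariant under multiplication by a positive scalar, so $\beta\cU\subseteq\Sc$ is a neighborhood of $\varphi$, and $\varphi\in\ir(\Sc)$.

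\textbf{The hard inclusion $\ir(\Sc)\subseteq\bigcup_{\beta>0}\beta\Sp$.} The idea is to exploit the room inside $\ir(\Sc)$ in the direction $-\omega_\gamma$, combined with a Ruelle-type estimate controlling $h_\nu$ by $\int x^\gamma\,d\nu$. Let $\varphi\in\ir(\Sc)$. Then there exists $\eta>0$ with $\varphi-\eta\omega_\gamma\in\Sc$, which rearranges to the uniform lower bound
\begin{equation*}
  \varphi(0)-\il\varphi\,d\nu\ \ge\ \eta\il x^\gamma\,d\nu\qquad\text{for every }\nu\in\sM.
\end{equation*}
Next, because $\log Df(x)=\log(1+(\alpha+1)x^\alpha)\le(\alpha+1)x^\alpha\le(\alpha+1)x^\gamma$ on $[0,1]$ (using the hypothesis $\gamma\le\alpha$ and $x\le 1$), \textsc{Ruelle}'s inequality (applicable since $Df\ge 1$) gives
\begin{equation*}
  h_\nu\ \le\ \il\log Df\,d\nu\ \le\ (\alpha+1)\il x^\gamma\,d\nu.
\end{equation*}

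\textbf{Combining the two bounds.} Fix any $r>0$ and choose $\tilde\beta>0$ with $\tilde\beta\eta\ge\alpha+1+r$. For $\Delta\in\Hg$ with $\|\Delta\|_\gamma\le r$, the \textsc{H{\"o}lder} bound $|\Delta(x)-\Delta(0)|\le|\Delta|_\gamma x^\gamma\le r x^\gamma$ yields, for every $\nu\in\sM$,
\begin{equation*}
  h_\nu+\il\Delta\,d(\nu-\delta_0)\ \le\ (\alpha+1+r)\il x^\gamma\,d\nu\ \le\ \tilde\beta\eta\il x^\gamma\,d\nu\ \le\ \tilde\beta\Bigl(\varphi(0)-\il\varphi\,d\nu\Bigr).
\end{equation*}
Rearranging gives $h_\nu+\int(\tilde\beta\varphi+\Delta)\,d\nu\le(\tilde\beta\varphi+\Delta)(0)$ for every $\nu\in\sM$, and taking the supremum shows $P(\tilde\beta\varphi+\Delta)\le(\tilde\beta\varphi+\Delta)(0)$. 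The reverse inequality is automatic from the variational principle at~$\delta_0$, so $\delta_0$ is a \textsc{Gibbs} state of~$f$ for every potential in the $\|\cdot\|_\gamma$-ball of radius~$r$ around $\tilde\beta\varphi$. Lemma~\ref{l:phases} then gives $\tilde\beta\varphi\in\Sp$, that is, $\varphi\in(1/\tilde\beta)\Sp$.

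\textbf{The three consequences in \eqref{eq:81}.} The inclusion $\Sp\subseteq\ir(\Sc)$ is the case $\beta=1$ of the main equality. The inclusion $\PTg\subseteq\cl(\Sp)$ follows from $\PTg=\partial\Sp$ (Proposition~\ref{p:phases}(3)). Finally, $\cl(\Sp)\subseteq\Sc$ holds because $\Sp\subseteq\Sc$ (from $\Sp\subseteq\ir(\Sc)$, or directly from Corollary~\ref{c:states}) and $\Sc=\bigcap_{\nu\in\sM}\{\varphi\in\Hg\colon\int\varphi\,d\nu\le\varphi(0)\}$ is closed. The main obstacle is the hard inclusion: securing the linear bound $h_\nu\le C\int x^\gamma\,d\nu$ is what makes the whole scheme work, and it is precisely here that the hypothesis $\gamma\le\alpha$ is used in an essential way.
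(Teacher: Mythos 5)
Your argument is correct, but for the harder inclusion $\ir(\Sc)\subseteq\bigcup_{\beta>0}\beta\Sp$ it takes a genuinely different route from the paper. The paper picks an arbitrary $\varphi_\bullet\in\Sp$, restricts $\Sc$ to the plane spanned by $\varphi$ and $\varphi_\bullet$, writes that planar closed convex cone as an intersection of two half-planes, and uses the strict positivity of the corresponding linear functionals at $\varphi$ (an interior point) to produce a large $t$ with $t\varphi-\varphi_\bullet\in\Sc=\rec(\Sp)$ (Proposition~\ref{p:receding-to-zero-temperature}(1)); then $t\varphi=\varphi_\bullet+(t\varphi-\varphi_\bullet)\in\Sp$. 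That argument is purely convex-geometric and stays inside the paper's machinery. You instead extract from $\varphi\in\ir(\Sc)$ the quantitative spectral-gap bound $\varphi(0)-\il\varphi\,d\nu\ge\eta\il x^\gamma\,d\nu$ (by moving $\eta$ in the direction $-\omega_\gamma$) and couple it with the Ruelle-type estimate $h_\nu\le\il\log Df\,d\nu\le(\alpha+1)\il x^\gamma\,d\nu$, which controls the entropy by the same observable. This lets you verify directly that a whole $\|\cdot\|_\gamma$-ball around $\tilde\beta\varphi$ lies in $\{P=\varphi(0)\}$, so $\tilde\beta\varphi\in\Sp$ via Lemma~\ref{l:phases}. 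Your version produces an explicit scaling factor $\tilde\beta$ and avoids Proposition~\ref{p:receding-to-zero-temperature} entirely, at the cost of invoking the Margulis--Ruelle inequality, which is external to the paper's toolkit (though it is standard for the Pomeau--Manneville map viewed as a $C^1$ circle endomorphism, and it is also implicit in Proposition~\ref{p:geometric-potential}, namely $P(-\log Df)\le 0$, though that appears later in the paper). The easy inclusion and the three consequences in~\eqref{eq:81} match the paper's argument.
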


\begin{proof}
  For all~$\beta$ in~$(0, +\infty)$ and~$\varphi$ in~$\beta \Sp$, we have
  \begin{equation}
    \label{eq:82}
    \beta^{-1} \varphi(0)
    =
    P \left( \beta^{-1} \varphi \right)
    \ge
    \sup_{\nu \in \sM} \il \beta^{-1} \varphi \dd \nu
    \ge
    \beta^{-1} \varphi(0).
  \end{equation}
  This implies that~$\varphi$ is in~$\Sc$ and ${\beta \Sp \subseteq \Sc}$.
  Since~$\Sp$ is open and this holds for every~$\beta$ in~$(0, +\infty)$, we obtain
  \begin{equation}
    \label{eq:83}
    \bigcup_{\beta \in (0, +\infty)} \beta \Sp
    \subseteq
    \ir(\Sc).
  \end{equation}

  To prove the reverse inclusion, let~$\varphi$ in~$\ir(\Sc)$ be given and choose an arbitrary~$\varphi_\bullet$ in~$\Sp$.
  If~$\varphi$ and~$\varphi_\bullet$ are linearly dependent, then there is~$\beta$ in~$(0, +\infty)$ such that~$\varphi$ is in~$\beta \Sp$.
  Suppose~$\varphi$ and~$\varphi_\bullet$ are linearly independent, denote by~$\cP$ the plane in~$\Hg$ generated by~$\varphi$ and~$\varphi_\bullet$, and put
  \begin{equation}
    \label{eq:84}
    \cG
    \=
    \Sc \cap \cP.
  \end{equation}
  The set~$\cG$ is a closed convex cone and therefore there are linear functionals~$\ell$ and~$\ell'$ of~$\cP$, so that
  \begin{equation}
    \label{eq:85}
    \cG
    =
    \{ \psi \in \cP \: \ell(\psi) \ge 0, \ell'(\psi) \ge 0 \}.
  \end{equation}
  Since~$\varphi$ is in the interior of~$\Sc$, we have ${\ell(\varphi) > 0}$ and ${\ell'(\varphi) > 0}$.
  Let~$t$ in~$(0, +\infty)$ be sufficiently large so that
  \begin{equation}
    \label{eq:86}
    t \ell(\varphi)
    \ge
    \ell(\varphi_\bullet)
    \text{ and }
    t \ell'(\varphi)
    \ge
    \ell'(\varphi_\bullet),
  \end{equation}
  and put ${\hvarphi \= t\varphi - \varphi_\bullet}$.
  Then ${\ell(\hvarphi) \ge 0}$, ${\ell'(\hvarphi) \ge 0}$, and~$\hvarphi$ is in~$\cG$ by~\eqref{eq:85}.
  In particular, $\hvarphi$ is in~$\Sc$ and therefore in~$\rec(\Sp)$ by~\eqref{eq:59} in Proposition~\ref{p:receding-to-zero-temperature}(1).
  Since~$\varphi_\bullet$ is in~$\Sp$ and ${t\varphi = \varphi_\bullet + \hvarphi}$, we conclude that~$t \varphi$ is in~$\Sp$.
  This proves that~$\varphi$ is in~$t^{-1} \Sp$ and completes the proof of~\eqref{eq:80}.

  The first inclusion in~\eqref{eq:81} is a direct consequence of~\eqref{eq:80}, the second is a direct consequence of the definition of~$\PTg$, and the last is a direct consequence of the first inclusion.
  The proof of the proposition is thus complete.
\end{proof}

The proof of Theorem~\ref{t:persistence-locus} also relies on the following lemma.

\begin{lemm}
  \label{l:persistence-locus}
  Let~$\gamma$ in~$(0, \min \{\alpha, 1\}]$ and~$\varphi$ in~$\ir(\Sc)$ be given.
  Then~$\varphi$ undergoes a phase transition in temperature and, if we denote by~$\beta_*$ the phase transition parameter of~$\varphi$, then for every~$\beta$ in~$(\beta_*, +\infty)$ the potential~$\beta \varphi$ is in~$\Sp$.
\end{lemm}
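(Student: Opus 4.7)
The plan is to combine four ingredients. I would first apply Proposition~\ref{p:zero-to-positive} to $\varphi \in \ir(\Sc)$ to obtain some $\tbeta \in (0, +\infty)$ with $\tbeta\varphi \in \Sp$. Since $\Sp$ is disjoint from $\Ip$, the potential $\tbeta\varphi$ lies outside $\Ip$, so Corollary~\ref{c:pt-in-temperature} rules out its first alternative and $\varphi$ must undergo a phase transition in temperature. Denote its phase transition parameter by $\beta_*$. Because $\beta\varphi \in \Ip$ for every $\beta \in (0, \beta_*)$ and $\beta_*\varphi \in \PTg$ is disjoint from $\Sp$, one has $\tbeta > \beta_*$.

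To prove $\beta\varphi \in \Sp$ for every $\beta > \beta_*$, I would split into two ranges. For $\beta \geq \tbeta$, the recession cone identity $\Sc = \rec(\Sp)$ from Proposition~\ref{p:receding-to-zero-temperature}(1) applies: since $\varphi \in \Sc$ and $\Sc$ is a cone, $(\beta - \tbeta)\varphi \in \Sc = \rec(\Sp)$, so combining with $\tbeta\varphi \in \Sp$ the decomposition $\beta\varphi = \tbeta\varphi + (\beta - \tbeta)\varphi$ yields $\beta\varphi \in \Sp$.

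For the remaining range $\beta \in (\beta_*, \tbeta)$, the recession cone propagates $\Sp$ only upward, so a convexity argument is needed. I would define $F \: \Hg \to \R$ by letting $F(\varphi_0)$ be the unique real number $\tau_0$ produced by Lemma~\ref{l:pt-locus} applied to $\varphi_0$; by that lemma, $\varphi_0 + \tau \omega_\gamma$ lies in $\Sp \cup \PTg$ if and only if $\tau \geq F(\varphi_0)$. Since $\Sp \cup \PTg = \Hg \setminus \Ip$ is convex by Lemma~\ref{l:phases}, a direct convex-combination argument applied to the points $\varphi_i + F(\varphi_i) \omega_\gamma \in \PTg$ shows that $F$ is convex. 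Evaluating on the ray, $F(\beta_* \varphi) = 0$ (since $\beta_* \varphi \in \PTg$) and $F(\tbeta \varphi) < 0$ (since $\tbeta \varphi \in \Sp$ forces $0 > F(\tbeta \varphi)$). For $\beta = (1 - \lambda)\beta_* + \lambda \tbeta$ with $\lambda \in (0, 1)$, convexity of $F$ then gives $F(\beta \varphi) \leq \lambda F(\tbeta \varphi) < 0$, so $\beta \varphi \in \Sp$.

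The main obstacle is this subrange $(\beta_*, \tbeta)$: one must exclude the possibility that an entire interval of ray parameters lands on $\PTg$, and the recession cone argument alone cannot do this, since it only propagates $\Sp$ upward and gives no downward control. The convexity of $F$, inherited from the convexity of $\Sp \cup \PTg$ established in Section~\ref{ss:phases}, is precisely the ingredient that supplies the missing downward propagation from $\tbeta$ toward $\beta_*$.
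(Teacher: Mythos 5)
Your proof is correct. Both you and the paper handle the range $\beta \ge \tbeta$ in the same way (via $\rec(\Sp) = \Sc$), so the genuine divergence is in how the interval $(\beta_*, \tbeta)$ is treated.

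The paper argues directly with sets: it forms the open set $\cU = \eta_*(\beta_*\varphi) + \eta_0\beta_0^{-1}\Sp$ (with $\eta_*, \eta_0$ chosen so $\cU \ni \beta\varphi$), observes that $\cU \subseteq \cl(\Sp)$ because $\beta_*\varphi \in \partial\Sp$, $\beta_0^{-1}\varphi \in \Sp$, and $\cl(\Sp)$ is convex, and then invokes the fact that $\Sp$ is a \emph{regular} open set (Proposition~\ref{p:phases}(3)) to upgrade the open set $\cU \subseteq \cl(\Sp)$ to $\cU \subseteq \Sp$. You instead scalarize: you promote the threshold map $\varphi_0 \mapsto \tau_0$ of Lemma~\ref{l:pt-locus} to a globally defined function $F \: \Hg \to \R$, prove $F$ is convex by pushing the convexity of $\Hg\setminus\Ip$ through Lemma~\ref{l:pt-locus}, and then run a one-variable convexity estimate on the ray. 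This is legitimate, and your convexity argument for $F$ is essentially the same computation the paper uses later in Proposition~\ref{p:pt-locus-structure}(1) to show that $\tau$ restricted to $\Ker(\ell)$ is convex, so you are not conjuring anything foreign to the paper — you are just deploying it earlier and on all of $\Hg$ rather than on a codimension-one slice.

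What each approach buys: your route avoids any separate appeal to the regular-open-set property of $\Sp$, since the strict inequality $F(\beta\varphi) < 0$ feeds directly back through Lemma~\ref{l:pt-locus} to place $\beta\varphi$ in $\Sp$ rather than merely in $\cl(\Sp)$. It also makes the "downward propagation from $\tbeta$ toward $\beta_*$" transparent as a Jensen-type inequality. The paper's route is shorter in display and does not require introducing an auxiliary function, but it leans on the topological regularity of $\Sp$, which is a slightly heavier structural fact. Both are sound; yours is arguably the more conceptually illuminating of the two, at the cost of an extra definition.
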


\begin{proof}
  By Proposition~\ref{p:zero-to-positive} there is~$\beta_0$ in~$(0, +\infty)$ such that~$\beta_0 \Sp$ contains~$\varphi$.
  It follows that~$\varphi$ undergoes a phase transition in temperature by Corollary~\ref{c:pt-in-temperature} in~\S\ref{ss:pt-locus-structure}.
  Denote by~$\beta_*$ the phase transition parameter of~$\varphi$ and note that~$\beta_* \varphi$ is in~$\partial \Ip$ and that ${\beta_* < \beta_0^{-1}}$ by Corollary~\ref{c:pt-in-temperature} in~\S\ref{ss:pt-locus-structure}.
  It follows that~$\beta_* \varphi$ is in~$\partial \Sp$ by Proposition~\ref{p:phases}(3).
  On the other hand, $\varphi$ is in~$\rec(\Sp)$ by Proposition~\ref{p:receding-to-zero-temperature}(1) and therefore for every~$\beta$ in~$[\beta_0^{-1}, + \infty)$ the potential~$\beta \varphi$ is in~$\Sp$.

  To complete the proof of the lemma, let~$\beta$ in~$(\beta_*, \beta_0^{-1})$ be given and put
  \begin{equation}
    \label{eq:87}
    \eta_*
    \=
    \frac{\beta_0^{-1} - \beta}{\beta_0^{-1} - \beta_*},
    \eta_0
    \=
    \frac{\beta - \beta_*}{\beta_0^{-1} - \beta_*},
    \text{ and }
    \cU
    \=
    \eta_* (\beta_* \varphi) + \eta_0 \beta_0^{-1} \Sp.
  \end{equation}
  Note that~$\eta_*$ and~$\eta_0$ are both in~$(0, 1)$ and that we have
  \begin{equation}
    \label{eq:88}
    \eta_* + \eta_0
    =
    1
    \text{ and }
    \eta_* \beta_* + \eta_0 \beta_0^{-1}
    =
    \beta.
  \end{equation}
  In particular, $\cU$ contains~$\beta \varphi$.
  On the other hand, since~$\beta_* \varphi$ is in~$\partial \Sp$, $\beta_0^{-1} \varphi$ is in~$\Sp$, and~$\cl(\Sp)$ is convex by Proposition~\ref{p:phases}(2), we have ${\cU \subseteq \cl(\Sp)}$.
  But~$\cU$ is open, so ${\cU \subseteq \Sp}$ and therefore~$\beta \varphi$ is in~$\Sp$.
  The proof of the lemma is thus complete.
\end{proof}

\begin{proof}[Proof of Theorem~\ref{t:persistence-locus}]
  The last assertion is a direct consequence of Corollary~\ref{c:pt-in-temperature} in~\S\ref{ss:pt-locus-structure} and Corollary~\ref{c:states} in~\S\ref{ss:lematta}.

  To prove the implication ${1 \Rightarrow 2}$, suppose that~$\varphi$ undergoes a persistent phase transition in temperature in~$\Hg$ and let~$\cU$ be an open neighborhood of~$\varphi$ in~$\Hg$ of potentials undergoing a phase transition in temperature.
  By Corollary~\ref{c:pt-in-temperature} in~\S\ref{ss:pt-locus-structure}, we have ${\cU \subseteq \Sc}$.
  But~$\cU$ is open, so ${\cU \subseteq \ir(\Sc)}$ and therefore~$\varphi$ is in~$\ir(\Sc)$.

  To prove the implication ${2 \Rightarrow 1}$, suppose that~$\varphi$ is in~$\ir(\Sc)$ and let~$\eta$ in~$(0, 1)$ be given.
  By Lemma~\ref{l:persistence-locus} the potential~$\varphi$ undergoes a phase transition in temperature.
  Denote by~$\beta_*$ the phase transition parameter of~$\varphi$ and put ${\varphi_* \= \beta_* \varphi}$.
  Then~$\eta \varphi_*$ is in~$\Ip$ by Corollary~\ref{c:pt-in-temperature} in~\S\ref{ss:pt-locus-structure} and~$\eta^{-1} \varphi_*$ is in~$\Sp$ by Lemma~\ref{l:persistence-locus}.
  Let~$\cU_*$ be a sufficiently small neighborhood of~$\varphi_*$, such that the open neighborhood~$\beta_*^{-1} \cU_*$ of~$\varphi$ is contained in~$\ir(\Sc)$ and we have
  \begin{equation}
    \label{eq:89}
    \eta \cU_*
    \subseteq
    \Ip
    \text{ and }
    \eta^{-1} \cU_*
    \subseteq
    \Sp.
  \end{equation}
  Thus, every~$\tvarphi$ in~$\beta_*^{-1} \cU_*$ undergoes a phase transition in temperature by Lemma~\ref{l:persistence-locus} and satisfies
  \begin{equation}
    \label{eq:90}
    (\eta \beta_*) \tvarphi
    \in
    \Ip
    \text{ and }
    (\eta^{-1} \beta_*) \tvarphi
    \in
    \Sp.
  \end{equation}
  Together with Corollary~\ref{c:pt-in-temperature} in~\S\ref{ss:pt-locus-structure}, this implies that the phase transition parameter~$\tbeta$ of~$\tvarphi$ satisfies
  \begin{equation}
    \label{eq:91}
    \eta \beta_*
    <
    \tbeta
    <
    \eta^{-1} \beta_*.
  \end{equation}
  This proves that~$\varphi$ undergoes a persistent phase transition in temperature in~$\Hg$ and completes the proof the theorem.
\end{proof}

\subsection{Nonpersistent phase transitions in temperature when ${\gamma < \alpha}$}
\label{ss:nonpersistent-pt}

In this section, we state and prove our results for~$\gamma$ in~$(0, 1]$ satisfying ${\gamma < \alpha}$ (Proposition~\ref{p:nonpersistent-pt}).
We begin with the following result, which is also used in the proof of Theorem~\ref{t:alpha-rigidity} in~\S\ref{ss:proof-alpha-rigidity}.

\begin{prop}
  \label{p:pt-comparison}
  For every~$\gamma$ in~$(0, \min \{ \alpha, 1 \}]$, the function~$\ct$ is lower semicontinuous and satisfies
  \begin{equation}
    \label{eq:92}
    \ct^{-1}(1)
    \subseteq
    \PTg
    \text{ and }
    \ct^{-1}(1) \cap \ir(\Sc)
    =
    \PTg \cap \ir(\Sc).
  \end{equation}
\end{prop}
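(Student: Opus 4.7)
The plan is to reduce both parts of the statement to properties of the already-established open set $\Ip$ via positive scaling, since Corollary~\ref{c:pt-in-temperature} couples $\ct(\varphi)$ to the precise location on the ray $(0,+\infty)\cdot\varphi$ at which the intermittent phase terminates.

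First, I would establish lower semicontinuity by verifying the equivalence
\[
\ct(\varphi) \le r \iff r\varphi \notin \Ip,
\]
valid for every $\varphi\in\Hg$ and every $r\in(0,+\infty)$. This drops out immediately from Corollary~\ref{c:pt-in-temperature}: in case~$1$ of that corollary one has $\ct(\varphi)=+\infty$ together with $r\varphi\in\Ip$ for every $r>0$, while in case~$2$ the value $\beta_*=\ct(\varphi)$ is precisely the threshold separating $\{\beta:\beta\varphi\in\Ip\}$ from $\{\beta:\beta\varphi\notin\Ip\}$. Given this equivalence, the sublevel set $\{\ct\le r\}$ is the image of the closed set $\Hg\setminus\Ip$ under the homeomorphism $\psi\mapsto r^{-1}\psi$, hence closed. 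Sublevel sets for $r\le 0$ are empty since $\ct>0$, completing the lower semicontinuity step.

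The inclusion $\ct^{-1}(1)\subseteq\PTg$ is already recorded in~\eqref{eq:162}, so only the equality $\ct^{-1}(1)\cap\ir(\Sc)=\PTg\cap\ir(\Sc)$ requires further work, and for this only the reverse inclusion is at stake. Given $\varphi\in\PTg\cap\ir(\Sc)$, Lemma~\ref{l:persistence-locus} supplies a finite parameter $\beta_*=\ct(\varphi)$ and moreover asserts that $\beta\varphi\in\Sp$ for every $\beta>\beta_*$. Since $\varphi\in\PTg$ lies outside $\Ip$, equation~\eqref{eq:162} forces $\beta_*\le 1$; if $\beta_*<1$, then taking $\beta=1$ in the previous statement would place $\varphi$ in $\Sp$, contradicting $\varphi\in\PTg$. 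Hence $\ct(\varphi)=1$, as desired.

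I do not anticipate a serious obstacle here: each step inherits cleanly from the openness of $\Ip$ (Theorem~\ref{t:intermittent-potentials}) and from the ray structure supplied by Corollary~\ref{c:pt-in-temperature}, with Lemma~\ref{l:persistence-locus} providing the additional content needed on $\ir(\Sc)$. The one point to monitor is the uniform treatment of the value $\ct(\varphi)=+\infty$ in the scaling step, but this is absorbed into case~$1$ of Corollary~\ref{c:pt-in-temperature}, where every positive rescaling $r\varphi$ lies in $\Ip$ and so the equivalence above holds vacuously.
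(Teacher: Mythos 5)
Your argument is correct and takes essentially the same route as the paper: lower semicontinuity follows from the openness of $\Ip$ via Corollary~\ref{c:pt-in-temperature} (you phrase it via closed sublevel sets, the paper via an explicit neighborhood $\cV$ with $\beta\cV\subseteq\Ip$, but these are the same observation), and the equality is deduced from Lemma~\ref{l:persistence-locus} by excluding $\ct(\varphi)>1$ and $\ct(\varphi)<1$ exactly as in the paper's three-case split.
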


\begin{proof}
  To prove that~$\ct$ is lower semicontinuous, let~$\varphi$ in~$\Hg$ and~$\beta$ in~$(0, \ct(\varphi))$ be given.
  By Corollary~\ref{c:pt-in-temperature} in~\S\ref{ss:pt-locus-structure} the potential~$\beta \varphi$ is in~$\Ip$.
  Let~$\cV$ be a neighborhood of~$\varphi$ in~$\Hg$ such that ${\beta \cV \subseteq \Ip}$ and let~$\tvarphi$ be in~$\cV$.
  If~$\tvarphi$ undergoes a phase transition in temperature, then we have ${\ct(\tvarphi) \ge \beta}$ by Corollary~\ref{c:pt-in-temperature} in~\S\ref{ss:pt-locus-structure}.
  Otherwise, ${\ct(\varphi) = +\infty}$.
  In all of the cases, ${\ct(\tvarphi) \ge \beta}$.

  The first inclusion in~\eqref{eq:92} is a direct consequence of Corollary~\ref{c:pt-in-temperature} in~\S\ref{ss:pt-locus-structure}, the definition of~$\ct$, and the equality ${\partial \Ip = \PTg}$.
  The second one is given by~\eqref{eq:81} in Proposition~\ref{p:zero-to-positive}.

  To prove the equality in~\eqref{eq:92}, let~$\varphi$ in~$\ir(\Sc)$ be given.
  If ${\ct(\varphi) = 1}$, then~$\varphi$ is in~$\PTg$ by the first inclusion in~\eqref{eq:92}.
  On the other hand, $\varphi$ is in~$\Ip$ if ${\ct(\varphi) > 1}$ by Corollary~\ref{c:pt-in-temperature} in~\S\ref{ss:pt-locus-structure} and it is in~$\Sp$ if ${\ct(\varphi) < 1}$ by Lemma~\ref{l:persistence-locus}.
  In both of these cases~$\varphi$ is outside~$\PTg$.
\end{proof}

The following proposition gathers our results for ${\gamma < \alpha}$.
Recall that the geometric potential~$-\log Df$ undergoes a phase transition in temperature, and ${\ct(-\log Df) = 1}$, see Proposition~\ref{p:geometric-potential} below.

\begin{prop}
  \label{p:nonpersistent-pt}
  For every~$\gamma$ in~$(0, 1]$ satisfying ${\gamma < \alpha}$, the following properties hold.
  \begin{enumerate}
  \item
    The phase transition in temperature of~$-\log Df$ is nonpersistent in~$\Hg$ and~$\ct$ is discontinuous at~$-\log Df$.
  \item
    The potential~$-\log Df$ is in~$\partial \Sc$ and for each~$\beta$ in~$(0, +\infty)$ the potential~$-\beta \log Df$ is in~$\PTg$ if and only if~$\beta$ is in~$[1, +\infty)$.
  \item
    The phase transition locus~$\PTg$ fails to be a co-dimension~$1$ real-analytic subset of~$\Hg$ at~$-\log Df$.
  \end{enumerate}
  In particular, the set~$\PTg$ is not contained in~$\ir(\Sc)$, the function~$\ct$ is discontinuous, and~$\ct^{-1}(1)$ is strictly contained in~$\PTg$.
\end{prop}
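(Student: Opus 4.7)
The plan is to reduce everything to a single perturbation analysis at the geometric potential: I will show that for every $\beta$ and $\varepsilon$ in $(0, +\infty)$ the potential $-\beta \log Df + \varepsilon x^{\gamma}$ (which lies in $\Hg$ since $\gamma \le 1$) fails to have $\delta_0$ as a ground state of~$f$. Since $-\beta \log Df$ itself is trivially in $\Sc$ (as $\log Df \ge 0$ and vanishes at~$0$), this places $-\beta \log Df$ in $\Sc \setminus \ir(\Sc)$, from which all three items of the proposition will follow.

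For the key estimate, I will test against an $f$-invariant probability measure $\nu_n$ supported on a periodic orbit of period~$n$ obtained as a fixed point of the expanding branch $F|_{I_n} \colon I_n \to J_0$ of the first-return map---such a point~$p_n$ exists for every large~$n$ by the intermediate value theorem. Since $f^k(p_n) \in J_{n-k}$ for $k$ in $\{1, \ldots, n-1\}$, Lemma~\ref{l:neutral-branch} yields
\begin{equation*}
  \il \log Df \dd \nu_n
  =
  \tfrac{1}{n} \log Df^n(p_n)
  =
  O\!\left( \tfrac{\log n}{n} \right)
  \text{ and }
  \il x^{\gamma} \dd \nu_n
  \asymp
  \tfrac{1}{n} \sum_{j = 1}^{n} x_j^{\gamma}
  \asymp
  n^{-\gamma/\alpha},
\end{equation*}
where the last asymptotic uses $x_k^{\gamma} \asymp k^{-\gamma/\alpha}$ and $\gamma < \alpha$. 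Since $n^{-\gamma/\alpha}$ dominates $\log n / n$, the $\nu_n$-integral of $-\beta \log Df + \varepsilon x^{\gamma}$ is strictly positive for $n$ large, whereas its value at~$0$ is~$0$. I expect the careful construction of~$p_n$ and the matching upper and lower bounds on both integrals (uniformly in the relevant parameters) to be the principal technical step.

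Granted this estimate, item~2 unwinds as follows. For $\beta \ge 1$, Proposition~\ref{p:geometric-potential} (which gives $\ct(-\log Df) = 1$) combined with Corollary~\ref{c:pt-in-temperature} yields $-\beta \log Df \notin \Ip$, while Proposition~\ref{p:zero-to-positive} together with the key estimate yields $-\beta \log Df \notin \Sp$; identity~\eqref{eq:6} then forces $-\beta \log Df \in \PTg$. For $\beta < 1$, Corollary~\ref{c:pt-in-temperature} gives $-\beta \log Df \in \Ip$, which is disjoint from $\PTg$. Item~1 is then immediate: nonpersistence of the phase transition at $-\log Df$ follows from $-\log Df \notin \ir(\Sc)$ via Theorem~\ref{t:persistence-locus}; and since each $-\log Df + \varepsilon x^{\gamma}$ lies outside $\Sc$, Corollaries~\ref{c:pt-in-temperature} and~\ref{c:states} together force $\ct(-\log Df + \varepsilon x^{\gamma}) = +\infty$, so $\ct$ jumps from~$1$ to $+\infty$ at $-\log Df$.

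For item~3, I argue by contradiction using the identity principle, in the style of Lemma~\ref{l:identity}: were $\PTg$ a co-dimension~$1$ real-analytic subset of $\Hg$ at $-\log Df$, with local defining function $a \colon \cU \to \R$, then $\beta \mapsto a(-\beta \log Df)$ would be real-analytic in a neighborhood of $\beta = 1$ and vanish on $[1, +\infty)$ by item~2, hence on a full real neighborhood of~$1$; but for $\beta$ slightly below~$1$ this would contradict $-\beta \log Df \in \Ip$, which is disjoint from $\PTg = a^{-1}(0)$. The concluding summary statement is then immediate: $-\log Df$ witnesses $\PTg \not\subseteq \ir(\Sc)$, the discontinuity of $\ct$ has just been shown, and $-2 \log Df$ lies in $\PTg$ by item~2 with $\ct(-2 \log Df) = \tfrac{1}{2}$ (by the scaling $\ct(c\varphi) = \ct(\varphi)/c$ built into Corollary~\ref{c:pt-in-temperature}), witnessing $\ct^{-1}(1) \subsetneq \PTg$.
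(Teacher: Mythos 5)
Your proposal is correct and follows essentially the same route as the paper: the core estimate tests $-\beta\log Df + \varepsilon x^{\gamma}$ against periodic-orbit measures lingering near the neutral fixed point and compares the asymptotics $\frac{1}{n}\log Df^n(p_n) = O(\log n/n)$ against $\frac{1}{n}\sum_j x_j^{\gamma} \asymp n^{-\gamma/\alpha}$, which is exactly what the paper's Lemma~\ref{l:nonpersistent-pt} does (the paper phrases it as a pointwise bound $\tau\omega_{\gamma}-\log Df \ge \varepsilon x^{\gamma}$ near $0$ and sums along the orbit, but the content is the same). The only cosmetic differences are that you treat all $\beta$ at once whereas the paper fixes $\beta=1$ and rescales via $\ct$, and you correctly record the scaling $\ct(c\varphi)=\ct(\varphi)/c$ (so $\ct(-2\log Df)=\tfrac{1}{2}$), consistent with the paper's equation~(\ref{eq:108}).
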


To prove this proposition, we rely on the following well-known properties of the geometric potential.
We provide a proof for completeness, see also \cite{PreSla92} and \cite[Proposition~4.2]{0CorRiv2504a}, as well as \cite[Theorem~4.3]{CliTho13} in the case where~$\alpha$ is in~$(0, 1)$.

\begin{prop}
  \label{p:geometric-potential}
  For every~$\beta$ in~$(0, +\infty)$, we have ${P(-\beta \log Df) \ge 0}$ with equality if and only if~$\beta$ is in~$[1, +\infty)$.
  In particular, $-\log Df$ undergoes a phase transition in temperature and
  \begin{equation}
    \label{eq:93}
    \ct(-\log Df)
    =
    1
  \end{equation}
\end{prop}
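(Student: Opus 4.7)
The plan is to establish (a) $P(-\beta \log Df) \ge 0$ for all $\beta > 0$, (b) $P(-\log Df) = 0$, and (c) $P(-\beta \log Df) > 0$ for all $\beta \in (0, 1)$. Combined with the monotonicity $P(-\beta \log Df) \le P(-\log Df)$ for $\beta \ge 1$, which follows at once from $Df \ge 1$ on $[0,1]$ and the monotonicity of $P$ in the potential, these three claims give the main inequality together with its equality case. The existence of a phase transition in temperature and the value $\ct(-\log Df) = 1$ then follow immediately from Corollary~\ref{c:pt-in-temperature} in~\S\ref{ss:pt-locus-structure}.

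Part (a) is immediate: since $0$ is a neutral fixed point, $Df(0) = 1$ and hence $\il(-\beta \log Df)\, d\delta_0 = 0$; applying the variational principle~\eqref{eq:3} to $\delta_0$, which has $h_{\delta_0} = 0$, yields $P(-\beta \log Df) \ge 0$.

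For (b), I would apply the preimage formula~\eqref{eq:15} of Lemma~\ref{l:pressure} with a base point $x \in J_0$ and decompose each $y \in f^{-n}(x) \cap J_0$ according to its successive first-return times to $J_0$, that is, the unique tuple $(m_1, \ldots, m_\ell)$ with $\sum_k m_k = n$ and $F^{k-1}(y) \in I_{m_k}$. Lemma~\ref{l:neutral-branch} together with the explicit form~\eqref{eq:1} of $f$ then yields $DF \asymp m^{1 + 1/\alpha}$ on $I_m$ with bounded distortion, reducing the analysis of $\sum_{y \in f^{-n}(x) \cap J_0} (Df^n(y))^{-1}$ to iterates of the first-return transfer operator for $-\log Df$ on $J_0$. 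Because Lebesgue measure is conformal for $-\log Df$ on $[0,1]$ (i.e., the transfer operator $\cL_1$ preserves the Lebesgue integral), the induced operator on $J_0$ likewise preserves Lebesgue mass. Using the bounded distortion of $F$ on each branch $I_m$ and the decay $|I_m| \asymp m^{-1-1/\alpha}$ from Lemma~\ref{l:neutral-branch}, one upgrades this $L^1$ identity to a uniform pointwise bound $(\cL_1^n \mathbf{1})(x) \le C$ for all $n$ and $x \in J_0$, which by Lemma~\ref{l:pressure} forces $P(-\log Df) \le 0$. Combined with (a), this gives $P(-\log Df) = 0$, as required.

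For (c), I would use the same first-return decomposition. Since $DF > 1$ almost everywhere on $J_0$, one has $DF^{-\beta} > DF^{-1}$ termwise whenever $\beta < 1$; in particular, the induced ``first-return series'' whose criticality at $z = 1$ witnessed $P(-\log Df) = 0$ becomes, for $\beta < 1$, strictly larger at every $z$. Consequently that series has a root $z_\beta \in (0, 1)$ at which it equals $1$, and the corresponding preimage sum for $f^n$ grows at least like $z_\beta^{-n}$, yielding $P(-\beta \log Df) \ge -\log z_\beta > 0$. The main obstacle is the quantitative passage from the average statement $\il \cL_1 \mathbf{1}\, dx = 1$ (Lebesgue conformality) to the uniform pointwise control of $(\cL_1^n \mathbf{1})(x)$ required in (b); this is a standard but delicate bookkeeping based on Lemma~\ref{l:neutral-branch} and the explicit form of $f$, and once in hand the comparison argument for (c) is straightforward.
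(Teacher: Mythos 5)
Your parts (a) and (b) essentially reproduce the paper's argument: (a) is identical (the integral of $-\beta\log Df$ against $\delta_0$ vanishes since $Df(0)=1$), and (b) rests on the same mechanism of comparing the mass of $n$-th preimages of $J_0$ to Lebesgue measure via the bounded distortion bound of Lemma~\ref{l:geometric-distortion}; the paper works directly with the connected components of $f^{-n}(J_0)$ rather than passing through the first-return map, which makes it shorter, but the content is the same. The real divergence, and the real problem, is in part (c).

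For $\beta<1$ the paper restricts $f$ to the maximal invariant set $K_n\subseteq[x_n,1]$, applies \textsc{Bowen}'s formula on these uniformly expanding, topologically exact subsystems to get $P(f|_{K_n},-\beta\log Df)>0$ whenever $\beta<\HD(K_n)$, and then invokes $\sup_n\HD(K_n)=1$ from the CGDMS machinery of \cite{UrbRoyMun22b}. Your argument instead compares first-return power series. It hinges on the claim that the first-return series for $\beta=1$ has its root at exactly $z=1$ (``criticality at $z=1$''). That claim is not justified and in fact does not follow from what you have: \textsc{Lebesgue} conformality together with bounded distortion only shows that this series evaluated at $z=1$ lies somewhere in $[C_1^{-1},C_1]$, not that it equals $1$. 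Consequently, ``the series becomes strictly larger, hence has a root $z_\beta<1$'' does not follow. The salvageable version of your route would replace the root-of-the-series claim by the exact statement $P_F(-\log DF)=0$ (which does follow from $\cL_F^n\mathbf 1\asymp 1$), then use that $\log DF$ is bounded away from $0$ on the compact set $\overline{J_0}$ to get $P_F(-\beta\log DF)\ge(1-\beta)\inf_{\overline{J_0}}\log DF>0$, and finally pass from induced to original pressure via the \textsc{Abramov}-type relation for the inducing scheme; the last step requires its own justification and is precisely the kind of inducing argument the paper deliberately avoids. As written, your (c) has a genuine gap, whereas the paper's Bowen-formula argument is complete and self-contained.
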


The proof of Proposition~\ref{p:geometric-potential} relies on the the following lemma.

\begin{lemm}[\textcolor{black}{\cite[Lemma~3.2]{0CorRiv2504a}}]
  \label{l:geometric-distortion}
  There are~$\varepsilon_1$ and~$C_1$ in~$(0, +\infty)$, such that the following properties hold.
  For every~$n$ in~$\N$ and every connected component~$J$ of~$f^{-n}(J_0)$, we have
  \begin{equation}
    \label{eq:94}
    |J|
    \le
    \frac{|J_0|}{(1 + \varepsilon_1 n)^{\frac{1}{\alpha} + 1}}
  \end{equation}
  and for all~$x$ and~$y$ in~$J$ we have
  \begin{equation}
    \label{eq:95}
    Df^n(x)
    \ge
    (1 + \varepsilon_1 n)^{\frac{1}{\alpha} + 1}
    \text{ and }
    C_1^{-1}
    \le
    \frac{Df^n(x)}{Df^n(y)}
    \le
    C_1.
  \end{equation}
\end{lemm}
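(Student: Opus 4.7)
The plan is to exploit the countable Markov structure of $f$: the map sends $J_j$ diffeomorphically onto $J_{j-1}$ for each $j \ge 1$, while $f|_{J_0}$ has countably many inverse branches, one per $I_j$. Each connected component $J$ of $f^{-n}(J_0)$ therefore carries a unique itinerary, which I would organize into excursions as follows. Let $0 \le t_0 < t_1 < \cdots < t_\ell = n$ be the times $t$ with $f^t(J) \subseteq J_0$, and set $m_0 \= t_0$ (the length of an initial partial excursion, possibly $0$) and $m_i \= t_i - t_{i-1}$ for $i \ge 1$, so that $n = \sum_{i=0}^\ell m_i$. If $m_0 \ge 1$ then $J \subseteq J_{m_0}$, while each full excursion starts in $J_0 \cap I_{m_i}$ and visits $J_{m_i-1}, J_{m_i-2}, \ldots$ before returning to $J_0$.

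With this decomposition in hand, I would first establish the derivative bound in~\eqref{eq:95} and derive the size bound~\eqref{eq:94}. Lemma~\ref{l:neutral-branch} directly gives $Df^{m_0}(x) \ge (1 + \varepsilon_0 m_0)^{1/\alpha+1}$ on the initial segment (vacuously if $m_0 = 0$). On each full excursion I would factor $Df^{m_i} = Df \cdot Df^{m_i - 1}$, apply Lemma~\ref{l:neutral-branch} to the second factor, and use that $Df$ is bounded below on $J_0$ by $Df(x_1) > 1$; a short comparison then yields some $\varepsilon_1 \in (0, \varepsilon_0]$ such that $Df^{m_i} \ge (1 + \varepsilon_1 m_i)^{1/\alpha+1}$ uniformly in~$m_i$. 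The elementary inequality $\prod_i (1 + a_i) \ge 1 + \sum_i a_i$ then produces $Df^n(x) \ge (1 + \varepsilon_1 n)^{1/\alpha+1}$ on all of~$J$, and the size bound~\eqref{eq:94} follows from $|J_0| = |f^n(J)| \ge |J| \inf_J Df^n$.

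The hard part will be the uniform distortion estimate. Since $Df(x) = 1 + (1+\alpha) x^\alpha$, the function $\log Df$ is Hölder continuous of exponent $\theta \= \min(1, \alpha)$ on each branch of $f$, with some constant $L$, and telescoping gives
\begin{equation*}
  |\log Df^n(x) - \log Df^n(y)| \le L \sum_{k=0}^{n-1} |f^k(J)|^{\theta}.
\end{equation*}
To bound $|f^k(J)|$ I would use that $f^{n-k}$ maps $f^k(J)$ diffeomorphically onto $J_0$: decomposing the remaining expansion into the tail of the current excursion (say with $u$ iterates left before the next visit to $J_0$) plus all subsequent full excursions and applying the derivative bounds above yields
\begin{equation*}
  |f^k(J)| \le |J_0| \, (1 + \varepsilon_0 u)^{-(1/\alpha+1)} \prod_{j > i} (1 + \varepsilon_1 m_j)^{-(1/\alpha+1)},
\end{equation*}
where $i$ is the index of the current excursion. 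Now $\theta(1/\alpha+1) = \min(1+\alpha, 1 + 1/\alpha) > 1$, so the inner sum over $u$ converges uniformly in the excursion length, while the product over later excursions decays geometrically (each factor is bounded by a fixed $q < 1$ since $m_j \ge 1$). Summing over $i$ therefore produces a uniform bound on $\sum_k |f^k(J)|^{\theta}$, independent of $J$ and $n$, and hence a universal distortion constant $C_1$. This double-summation is the main obstacle: it is precisely where the near-neutrality of $f$ at $0$ could in principle destroy bounded distortion, and it is the supercritical exponent $\theta(1/\alpha+1) > 1$ that saves the argument.
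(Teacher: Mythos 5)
The paper does not prove this lemma; it is cited from the companion paper \cite[Lemma~3.2]{0CorRiv2504a}, so there is no in-text proof here to compare against. Your argument is correct and follows the standard route to bounded distortion near a neutral fixed point: decompose the itinerary of~$J$ into excursions from~$J_0$, upgrade Lemma~\ref{l:neutral-branch} to a per-excursion derivative bound $(1+\varepsilon_1 m_i)^{1/\alpha+1}$ by using the strict expansion of~$f$ on~$J_0$ to absorb the ``$-1$'' in passing from $m_i-1$ to $m_i$, multiply across excursions via $\prod_i(1+a_i)\ge 1+\sum_i a_i$ to obtain~\eqref{eq:95} and hence~\eqref{eq:94}, and then control the \textsc{H{\"o}lder} variation of $\log Df$ along the orbit, where the exponent is $\theta = \min \{ 1, \alpha \}$ because $Df(x) = 1+(1+\alpha)x^\alpha$ has the same formula on both branches. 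Your key observation, that $\theta(1/\alpha+1)$ equals $1+\alpha$ when $\alpha\le 1$ and $1+1/\alpha$ when $\alpha>1$ and is therefore always strictly greater than~$1$, is exactly what makes the within-excursion sums over the remaining excursion length~$u$ convergent; combined with the geometric damping supplied by the product over subsequent complete excursions (each factor is at most $(1+\varepsilon_1)^{-\theta(1/\alpha+1)}<1$ since $m_j\ge 1$), the double sum $\sum_{k}|f^k(J)|^\theta$ is bounded uniformly in~$J$ and~$n$, yielding a universal distortion constant~$C_1$. I see no gaps.
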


\begin{proof}[Proof of Proposition~\ref{p:geometric-potential}]
  First note that for every~$\beta$ in~$\R$, we have
  \begin{equation}
    \label{eq:96}
    P(-\beta \log Df)
    \ge
    h_{\delta_0} - \beta \il \log Df \dd \delta_0
    =
    0.
  \end{equation}
  On the other hand, if~$C_1$ is the constant from Lemma~\ref{l:geometric-distortion}, then for every~$n$ in~$\N$ we have
  \begin{equation}
    \label{eq:97}
    1
    \ge
    \sum_{\substack{J \text{ connected} \\ \text{component of } f^{-n}(J_0)}} |J|
    \ge
    C_1^{-1} |J_0| \sum_{x \in f^{-n}(1)} Df^n(x)^{-1}
  \end{equation}
  and therefore
  \begin{equation}
    \label{eq:98}
    \limsup_{n \to \infty} \frac{1}{n} \log \sum_{x \in f^{-n}(1)} Df^n(x)^{-1}
    \le
    0.
  \end{equation}
  By~\eqref{eq:15} in Lemma~\ref{l:pressure} with ${\varphi = -\log Df}$ and ${x = 1}$, the left side of the previous inequality is equal to~$P(-\log Df)$.
  Thus, for every~$\beta$ in~$[1, +\infty)$ we have
  \begin{equation}
    \label{eq:99}
    P(-\beta \log Df)
    \le
    P(-\log Df)
    \le
    0
  \end{equation}
  and therefore ${P(-\beta \log Df) = 0}$.
  To complete the proof of item~1, it remains to prove that for every~$\beta$ in~$(0, 1)$ we have ${P(-\beta \log Df) > 0}$.
  For each~$n$ in~$\N$, denote by~$K_n$ the maximal invariant set of~$f$ on~$[x_n, 1]$.
  The map~$f$ is uniformly expanding and topologically exact on~$K_n$.
  So, by \textsc{Bowen}'s formula, for every~$\beta$ in~$(0, \HD(K_n))$ we have
  \begin{equation}
    \label{eq:100}
    P(-\beta \log Df)
    \ge
    P(f|_{K_n}, -\beta \log Df)
    >
    0,
  \end{equation}
  see, for example, \cite[Theorem~9.1.6 and Corollary~9.1.7]{PrzUrb10}.
  Thus, to complete the proof of item~1 it sufficient to show
  \begin{equation}
    \label{eq:101}
    \sup_{n \in \N} \HD(K_n)
    =
    1.
  \end{equation}
  This is given, for example, by \cite[Theorem~19.6.4]{UrbRoyMun22b} applied to the ``CGDMS'' generated by the first return map of~$f$ to~$J_0$.
\end{proof}

The proof of Proposition~\ref{p:pt-comparison} also relies on the following lemma.
Recall that for~$\gamma$ in~$(0, +\infty)$ we denote by~$\omega_\gamma$ the function in~$\Hg$ given by ${\omega_\gamma(x) = -x^\gamma}$.

\begin{lemm}
  \label{l:nonpersistent-pt}
  For every~$\gamma$ in~$(0, 1]$ satisfying ${\gamma < \alpha}$ and every~$\tau$ in~$(-\infty, 0)$, the potential ${\tau \omega_{\gamma} - \log Df}$ is outside~$\Sc$ and satisfies
  \begin{equation}
    \label{eq:102}
    \ct(\tau \omega_{\gamma} - \log Df)
    =
    +\infty.
  \end{equation}
\end{lemm}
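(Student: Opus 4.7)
The plan is to construct, for each sufficiently large $j$, an $f$-invariant probability measure $\nu_j \in \sM$ supported on a single periodic orbit near the neutral fixed point, and to show that $\il \varphi \dd \nu_j > 0$ where $\varphi \= \tau \omega_{\gamma} - \log Df$. Since $\omega_\gamma(0) = 0$ and $Df(0) = 1$ give $\varphi(0) = 0$, this single strict inequality delivers both conclusions at once. On the one hand, it rules out $\delta_0$ as a ground state for $\varphi$, so $\varphi$ is outside $\Sc$. On the other hand, since $h_{\nu_j} = 0$,
\begin{equation*}
P(\beta \varphi) \ge h_{\nu_j} + \il \beta \varphi \dd \nu_j = \beta \il \varphi \dd \nu_j > 0 = \beta \varphi(0)
\end{equation*}
for every $\beta \in (0, +\infty)$, which by Theorem~\ref{t:intermittent-potentials} places $\beta\varphi$ in $\Ip$ for all such $\beta$, and therefore $\ct(\varphi) = +\infty$ by Corollary~\ref{c:pt-in-temperature} in~\S\ref{ss:pt-locus-structure}.

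For the construction, I will obtain $\nu_j$ from a fixed point of the first return map. Since ${F \colon I_j \to J_0}$ is an orientation-preserving diffeomorphism sending $y_{j+1}^+$ to $x_1$ and $y_j$ to $1$, while $I_j \subset J_0 = (x_1, 1]$ and $y_j < 1$ for $j \ge 2$, the intermediate value theorem produces a fixed point $y_j^* \in I_j$ of $F$. Its $f$-orbit passes through the pairwise disjoint intervals $I_j, J_{j-1}, J_{j-2}, \ldots, J_1$, so it has exact period $j$, and I take $\nu_j$ to be its normalized counting measure.

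The final step is to estimate the two terms of ${\il \varphi \dd \nu_j = - \tau \il x^{\gamma} \dd \nu_j - \il \log Df \dd \nu_j}$ using Lemma~\ref{l:neutral-branch}. Since $f^k(y_j^*) \in J_{j - k}$ for $k = 1, \ldots, j - 1$, hence $f^k(y_j^*) > x_{j - k + 1}$, and since Lemma~\ref{l:neutral-branch} yields $x_m \gtrsim m^{-1/\alpha}$ for $m \in \N$, the first term is asymptotically bounded below by a positive multiple of $-\tau \cdot j^{-\gamma/\alpha}$; here the divergence rate $\sum_{m = 2}^{j} m^{-\gamma/\alpha} \asymp j^{1 - \gamma/\alpha}$ is where the hypothesis $\gamma < \alpha$ enters crucially. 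For the second term, since $f(y_j^*) \in J_{j - 1}$ and $Df$ is bounded on $[0, 1]$, the upper bound of Lemma~\ref{l:neutral-branch} gives
\begin{equation*}
\il \log Df \dd \nu_j = j^{-1} \log Df^j(y_j^*) = O(j^{-1} \log j).
\end{equation*}
Since $j^{-\gamma/\alpha}$ decays strictly slower than $j^{-1} \log j$ and $-\tau > 0$, the desired strict inequality $\il \varphi \dd \nu_j > 0$ holds for all sufficiently large $j$. The main technical step is really just locating the fixed point $y_j^*$ and ensuring the two asymptotic rates separate in the intended direction; both are direct consequences of Lemma~\ref{l:neutral-branch}, and everything else reduces to bookkeeping along the itinerary of the periodic orbit.
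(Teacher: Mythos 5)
Your proposal is correct and takes essentially the same approach as the paper's proof: both evaluate the Birkhoff sum of $\varphi = \tau\omega_\gamma - \log Df$ along the periodic orbit of period $n$ shadowing the neutral fixed point and show it is eventually positive because, when $\gamma < \alpha$, the divergent sum $\sum_m x_m^\gamma \gtrsim \sum_m m^{-\gamma/\alpha}$ dominates the contribution of $-\log Df$. The only cosmetic differences are that the paper collapses the two terms via the pointwise bound $\varphi(x) \ge \varepsilon x^\gamma$ near $0$ (making the $\log Df^n$ estimate unnecessary), works with the unnormalized sum $S_n(\varphi)(p_n) \to +\infty$ rather than the average, and deduces $\ct = +\infty$ by citing Corollary~\ref{c:persitency-locus} instead of reproving $\beta\varphi \in \Ip$ for all $\beta$ directly.
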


\begin{proof}
  Fix~$\tau$ in~$(-\infty, 0)$, put
  \begin{equation}
    \label{eq:103}
    \varphi_0
    \=
    \tau \omega_{\gamma} - \log Df,
  \end{equation}
  and let~$\varepsilon$ in~$(0, -\tau)$ be given.
  By our hypothesis ${\gamma < \alpha}$, there is~$N$ in~$\N$ such that for every~$x$ in~$[0, x_N]$ we have ${\varphi_0(x) \ge \varepsilon x^{\gamma}}$.
  On the other hand, by~\eqref{eq:13} in Lemma~\ref{l:neutral-branch} there is~$C_\bullet$ in~$(0, +\infty)$ such that for every~$i$ in~$\N_0$ we have
  \begin{equation}
    \label{eq:104}
    x_i^\gamma
    \ge
    \frac{C_\bullet}{i^{\frac{\gamma}{\alpha}}}.
  \end{equation}
  Thus, if for a given~$n$ in~$\N$ satisfying ${n \ge N + 1}$ we denote by~$p_n$ the unique periodic point of~$f$ of period~$n$ in~$J_{n - 1}$, then we have
  \begin{equation}
    \label{eq:105}
    S_{n - N}(\varphi_0)(p_n)
    \ge
    \varepsilon \sum_{j = 0}^{n - N - 1} f^j(p_n)^{\gamma}
    \ge
    \varepsilon \sum_{i = N + 1}^n x_i^{\gamma}
    \ge
    \varepsilon C_\bullet \sum_{i = N + 1}^n \frac{1}{i^{\frac{\gamma}{\alpha}}}.
  \end{equation}
  Hence,
  \begin{equation}
    \label{eq:106}
    S_n(\varphi_0)(p_n)
    \ge
    \varepsilon C_\bullet \sum_{i = N + 1}^n \frac{1}{i^{\frac{\gamma}{\alpha}}} + N \inf_{[0, 1]} \varphi_0
    \text{ and }
    \lim_{n \to +\infty} S_n(\varphi_0)(p_n)
    =
    +\infty.
  \end{equation}
  This proves that for every sufficiently large~$n$, we have
  \begin{equation}
    \label{eq:107}
    \frac{1}{n} S_n(\varphi_0)(p_n)
    >
    0
    =
    \varphi_0(0)
  \end{equation}
  and therefore~$\delta_0$ is not a ground state of~$f$ for the potential~$\varphi_0$.
  That is, $\varphi_0$ is outside~$\Sc$.
  Together with Corollary~\ref{c:persitency-locus} in~\S\ref{ss:(non)persistent-pt}, this yields~\eqref{eq:102}.
\end{proof}

\begin{proof}[Proof of Proposition~\ref{p:nonpersistent-pt}]
  Item~1 is a direct consequence of the equality ${\ct(-\log Df) = 1}$ given by Proposition~\ref{p:geometric-potential} and of~\eqref{eq:102} in Lemma~\ref{l:nonpersistent-pt}.

  To prove item~2, note that the equality ${\ct(-\log Df) = 1}$ and Corollary~\ref{c:pt-in-temperature} in~\S\ref{ss:pt-locus-structure} imply that~$-\log Df$ is in~$\Sc$.
  Together with Lemma~\ref{l:nonpersistent-pt}, this implies that~$-\log Df$ is in~$\partial \Sc$.
  To prove the last assertion, note that by the equality ${\ct(-\log Df) = 1}$ and Corollary~\ref{c:pt-in-temperature} in~\S\ref{ss:pt-locus-structure}, for every~$\beta$ in~$(0, 1)$ the potential~$-\beta \log Df$ is in~$\Ip$ and for every~$\beta$ in~$[1, +\infty)$ the potential~$-\beta \log Df$ is outside~$\Ip$.
  On the other hand, Corollaries~\ref{c:pt-in-temperature} in~\S\ref{ss:pt-locus-structure} and~\eqref{eq:102} in Lemma~\ref{l:nonpersistent-pt} imply that for all~$\beta$ in~$(0, +\infty)$ and~$\tau$ in~$(-\infty, 0)$ we have
  \begin{equation}
    \label{eq:108}
    \ct(\tau \omega_{\gamma} - \beta \log Df)
    =
    \beta^{-1} \ct \left( \frac{\tau}{\beta} \omega_{\gamma} - \log Df \right)
    =
    +\infty
  \end{equation}
  and therefore the potential ${\tau \omega_{\gamma} - \beta \log Df}$ is in~$\Ip$ by Corollary~\ref{c:pt-in-temperature} in~\S\ref{ss:pt-locus-structure}.
  Fixing~$\beta$ in~$[1, +\infty)$ and letting ${\tau \to 0}$, we conclude that~$-\beta \log Df$ is in~$\partial \Ip$.
  This last set is equal to~$\PTg$, so~$-\beta \log Df$ is in~$\PTg$.

  To prove item~3, recall that~$\PTg$ is closed by definition and note the function ${\R \mapsto \Hg}$ given by ${\beta \mapsto -\beta \log Df}$ is real-analytic.
  If~$\PTg$ were a co-dimension~$1$ real-analytic subset of~$\Hg$, then combining item~2 with Lemma~\ref{l:identity} would lead to a contradiction.
  This completes the proof of item~3.

  To prove the last assertion of the proposition, note that for every~$\beta$ in~$(1, +\infty)$ the potential~$-\beta \log Df$ is in~$\PTg$ by item~2 and we have
  \begin{equation}
    \label{eq:109}
    \ct(-\beta \log Df)
    =
    \beta \ct(-\log Df)
    =
    \beta
    >
    1
  \end{equation}
  by Proposition~\ref{p:geometric-potential}.
  Together with Proposition~\ref{p:pt-comparison} this proves that~$\ct^{-1}(1)$ is strictly contained in~$\PTg$.
  The remaining assertions are a direct consequence of items~$1$ and~$2$ and of Proposition~\ref{p:pt-comparison}.
\end{proof}

\begin{exam}
  \label{e:non-temperature-pt}
  Fix~$\beta_0$ in~$(1, +\infty)$ and let~$(\tvarphi_\tau)_{\tau \in \R}$ be the real-analytic family of potentials in~$\Hg$, defined by
  \begin{equation}
    \label{eq:110}
    \tvarphi_{\tau}
    \=
    \tau \omega_\gamma - \beta_0 \log Df.
  \end{equation}
  We show that the function ${\tau \mapsto P(\tvarphi_{\tau})}$ is real-analytic on ${\R \setminus \{ 0 \}}$, but fails to be real-analytic at~$0$.
  Since for ${\tau = 0}$ we have
  \begin{equation}
    \label{eq:111}
    \ct(\tvarphi_0)
    =
    \beta_0^{-1} \ct(-\log Df)
    =
    \beta_0^{-1}
    <
    1
  \end{equation}
  by Proposition~\ref{p:geometric-potential}, this provides an example of a phase transition at a potential where no phase transition in temperature takes place.
  Note first that for every~$\tau$ in~$(-\infty, 0)$, we have
  \begin{equation}
    \label{eq:112}
    \ct(\tvarphi_\tau)
    =
    \beta_0^{-1} \ct \left( \frac{\tau}{\beta_0} \omega_{\gamma} - \log Df \right)
    =
    +\infty
  \end{equation}
  by Lemma~\ref{l:nonpersistent-pt} and therefore the potential~$\tvarphi_\tau$ is in~$\Ip$ by Corollary~\ref{c:pt-in-temperature} in~\S\ref{ss:pt-locus-structure}.
  On the other hand, for ${\tau = 0}$ the potential~$\tvarphi_0$ is in~$\PTg$ by Proposition~\ref{p:nonpersistent-pt}(2).
  It follows that for every~$\tau$ in~$(0, +\infty)$ the potential~$\varphi_\tau$ is in~$\Sp$ by Lemma~\ref{l:pt-locus}.
  Combined with Theorem~\ref{t:pt-locus}, this implies that ${\tau \mapsto P(\tvarphi_{\tau})}$ is real-analytic on ${\R \setminus \{ 0 \}}$.
  On the other hand, the function ${\tau \to P(\tvarphi_{\tau})}$ fails to be real-analytic at~$0$ by Lemma~\ref{l:singular-family}.
\end{exam}

\subsection{Persistence of phase transitions in temperature when ${\gamma = \alpha}$}
\label{ss:persistent-phase-transitions}

In this section, we prove the following characterization of potentials undergoing a phase transition in temperature in the case where ${\alpha \le 1}$ and ${\gamma = \alpha}$.
This result is the main ingredient in the proof of Theorem~\ref{t:alpha-rigidity}, which is given in~\S\ref{ss:proof-alpha-rigidity}.

\begin{custtheo}{D'}
  \label{p:persistent-phase-transitions}
  If~$\alpha$ is in~$(0, 1]$, then for every~$\varphi$ in~$\Ha$ the following properties are equivalent.
  \begin{enumerate}
  \item
    The potential~$\varphi$ is in~$\ir(\Sca)$.
  \item
    The potential~$\varphi$ undergoes a phase transition in temperature.
  \item
    The measure~$\delta_0$ is the unique ground state of~$f$ for the potential~$\varphi$ and there are~$C$ and~$\delta$ in~$(0, +\infty)$, such that for every~$n$ in~$\N$ we have
    \begin{equation}
      \label{eq:113}
      \exp(S_n(\varphi)(x_n) - n \varphi (0))
      \le
      \frac{C}{n^{\delta}}.
    \end{equation}
  \item
    There are constants~$\tC$ and~$\tdelta$ in~$(0, +\infty)$, such that the following property holds.
    For every~$\tvarphi$ in~$\Ha$ close to~$\varphi$ and all~$n$ in~$\N$ and~$x$ in~$(0, 1]$ such that~$f^n(x)$ is in~$J_0$, we have
    \begin{equation}
      \label{eq:114}
      \exp(S_n(\tvarphi)(x) - n \tvarphi(0))
      \le
      \frac{\tC}{n^{\tdelta}}.
    \end{equation}
  \end{enumerate}
\end{custtheo}

The proof of this proposition is at the end of this section, after a couple of lemmas.

\begin{lemm}
  \label{l:Holder-distortion}
  If~$\alpha$ is in~$(0, 1]$, then there is~$D$ in~$(0, +\infty)$ such that for every~$\varphi$ in~$\Ha$ the following property holds.
  For every~$n$ in~$\N$, every connected component~$J$ of~$f^{-n}(J_0)$, and all~$x$ and~$x'$ in~$J$, we have
  \begin{equation}
    \label{eq:115}
    |S_n(\varphi)(x) - S_n(\varphi)(x')|
    \le
    D |\varphi|_{\alpha}.
  \end{equation}
\end{lemm}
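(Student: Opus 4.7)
The plan is a term-by-term triangle inequality on the Birkhoff sum combined with the uniform shrinking of preimages provided by Lemma~\ref{l:geometric-distortion}. The hypothesis $\alpha \le 1$ is used in exactly one place, namely to apply the Hölder bound pointwise along the orbit, while the bound $(1/\alpha+1)\alpha = 1+\alpha > 1$ will deliver a convergent series, independent of $\varphi$, $n$, $J$, $x$, $x'$.

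The first step is to observe that whenever $J$ is a connected component of $f^{-n}(J_0)$ and $k \in \{0,\dots,n-1\}$, the image $f^k(J)$ is itself a connected component of $f^{-(n-k)}(J_0)$. This is because the connected components of $f^{-n}(J_0)$ are exactly the sets of the form $g_{\omega}(J_0)$ for $\omega$ a length-$n$ sequence of inverse branches of $f$ (among the two branches $f|_{[0,x_1]}^{-1}$ and $f|_{(x_1,1]}^{-1}$), and forgetting the first $k$ symbols of $\omega$ gives a component of $f^{-(n-k)}(J_0)$. In particular, despite the discontinuity of $f$ at $x_1$, the restriction $f^k|_J$ is a continuous injection and $f^k(J)$ is an interval. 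Applying the size estimate~\eqref{eq:94} of Lemma~\ref{l:geometric-distortion} to the component $f^k(J)$ of $f^{-(n-k)}(J_0)$ yields
\begin{equation*}
|f^k(x) - f^k(x')|
\le
|f^k(J)|
\le
\frac{|J_0|}{(1 + \varepsilon_1 (n-k))^{\frac{1}{\alpha} + 1}}.
\end{equation*}

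Next, since $\alpha \le 1$ and $\varphi \in \Ha$, we may bound
\begin{equation*}
|S_n(\varphi)(x) - S_n(\varphi)(x')|
\le
|\varphi|_\alpha \sum_{k=0}^{n-1} |f^k(x) - f^k(x')|^{\alpha}
\le
|\varphi|_\alpha |J_0|^{\alpha} \sum_{j=1}^{n} \frac{1}{(1 + \varepsilon_1 j)^{1 + \alpha}},
\end{equation*}
where the exponent in the denominator is $\alpha \bigl(\tfrac{1}{\alpha} + 1\bigr) = 1 + \alpha$. Because $1 + \alpha > 1$, the tail series $\sum_{j=1}^{\infty} (1 + \varepsilon_1 j)^{-(1+\alpha)}$ converges, and taking
\begin{equation*}
D \= |J_0|^{\alpha} \sum_{j=1}^{\infty} \frac{1}{(1 + \varepsilon_1 j)^{1 + \alpha}}
\end{equation*}
gives a constant depending only on $\alpha$ (through $\varepsilon_1$ and $|J_0|$) for which~\eqref{eq:115} holds uniformly in $\varphi$, $n$, $J$, $x$, $x'$.

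There is essentially no hard step here; the only point requiring a little care is the first one, namely making sure that iterating $f$ on $J$ does not split it across the discontinuity at $x_1$. That verification is built into the very definition of the connected components of $f^{-n}(J_0)$ as images of composed inverse branches, so once this is spelled out the rest is a one-line summation reducing the lemma to the polynomial-in-$n$ contraction supplied by Lemma~\ref{l:geometric-distortion}.
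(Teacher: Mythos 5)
Your proof is correct and follows the same route as the paper: term-by-term estimation of the Birkhoff sum using the size bound from Lemma~\ref{l:geometric-distortion} applied to $f^j(J)$ (a component of $f^{-(n-j)}(J_0)$), with $\alpha \le 1$ turning the exponent $\bigl(\tfrac{1}{\alpha}+1\bigr)\alpha$ into the summable $1+\alpha$. The paper does not spell out why $f^j(J)$ is such a component; your extra sentence justifying it is a harmless addition, not a divergence.
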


\begin{proof}
  Let~$\varepsilon_1$ be the constant form Lemma~\ref{l:geometric-distortion} and put
  \begin{equation}
    \label{eq:116}
    D
    \=
    |J_0|^\alpha \sum_{k = 1}^{+\infty} \frac{1}{(1 + \varepsilon_1 k)^{1 + \alpha}}.
  \end{equation}

  Let~$n$, $x$, $x'$, and~$J$ be as in the statement of the lemma.
  By Lemma~\ref{l:geometric-distortion}, for every~$j$ in~$\{0, \ldots, n - 1 \}$ we have
  \begin{equation}
    \label{eq:117}
    |f^j(J)|
    \le
    \frac{|J_0|}{(1 + \varepsilon_1 (n - j))^{{\frac{1}{\alpha}} + 1}}.
  \end{equation}
  Hence
  \begin{equation}
    \label{eq:118}
    |S_n(\varphi)(x) - S_n(\varphi)(x')|
    \le
    |\varphi|_{\alpha} \sum_{j = 0}^{n - 1} |f^j(x) - f^j(x')|^\alpha
    \le
    |\varphi|_{\alpha} \sum_{j = 0}^{n - 1} |f^j(J)|^\alpha
    \le
    D |\varphi|_{\alpha}.
    \qedhere
  \end{equation}
\end{proof}

\begin{lemm}
  \label{l:Z_1-criterion}
  Suppose~$\alpha$ is in~$(0, 1]$ and let~$D$ be the constant from Lemma~\ref{l:Holder-distortion}.
  If~$\varphi$ in~$\Ha$ satisfies
  \begin{equation}
    \label{eq:119}
    \sum_{n = 1}^{+\infty} \exp(S_n(\varphi)(x_n) - n \varphi(0))
    >
    \exp(D |\varphi|_\alpha),
  \end{equation}
  then ${P(\varphi) > \varphi(0)}$.
\end{lemm}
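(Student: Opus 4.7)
I will first normalize by subtracting $\varphi(0)$ (which leaves $|\varphi|_\alpha$ and $P(\varphi) - \varphi(0)$ unchanged), reducing to the case $\varphi(0) = 0$; the hypothesis then reads $A(1) > \exp(D|\varphi|_\alpha)$ where $A(z) := \sum_{n \ge 1} a_n z^n$ and $a_n := \exp(S_n\varphi(x_n))$, and it suffices to show $P(\varphi) > 0$. The route is to lower-bound the preimage partition sum from Lemma~\ref{l:pressure} at $x = 1 \in J_0$:
\begin{equation*}
P(\varphi) = \lim_{N \to \infty} \frac{1}{N} \log \sum_{y \in f^{-N}(1)} \exp(S_N \varphi(y)).
\end{equation*}

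Next, I will organize the preimages of $1$ that lie in $J_0$ by the first-return pattern for $F \colon J_0 \to J_0$. For each composition $(n_1, \ldots, n_k)$ of $N$ with $n_j \ge 1$, there is a unique connected component $W_{(n_1, \ldots, n_k)}$ of $f^{-N}(J_0)$ inside $J_0$ on which $f^N$ maps diffeomorphically onto $J_0$, and a unique preimage $z \in W_{(n_1, \ldots, n_k)}$ of $1$ characterized by $F^{j-1}(z) \in I_{n_j}$ for $j = 1, \ldots, k-1$ and $F^{k-1}(z) = y_{n_k}$. The terminal subchain $y_{n_k} \to x_{n_k - 1} \to \cdots \to x_1 \to 1$ already matches the chain generating $S_{n_k}\varphi(x_{n_k})$ up to the single boundary discrepancy $\varphi(y_{n_k}) - \varphi(x_{n_k})$, and an analogous matching persists on each preceding block.

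The key estimate I aim for is
\begin{equation*}
\exp(S_N \varphi(z)) \ge e^{-D|\varphi|_\alpha} \prod_{j=1}^k a_{n_j},
\end{equation*}
obtained by applying Lemma~\ref{l:Holder-distortion} to $W_{(n_1, \ldots, n_k)}$ \emph{as a single component} of $f^{-N}(J_0)$---which gives one global distortion bound $D|\varphi|_\alpha$ rather than the compounding bound $kD|\varphi|_\alpha$ produced by applying the lemma separately to each $I_{n_j}$---and then comparing $S_N \varphi(z)$ with $\sum_{j=1}^k S_{n_j}\varphi(x_{n_j})$ via a reference orbit in $\overline{W_{(n_1, \ldots, n_k)}}$ that tracks the concatenation of $x_{n_j}$-chains. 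Summing over all compositions of $N$ yields
\begin{equation*}
\sum_{y \in f^{-N}(1)} \exp(S_N \varphi(y)) \ge e^{-D|\varphi|_\alpha} \sum_{k \ge 1} \sum_{\substack{n_1, \ldots, n_k \ge 1 \\ n_1 + \cdots + n_k = N}} \prod_{j=1}^k a_{n_j},
\end{equation*}
whose right-hand side is the coefficient of $z^N$ in $e^{-D|\varphi|_\alpha} A(z)/(1 - A(z))$. Since $A(1) > e^{D|\varphi|_\alpha} > 1$ and $A$ is continuous and increasing on $[0, 1)$, there is $z_0 \in (0, 1)$ with $A(z_0) = 1$, so the coefficients of $A(z)/(1-A(z))$ grow at exponential rate $-\log z_0 > 0$; combined with Lemma~\ref{l:pressure}, this yields $P(\varphi) \ge -\log z_0 > 0$.

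The hard part will be the distortion bookkeeping in the key estimate. A naive per-block application of Lemma~\ref{l:Holder-distortion} on each first-return factor $I_{n_j}$ produces a compounding loss $e^{-kD|\varphi|_\alpha}$ together with boundary corrections $|\varphi(y_{n_j}) - \varphi(x_{n_j})| \le |\varphi|_\alpha$ per block, and would only permit the conclusion under a strictly stronger hypothesis. To match the stated constant $D$, I will use that the distortion constant of Lemma~\ref{l:Holder-distortion} is independent of the length of the component, apply it directly to the composite $W_{(n_1, \ldots, n_k)}$, and arrange the reference orbit so that all per-block mismatches are absorbed uniformly into the single global factor $e^{-D|\varphi|_\alpha}$.
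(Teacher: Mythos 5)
There is a genuine gap in the key estimate, and the motivating claim about the per-block approach is a misdiagnosis.

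The crucial step you need is $\exp(S_N\varphi(z)) \ge e^{-D|\varphi|_\alpha}\prod_{j=1}^k a_{n_j}$, which you propose to obtain from a single application of Lemma~\ref{l:Holder-distortion} to the composite component $W_{(n_1,\ldots,n_k)}$ together with a ``reference orbit in $\overline{W_{(n_1,\ldots,n_k)}}$ that tracks the concatenation of $x_{n_j}$-chains.'' No such reference orbit exists. The quantity $\sum_{j=1}^k S_{n_j}(\varphi)(x_{n_j})$ is not the Birkhoff sum $S_N(\varphi)(z^*)$ of any point $z^*$: the concatenation of the chains $x_{n_j}, x_{n_j - 1}, \ldots, x_1$ is not an $f$-orbit, since $f(x_1) = 1$, not $x_{n_{j+1}}$. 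Concretely, any $z^* \in \overline{W_{(n_1,\ldots,n_k)}}$ has $F^{j-1}(z^*) \in \overline{I_{n_j}} \subset \overline{J_0}$ at the start of each block, whereas the reference chain starts at $x_{n_j} \in J_{n_j}$; these lie in different elements of the partition $\{J_m\}_{m \ge 0}$, so the orbit of $z^*$ and the concatenated reference chain disagree in at least $k$ places and their Birkhoff sums differ by a quantity of order $k|\varphi|_\alpha$, not $D|\varphi|_\alpha$. Lemma~\ref{l:Holder-distortion} only compares $S_N(\varphi)$ at two points of the \emph{same} component of $f^{-N}(J_0)$, so it cannot be made to produce your estimate, with or without a clever choice of $z^*$.

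Your stated reason for avoiding the per-block approach --- that the compounding loss $e^{-kD|\varphi|_\alpha}$ ``would only permit the conclusion under a strictly stronger hypothesis'' --- is also mistaken, and this is precisely where the paper's proof lives. The paper applies Lemma~\ref{l:Holder-distortion} once per return block, accepts the compounding loss, and encodes it in the auxiliary series $\Phi(s) := e^{-D|\varphi|_\alpha}\sum_{n \ge 1} a_n s^n$: the coefficients of $\Phi(s)^k$ are exactly $e^{-kD|\varphi|_\alpha}\sum_{n_1 + \cdots + n_k = N}\prod_j a_{n_j}$, so the partition-function series $\Xi(s)$ dominates a constant times $\sum_{k\ge1}\Phi(s)^k$. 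The condition for $\sum_k \Phi(s)^k$ to have radius of convergence strictly less than $1$ is $\Phi(1) > 1$, that is, $\sum_n a_n > e^{D|\varphi|_\alpha}$ --- exactly the hypothesis \eqref{eq:119}. In other words, the constant $e^{D|\varphi|_\alpha}$ in the hypothesis is calibrated to the per-block compounding, not to a would-be global distortion bound; the per-block route gives the lemma as stated. (Your generating function $e^{-D|\varphi|_\alpha}A(z)/(1-A(z))$, with its pole at $A(z_0) = 1$, would only need $A(1) > 1$, a strictly weaker hypothesis; the fact that your argument appears to prove more than what is claimed is a symptom of the missing step.) To repair the proof you should follow the per-block scheme: bound $S_{m_j}(\varphi)(F^j(y))$ below by $S_{m_j}(\varphi)(x_{m_j}) - D|\varphi|_\alpha$ for each $j$ separately using the component $I_{m_j}$, multiply, and then use the generating function $\Phi(s)/(1-\Phi(s))$ rather than $e^{-D|\varphi|_\alpha}A(s)/(1-A(s))$.
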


\begin{proof}
  Consider the formal power series~$\Xi$ and~$\Phi$ in the variable~$s$, defined by
  \begin{align}
    \label{eq:120}
    \Xi(s)
    & \=
      \sum_{n = 1}^{+\infty} \sum_{x \in f^{-n}(1)} \exp(S_n(\varphi)(x) - n \varphi(0)) s^n
      \intertext{and}
      \Phi(s)
    & \=
      \exp(-D |\varphi|_\alpha) \sum_{n = 1}^{+\infty} \exp(S_n(\varphi)(x_n) - n \varphi(0)) s^n.
  \end{align}
  By~\eqref{eq:15} in Lemma~\ref{l:pressure}, the convergence radius~$R$ of~$\Xi(s)$ satisfies
  \begin{equation}
    \label{eq:121}
    R
    =
    \exp(\varphi(0) - P(\varphi)).
  \end{equation}
  To prove that~$R$ is strictly less than~$1$, note that~$f|_{J_0}^{-1}$ induces a bijection between the iterated preimages of~$1$ by~$f$ and the iterated preimages of~$1$ by~$F$.
  Furthermore, for every~$n$ in~$\N$ and~$x$ in~$f^{-n}(1)$ we have the following estimate.
  Put ${y \= f|_{J_0}^{-1}(x)}$, let~$k$ in~$\N$ be such that
  \begin{equation}
    \label{eq:122}
    \sum_{j = 0}^{k - 1} m(F^j(y))
    =
    n + 1,
  \end{equation}
  and note that ${F^k(y) = 1}$.
  Applying Lemma~\ref{l:Holder-distortion} repeatedly, we obtain
  \begin{equation}
    \label{eq:123}
    S_n(\varphi)(x)
    \ge
    S_{n + 1}(\varphi)(y) - \sup_{J_0} \varphi
    \ge
    \sum_{j = 0}^{k - 1} (S_{m(F^j(y))}(\varphi)(x_{m(F^j(y))}) - D |\varphi|_{\alpha}) - \sup_{J_0} \varphi.
  \end{equation}
  We conclude that the power series in~$s$
  \begin{equation}
    \label{eq:124}
    \exp \left( \varphi(0) - \sup_{J_0} \varphi \right) (\Phi(s) + \Phi(s)^2 + \cdots)
  \end{equation}
  has coefficients smaller than or equal to the corresponding coefficients of~$\Xi(s)$.
  In particular, the convergence radius~$\tR$ of~\eqref{eq:124} satisfies ${R \le \tR}$.
  On the other hand, our hypothesis~\eqref{eq:119} implies that evaluating~$\Phi(s)$ at ${s = 1}$, we obtain ${\Phi(1) > 1}$.
  It thus follows that there is~$s_0$ in~$(0, 1)$, such that~$\Phi(s_0)$ is finite and ${\Phi(s_0) \ge 1}$.
  Together with~\eqref{eq:121}, this implies
  \begin{equation}
    \label{eq:125}
    \exp(\varphi(0) - P(\varphi))
    =
    R
    \le
    \tR
    \le
    s_0
    <
    1
    \text{ and }
    P(\varphi)
    >
    \varphi(0).
    \qedhere
  \end{equation}
\end{proof}

\begin{proof}[Proof of Theorem~\ref{p:persistent-phase-transitions}]
  The implication 1~$\Rightarrow$~2 is a direct consequence of Theorem~\ref{t:persistence-locus}.
  The proofs of the implications 2~$\Rightarrow$~3, 3~$\Rightarrow$~4, and 4~$\Rightarrow$~1 are below.
  Let~$D$ be the constant from Lemma~\ref{l:Holder-distortion} and note that by~\eqref{eq:13} in Lemma~\ref{l:neutral-branch}, there is~$C_\bullet$ in~$(0, +\infty)$ such that for every~$i$ in~$\N_0$ we have
  \begin{equation}
    \label{eq:126}
    x_i^\alpha
    \ge
    \frac{C_\bullet}{i}.
  \end{equation}
  Given a continuous potential ${\psi \: [0, 1] \to \R}$ and~$n$ in~$\N$, write
  \begin{equation}
    \label{eq:127}
    \zeta_n(\psi)
    \=
    \exp(S_n(\psi)(x_n) - n \psi(0)).
  \end{equation}

  \partn{2 $\Rightarrow$ 3}
  Suppose~$\varphi$ undergoes a phase transition in temperature.
  Then there is~$\beta$ in~$(0, +\infty)$ such that ${P(\beta \varphi) = \beta \varphi(0)}$ (Corollary~\ref{c:pt-in-temperature} in~\S\ref{ss:pt-locus-structure}) and~$\delta_0$ is the unique ground state of~$f$ for the potential~$\varphi$ (Corollary~\ref{c:states} in~\S\ref{ss:lematta}).
  So, by Lemma~\ref{l:Z_1-criterion} with~$\varphi$ replaced by~$\beta \varphi$ we have
  \begin{equation}
    \label{eq:128}
    \sum_{n = 1}^{+\infty} \zeta_n(\varphi)^\beta
    \le
    \exp(D \beta |\varphi|_\alpha).
  \end{equation}
  On the other hand, by~\eqref{eq:26} with~$\gamma = \alpha$ and ${\psi = \varphi}$ and by~\eqref{eq:126}, for all~$k$ and~$\ell$ in~$\N$ we have
  \begin{equation}
    \label{eq:129}
    S_{\ell}(\varphi)(x_{k + \ell}) - \ell \varphi(0)
    \ge
    -|\varphi|_\alpha \sum_{j = 0}^{\ell - 1} x_{k + \ell - j}^{\alpha}
    \ge
    - |\varphi|_\alpha C_\bullet \sum_{i = k + 1}^{k + \ell} \frac{1}{i}
    \ge
    - |\varphi|_\alpha C_\bullet \log \left( \frac{k + \ell}{k} \right).
  \end{equation}
  Assuming in addition ${\ell \le k}$, we obtain
  \begin{equation}
    \label{eq:130}
    \zeta_{k + \ell}(\varphi)
    \ge
    \exp(-(C_\bullet \log 2)|\varphi|_\alpha) \zeta_k(\varphi).
  \end{equation}
  Combined with~\eqref{eq:128}, this yields
  \begin{equation}
    \label{eq:131}
    \exp( \beta D |\varphi|_\alpha )
    \ge
    \sum_{n = 1}^{+\infty} \zeta_n(\varphi)^\beta
    \ge
    \sum_{n = k + 1}^{2k} \zeta_n(\varphi)^\beta
    \ge
    k \exp(-(\beta C_\bullet \log 2)|\varphi|_\alpha) \zeta_k(\varphi)^\beta
  \end{equation}
  and therefore
  \begin{equation}
    \label{eq:132}
    \zeta_k(\varphi)
    \le
    \frac{\exp((D + C_\bullet \log 2) |\varphi|_\alpha)}{k^{\frac{1}{\beta}}}.
  \end{equation}
  This completes the proof of item~3.

  \partn{3 $\Rightarrow$ 4}
  Suppose item~3 holds, put
  \begin{equation}
    \label{eq:133}
    \tdelta
    \=
    \frac{\delta}{3},
    \tD
    \=
    \exp\left( D(|\varphi|_\alpha + C_\bullet^{-1} \tdelta)) \right),
  \end{equation}
  and let~$m_0$ in~$\N$ be sufficiently large so that for every~$m$ in~$\N$ satisfying ${m \ge m_0}$ we have
  \begin{equation}
    \label{eq:134}
    \exp \left( |\varphi|_\alpha + C_\bullet^{-1} \tdelta \right) \tD \frac{C \exp(\tdelta)}{(m - 1)^{2\tdelta}}
    \le
    \frac{\tD^{-1}}{(m + 1)^{\tdelta}}.
  \end{equation}
  Furthermore, put
  \begin{equation}
    \label{eq:135}
    \iota
    \=
    \sup \left\{ \il \varphi \dd \nu \: \nu \in \sM, \supp(\nu) \subseteq [x_{m_0}, 1] \right\}
    \text{ and }
    \kappa
    \=
    \exp(\iota - \varphi(0)).
  \end{equation}
  The supremum above is attained, so our hypothesis that~$\delta_0$ is the unique ground state of~$f$ for the potential~$\varphi$ implies
  \begin{equation}
    \label{eq:136}
    \iota
    <
    \varphi(0)
    \text{ and }
    \kappa
    <
    1.
  \end{equation}
  Choose~$\tkappa$ in~$(\kappa, 1)$ and let~$\tK$ in~$(0, +\infty)$ be sufficiently large, so that for every~$m$ in~$\N$ we have
  \begin{equation}
    \label{eq:137}
    \tkappa^m
    \le
    \frac{\tK}{(m + 1)^{\tdelta}}.
  \end{equation}

  Let~$\Delta$ in~$\Ha$ be such that
  \begin{equation}
    \label{eq:138}
    |\Delta|_{\alpha}
    <
    \min \left\{ \log \frac{\tkappa}{\kappa}, C_\bullet^{-1} \tdelta \right\}
  \end{equation}
  and put ${\tvarphi \= \varphi + \Delta}$.
  Note that for every measure~$\nu$ in~$\sM$ satisfying ${\supp(\nu) \subseteq [x_{m_0}, 1]}$, we have
  \begin{equation}
    \label{eq:139}
    \exp \left( \il \tvarphi \dd \nu - \tvarphi(0) \right)
    \le
    \kappa \exp \left( \il \Delta \dd \nu - \Delta(0) \right)
    \le
    \tkappa.
  \end{equation}
  On the other hand, by the definition of~$\tD$ in~\eqref{eq:133} we have
  \begin{equation}
    \label{eq:140}
    \exp(D |\tvarphi|_\alpha)
    \le
    \tD.
  \end{equation}

  Before proving~\eqref{eq:114}, we establish a couple of preliminary estimates.
  Let~$x'$ in~$J_0$ and~$k$ in~$\N$ be such that for every~$\ell$ in~$\{0, \ldots, k - 1\}$ we have ${m(F^\ell(x')) \le m_0}$.
  Put ${n' \= \sum_{\ell = 0}^{k - 1} m(F^\ell(x'))}$ and note that ${f^{n'}(x') = F^k(x')}$.
  Then, for every~$i$ in~$\left\{0, \ldots, n' - 1 \right\}$ the connected component~$J$ of~$f^{-n'}(J_0)$ containing~$x'$ satisfies
  \begin{equation}
    \label{eq:141}
    f^i(J)
    \subseteq
    \bigcup_{m = 0}^{m_0 - 1} J_m
    =
    (x_{m_0}, 1].
  \end{equation}
  In particular, if~$p$ is the unique periodic point of~$f$ of period~$n'$ in~$J$, then the orbit of~$p$ is contained in~$(x_{m_0}, 1]$.
  Thus, by Lemma~\ref{l:Holder-distortion}, \eqref{eq:139}, and~\eqref{eq:140} we have
  \begin{equation}
    \label{eq:142}
    \exp(S_{n'}(\tvarphi)(x') - n' \tvarphi(0))
    \le
    \exp(D |\tvarphi|_\alpha) \exp(S_{n'}(\tvarphi) (p) - n' \tvarphi(0))
    \le
    \tD \tkappa^{n'}.
  \end{equation}
  To establish the second preliminary estimate, note that by~\eqref{eq:126} for every~$m$ in~$\N$ we have
  \begin{equation}
    \label{eq:143}
    S_m(\omega_\alpha)(x_m)
    =
    \sum_{j = 0}^{m - 1} x_{m - j}^{\alpha}
    \ge
    -C_\bullet \sum_{i = 1}^m \frac{1}{i}
    \ge
    -C_\bullet (1 + \log m).
  \end{equation}
  Together with~\eqref{eq:26} with ${\gamma = \alpha}$ and ${\psi = \Delta}$ and with~\eqref{eq:113} and~\eqref{eq:138}, this implies
  \begin{equation}
    \label{eq:144}
    \zeta_m(\tvarphi)
    =
    \zeta_m(\varphi) \zeta_m(\Delta)
    \le
    \frac{C}{m^{\delta}} \exp( - |\Delta|_\alpha S_m(\omega_\alpha)(x_m))
    \le
    \frac{C \exp(C_\bullet |\Delta|_\alpha)}{m^{\delta - C_\bullet |\Delta|_\alpha}}
    <
    \frac{C \exp(\tdelta)}{m^{2 \tdelta}}.
  \end{equation}

  We now proceed to the proof of~\eqref{eq:114}.
  Let~$n$ in~$\N$ and~$x$ in~$(0, 1]$ be such that~$f^n(x)$ is in~$J_0$.
  Suppose first that~$x$ is in~$J_0$.
  Let~$k$ in~$\N$ be such that ${\sum_{\ell = 0}^{k - 1} m(F^\ell(x)) = n}$, note that ${F^k(x) = f^n(x)}$, and for each~$\ell$ in~$\{0, \ldots, k - 1\}$ put
  \begin{equation}
    \label{eq:145}
    z_\ell
    \=
    F^{\ell}(x)
    \text{ and }
    m_\ell
    \=
    m(z_\ell).
  \end{equation}
  If for every~$\ell$ in~$\{0, 1, \ldots k - 1\}$ we have ${m_\ell \le m_0}$, then by~\eqref{eq:137} and~\eqref{eq:142} we have
  \begin{equation}
    \label{eq:146}
    \exp(S_n(\tvarphi)(x) - n \tvarphi(0))
    \le
    \tD \tkappa^n
    \le
    \tD \frac{\tK}{(n + 1)^{\tdelta}}.
  \end{equation}
  Suppose there is~$\ell$ in~$\{0, 1, \ldots k - 1\}$ satisfying ${m(F^\ell(x)) \ge m_0 + 1}$.
  Let~$(\ell_j)_{j = 1}^s$ be the increasing sequence of all such.
  Then, for each~$j$ in~$\{1, \ldots, s\}$ the point~$f(z_{\ell_j})$ is in~$J_{m_{\ell_j} - 1}$ and therefore by Lemma~\ref{l:Holder-distortion}, \eqref{eq:134} with ${m = m_{\ell_j}}$, \eqref{eq:138}, \eqref{eq:140}, and~\eqref{eq:144} with ${m = m_{\ell_j} - 1}$, we have
  \begin{multline}
    \label{eq:147}
    \exp(S_{m_{\ell_j}}(\tvarphi)(z_{\ell_j}) - m_{\ell_j} \tvarphi (0))
    \le
    \exp \left( \tvarphi(z_{\ell_j}) - \tvarphi(0) \right) \tD \zeta_{m_{\ell_j} - 1}(\tvarphi)
    \\ \le
    \exp \left( |\tvarphi|_\alpha \right) \tD \frac{C \exp(\tdelta)}{(m_{\ell_j} - 1)^{2\tdelta}}
    \le
    \frac{\tD^{-1}}{(m_{\ell_j} + 1)^{\tdelta}}.
  \end{multline}
  If ${s = k}$, then for every~$\ell$ in~$\{0, \ldots, k - 1\}$ we have ${m_\ell \ge m_0 + 1}$ and
  \begin{multline}
    \label{eq:148}
    \exp(S_n(\tvarphi)(x) - n \tvarphi (0))
    =
    \prod_{\ell = 0}^{k - 1} \exp(S_{m_\ell}(\tvarphi)(z_\ell) - m_\ell \tvarphi (0))
    \le
    \tD^{-k} \prod_{\ell = 0}^{k - 1} \frac{1}{(m_\ell + 1)^{\tdelta}}
    \\ \le
    \tD^{-k} \frac{1}{(n + 1)^{\tdelta}}.
  \end{multline}
  Suppose ${s < k}$, put ${\ell_0 \= -1}$ and ${\ell_{s + 1} \= k}$, and let~$(j(i))_{i = 1}^t$ be the increasing sequence of all those~$j$ in~$\{0, \ldots, s\}$ such that ${\ell_{j + 1} \ge \ell_j + 2}$.
  For each~$i$ in~$\{1, \ldots, t\}$ put
  \begin{equation}
    \label{eq:149}
    M_i
    \=
    \sum_{\ell = \ell_{j(i)} + 1}^{\ell_{j(i) + 1} - 1} m_\ell
  \end{equation}
  and note that for every~$\ell$ in~${\{\ell_{j(i)} + 1, \ldots, \ell_{j(i) + 1} - 1\}}$ we have ${m_\ell \le m_0}$.
  Furthermore,
  \begin{equation}
    \label{eq:150}
    t \le s + 1
    \text{ and }
    \sum_{i = 1}^t M_i
    =
    n - \sum_{j = 1}^s m_{\ell_j}.
  \end{equation}
  Thus, by~\eqref{eq:142} for every~$i$ in~$\{1, \ldots, t\}$ we have
  \begin{equation}
    \label{eq:151}
    \exp(S_{M_i}(\tvarphi)(z_{\ell_{j(i) + 1}}) - M_i \tvarphi (0))
    \le
    \tD \tkappa^{M_i}.
  \end{equation}
  Together with~\eqref{eq:137}, \eqref{eq:147}, and~\eqref{eq:150}, this implies
  \begin{multline}
    \label{eq:152}
    \exp(S_n(\tvarphi)(x) - n \tvarphi (0))
    \\
    \begin{aligned}
      & =
        \prod_{j = 1}^s \exp(S_{m_{\ell_i}}(\tvarphi) (z_{\ell_i}) - m_{\ell_i} \tvarphi (0))
        \cdot \prod_{i = 1}^t \exp(S_{M_i}(\tvarphi)(z_{\ell_{j(i) + 1}}) - M_i \tvarphi (0))
      \\ & \le
           \tD^{t - s} \left( \prod_{j = 1}^s \frac{1}{(m_{\ell_j} + 1)^{\tdelta}} \right) \tkappa^{\sum_{i = 1}^t M_i}
      \\ & \le
           \tD \tK \frac{1}{\left( \sum_{j = 1}^s m_{\ell_j} + 1 \right)^{\tdelta}} \cdot \frac{1}{\left( \sum_{i = 1}^s M_i + 1 \right)^{\tdelta}}
      \\ & \le
           \tD \tK \frac{1}{(n + 1)^{\tdelta}}.
    \end{aligned}
  \end{multline}
  Together with~\eqref{eq:146} and~\eqref{eq:148} this completes the proof that in the case where~$x$ is in~$J_0$, we have
  \begin{equation}
    \label{eq:153}
    \exp(S_n(\tvarphi)(x) - n \tvarphi (0))
    \le
    \frac{\max \{\tD \tK, \tD^{-1} \}}{(n + 1)^{\tdelta}}.
  \end{equation}
  To treat the general case, put
  \begin{equation}
    \label{eq:154}
    \hC
    \=
    \tD C \exp(\tdelta) \max \{\tD \tK, \tD^{-1} \},
  \end{equation}
  suppose~$x$ is not~$J_0$, and let~$m$ in~$\N$ be such that~$J_m$ contains~$x$.
  Then ${m \le n}$, $f^m(x)$ is in~$J_0$, and by Lemma~\ref{l:Holder-distortion} with~$n$ replaced by~$m$ and with ${x' = x_m}$, \eqref{eq:140}, and~\eqref{eq:144} we have
  \begin{equation}
    \label{eq:155}
    \exp(S_m(\tvarphi)(x) - m \tvarphi (0))
    \le
    \tD \zeta_m(\tvarphi)
    \le
    \frac{\tD C \exp(\tdelta)}{m^{2 \tdelta}}.
  \end{equation}
  In the case where ${n > m}$, by~\eqref{eq:153} with~$n$ replaced by~$n - m$ and~$x$ replaced by~$f^m(x)$ this implies
  \begin{multline}
    \label{eq:156}
    \exp(S_n(\tvarphi)(x) - n \tvarphi (0))
    \le
    \frac{\tD C \exp(\tdelta)}{m^{2 \tdelta}} \exp(S_{n - m}(\tvarphi)(f^m(x)) - (n - m) \tvarphi (0))
    \\ \le
    \frac{\hC}{m^{2 \tdelta} \cdot (n - m + 1)^{\tdelta}}
    \le
    \frac{\hC}{n^{\tdelta}}.
  \end{multline}
  Together with~\eqref{eq:153} and~\eqref{eq:155}, this completes the proof of item~4 in all of the cases.

  \partn{4 $\Rightarrow$ 1}
  Suppose there are~$\tC$ and~$\tdelta$ in~$(0, +\infty)$ such that~\eqref{eq:114} holds for every~$\tvarphi$ in~$\Ha$ close to~$\varphi$.
  Let~$\nu$ be an ergodic measure in~$\sM$ different from~$\delta_0$.
  Then, $\nu$ charges~$J_0$ and by the \textsc{Birkhoff} ergodic theorem there is~$x_\bullet$ in~$J_0$ such that
  \begin{equation}
    \label{eq:157}
    \lim_{n \to +\infty} \frac{1}{n} S_n(\tvarphi)(x_\bullet)
    =
    \il \tvarphi \dd \nu
  \end{equation}
  and such that there are arbitrarily large~$n$ for which~$f^n(x_\bullet)$ is in~$J_0$.
  For every such~$n$, we have by~\eqref{eq:114} with ${x = x_\bullet}$
  \begin{equation}
    \label{eq:158}
    S_n(\tvarphi)(x_\bullet) - n \tvarphi(0)
    \le
    -\tdelta \log n + \log \tC.
  \end{equation}
  Together with~\eqref{eq:157}, this implies
  \begin{equation}
    \label{eq:159}
    \il \tvarphi \dd \nu
    \le
    \tvarphi(0).
  \end{equation}
  That is, $\delta_0$ is a ground state of~$f$ for the potential~$\tvarphi$ and therefore~$\tvarphi$ is in~$\Sca$.
  This proves that~$\Sca$ contains a neighborhood of~$\varphi$ in~$\Ha$ and therefore~$\varphi$ is in~$\ir(\Sca)$.
\end{proof}

\subsection{Proof of Theorem~\ref{t:alpha-rigidity}}
\label{ss:proof-alpha-rigidity}
That every phase transition in temperature is persistent follows from the equivalence ${1 \Leftrightarrow 2}$ in Theorem~\ref{p:persistent-phase-transitions} in~\S\ref{ss:persistent-phase-transitions}.

To prove~\eqref{eq:9}, let~$\varphi$ in~$\PTa$ be given.
Then ${P(\varphi) = \varphi(0)}$ by Theorem~\ref{t:intermittent-potentials} and~$\varphi$ undergoes a phase transition in temperature by Corollary~\ref{c:pt-in-temperature} in~\S\ref{ss:pt-locus-structure}.
This phase transition in temperature is persistent in~$\Ha$, so Theorem~\ref{t:persistence-locus} implies that~$\varphi$ is in~$\ir(\Sca)$.
This proves ${\PTa \subseteq \ir(\Sca)}$.
Together with Proposition~\ref{p:pt-comparison}, this inclusion implies
\begin{equation}
  \label{eq:160}
  \PTa
  =
  \PTa \cap \ir(\Sca)
  =
  \{ \varphi \in \ir(\Sca) \: \cta(\varphi) = 1 \}.
\end{equation}
Thus, to complete the proof of~\eqref{eq:9}, it is sufficient to show that ${\cta = +\infty}$ on ${\Ha \setminus \ir(\Sca)}$.
To do this, let~$\varphi$ be in ${\Ha \setminus \ir(\Sca)}$ and note that for every~$\beta$ in~$(0, +\infty)$ the potential~$\beta \varphi$ is outside~$\Spa$ by Proposition~\ref{p:zero-to-positive} and it is outside~$\PTa$ because ${\PTa \subseteq \ir(\Sca)}$.
Hence~$\beta \varphi$ is in~$\Ip$.
Since this holds for every~$\beta$ in~$(0, +\infty)$, by Corollary~\ref{c:pt-in-temperature} in~\S\ref{ss:pt-locus-structure} the potential~$\varphi$ does not undergo a phase transition in temperature and we have ${\cta(\varphi) = +\infty}$ by definition.
The proof of~\eqref{eq:9} is thus complete.

It remains to show that~$\cta$ is continuous.
Since ${\cta = +\infty}$ on ${\Ha \setminus \ir(\Sca)}$, this follows from Corollary~\ref{c:persitency-locus} in~\S\ref{ss:(non)persistent-pt} and the lower semicontinuity of~$\cta$ given by Proposition~\ref{p:pt-comparison}.


\bibliographystyle{alpha}

\end{document}